\newcommand{\Rr}{\enm{\cal{R}}}

\newcommand{\Tt}{\enm{\cal{T}}}

\newcommand{\Vv}{\enm{\cal{V}}}

\renewcommand{\phi}{\varphi}
\renewcommand{\theta}{\vartheta}
\documentclass[12pt,oneside,reqno]{amsart}
 
\usepackage[all]{xy}                        %
\usepackage[hyperindex=true, colorlinks=true, linkcolor=blue, filecolor=black,
citecolor=blue, urlcolor=blue, bookmarks=true, pagebackref=true,
pdftitle={Deformations of the logarithmic tangent bundle of smooth projective surfaces}]{hyperref}

\usepackage[margin=1in]{geometry}
\usepackage[table]{xcolor}
\usepackage{colortbl, array, booktabs}

\definecolor{lightred}{rgb}{1, 0.85, 0.85}
\definecolor{lightblue}{rgb}{0.85, 0.9, 1}
\definecolor{darkline}{rgb}{0.4, 0.4, 0.4} 
\usepackage{verbatim}
\usepackage[numbers]{natbib}
\CompileMatrices                            

\UseTips                                    

\usepackage[bookmarks=true]{hyperref}       
\hypersetup{
    colorlinks=true,    
    linkcolor=blue,     
    citecolor=blue,     
    filecolor=blue,     
    urlcolor=blue       
}
\usepackage{amssymb,latexsym,amsmath,amscd}
\numberwithin{equation}{section}
\usepackage{xspace}
\usepackage{color}
\usepackage{graphicx}
\usepackage{caption}
\usepackage{mathtools}
\usepackage{enumitem}
\usepackage{tikz}
\usetikzlibrary{cd}
\usepackage[utf8]{inputenc}
\usepackage{fourier}
\usepackage{array}
\usepackage{makecell}
\usepackage{colortbl}
\usepackage{bm}
\usepackage{array}
\usepackage{subcaption}
\usepackage{longtable}
\usepackage[table]{xcolor}
\usepackage{multirow}
\usetikzlibrary{arrows.meta,positioning,calc}
\usepackage[most]{tcolorbox}
\usepackage[colorlinks=true,linkcolor=blue,citecolor=blue,urlcolor=blue]{hyperref}

\definecolor{lightred}{RGB}{255,220,220}
\definecolor{lightblue}{RGB}{220,235,255}

\newcolumntype{L}[1]{>{\raggedright\let\newline\\\arraybackslash\hspace{0pt}}m{#1}}
\newcolumntype{C}[1]{>{\centering\let\newline\\\arraybackslash\hspace{0pt}}m{#1}}
\newcolumntype{R}[1]{>{\raggedleft\let\newline\\\arraybackslash\hspace{0pt}}m{#1}}
\definecolor{lightred}{rgb}{1.0,0.9,0.9}
\definecolor{lightblue}{rgb}{0.9,0.9,1.0}

\reversemarginpar

\theoremstyle{plain}
\newtheorem{theorem}{Theorem}[section]
\newtheorem*{theorem*}{Theorem}
\newtheorem{proposition}[theorem]{Proposition}
\newtheorem{corollary}[theorem]{Corollary}
\newtheorem{lemma}[theorem]{Lemma}

\newtheorem{problem}[theorem]{Problem}
\newtheorem{claim}{Claim}[theorem]

\newenvironment{claimproof}[1][Proof of claim]{\begin{proof}[#1]}{\end{proof}}

\theoremstyle{definition}
\newtheorem{definition}[theorem]{Definition}

\newtheorem{remark}[theorem]{Remark}

\definecolor{lightred}{RGB}{255,225,225}
\definecolor{lightblue}{RGB}{222,238,255}
\definecolor{procedureblue}{RGB}{42,88,140}
\newtcolorbox{wildnessprocedure}[1][]{
  enhanced,
  breakable,
  colback=procedureblue!3,
  colframe=procedureblue,
  colbacktitle=procedureblue,
  coltitle=white,
  fonttitle=\bfseries,
  title={Finite procedure for proving reflexive toric wildness},
  boxrule=0.8pt,
  arc=2mm,
  left=3mm,
  right=3mm,
  top=2mm,
  bottom=2mm,
  before skip=10pt,
  after skip=10pt,
  attach boxed title to top left={xshift=4mm,yshift*=-\tcboxedtitleheight/2},
  boxed title style={arc=1.5mm,boxrule=0pt},
  #1
}

\newcommand{\enm}[1]{\ensuremath{#1}}          %
\newcommand{\op}[1]{\operatorname{#1}}
\newcommand{\cal}[1]{\mathcal{#1}}

\renewcommand{\bar}[1]{\overline{#1}}

\newcommand{\BB}{\enm{\mathbb{B}}}
\newcommand{\CC}{\enm{\mathbb{C}}}

\newcommand{\FF}{\enm{\mathbb{F}}}

\renewcommand{\AA}{\enm{\mathbb{A}}}
\newcommand{\PP}{\enm{\mathbb{P}}}

\newcommand{\Aa}{\enm{\cal{A}}}

\newcommand{\Ee}{\enm{\cal{E}}}
\newcommand{\Ff}{\enm{\cal{F}}}

\newcommand{\Ii}{\enm{\cal{I}}}

\newcommand{\Ll}{\enm{\cal{L}}}
\newcommand{\Mm}{\enm{\cal{M}}}
\newcommand{\Nn}{\enm{\cal{N}}}
\newcommand{\Oo}{\enm{\cal{O}}}
\renewcommand{\epsilon}{\varepsilon}

\newcommand{\Pic}{\op{Pic}}

\newcommand{\Hom}{\op{Hom}}

\newcommand{\Ext}{\op{Ext}}

\newcommand{\End}{\op{End}}
\newcommand{\Aut}{\op{Aut}}

\renewcommand{\to}[1][]{\xrightarrow{\ #1\ }}

\newcommand{\old}[1]{}

\makeatletter
\NewCommandCopy\@@pmod\pmod
\DeclareRobustCommand{\pmod}{\@ifstar\@pmods\@@pmod}
\def\@pmods#1{\mkern4mu({\operator@font mod}\mkern 6mu#1)}
\makeatother

\begin{document}
\renewcommand{\arraystretch}{1.4}
\arrayrulecolor{darkline}

\title[Deformations of the logarithmic tangent bundle of smooth projective surfaces]{Deformations of the logarithmic tangent bundle \\of smooth projective surfaces}

\author{Sukmoon Huh and Min-Gyo Jeong }

\address{Sungkyunkwan University, Suwon 440-746, Korea}
\email{sukmoonh@skku.edu}

\address{Sungkyunkwan University, Suwon 440-746, Korea}
\email{nida08@skku.edu}

\thanks{SH is supported by the National Research Foundation of Korea(NRF) grant funded by the Korea government(MSIT) (No. RS-2023-00208874). MJ is supported by Basic Science Research Program through the National Research Foundation of Korea(NRF) funded by the Ministry of Education (No. RS-2019-NR040081)}


\keywords{logarithmic tangent bundle, deformation of vector bundles,
infinitesimal rigidity, smooth projective surface, blow-up}

\subjclass[2020]{14J60, 14D20, 14E15}

\begin{abstract}
We study fixed-determinant deformations of the logarithmic tangent bundle associated with a smooth irreducible divisor on a smooth complex projective surface. We derive numerical criteria for the existence of positive-dimensional deformation families. Using the Enriques--Kodaira classification and Miyaoka-type inequalities, we classify the minimal surface pairs whose logarithmic tangent bundle is infinitesimally rigid. We also prove monotonicity under compatible blow-ups and compute local comparison formulas for resolutions of low-multiplicity ordinary singularities of the boundary.
\end{abstract}

\maketitle
\tableofcontents

\section{Introduction}

Let $X$ be a smooth complex projective surface and let $D\subset X$ be a nonempty divisor with  normal crossings. The logarithmic tangent bundle
\[
\Tt_X(-\log D)=\Omega_X^1(\log D)^\vee,
\]
defined as the dual of the logarithmic cotangent bundle, consists of vector fields tangent to $D$ and records the infinitesimal geometry of the pair $(X,D)$. The study of the logarithmic cotangent bundle was initiated by Deligne \cite{De} with the aim of defining a mixed Hodge structure on the complement $X\backslash D$. Later on, it was generalized by Saito to arbitrary reduced divisors in \cite{Saito}. Although it is canonically determined by the pair, it may admit deformations as a vector bundle on the fixed surface $X$. Subsequent work treated logarithmic tangent bundles as geometric vector bundles, particularly in connection with stability and Torelli problems; see \cite{DK,Valles, Angelini, BHM, UY, HMPV}.

The purpose of this paper is to determine when such deformations exist and, in particular, to classify the pairs for which the logarithmic tangent bundle is infinitesimally rigid with fixed determinant, especially when $D$ is irreducible and smooth. Deformation theory of the ordinary tangent bundle $\Tt_X$ arose in work motivated partly by vector bundles on Calabi–Yau manifolds; an early example is \cite{Huybrechts}. Peternell later formulated the systematic problem of determining when $\Tt_X$ has a genuine deformation, meaning one not obtained merely by tensoring with a varying line bundle; see \cite{Peternell}.

First-order deformations of $\Tt_X(-\log D)$ are parametrized by the first cohomology of its endomorphism bundle. The trace decomposition separates the scalar directions, which arise from tensoring by deformations of the trivial line bundle, from the trace-free directions. We therefore call the logarithmic tangent bundle \emph{genuinely infinitesimally rigid} when
\[
\mathrm{H}^1\bigl(X,\mathcal{E}\!nd_0(\Tt_X(-\log D))\bigr)=0.
\]
This is also the vanishing of the tangent space to its fixed-determinant deformation functor. We distinguish throughout between the existence of a nonzero first-order class and the stronger assertion that such a class belongs to a positive-dimensional deformation family. In this article, a positive-dimensional deformation means a positive-dimensional component through $[\Ee]$ of a fixed-determinant Kuranishi space of $\Ee$. When we assert that a family is nonisotrivial, we mean additionally that its nearby members are not all mutually isomorphic. The latter is obtained from the Kuranishi space whenever the dimension of the tangent space exceeds that of the obstruction space.

Our main result gives a complete classification on minimal surfaces.

\begin{theorem}\label{minimal-surface-pairs}
Let $X$ be a minimal smooth projective surface and let $D\subset X$ be a nonempty smooth irreducible divisor. Then $\Tt_X(-\log D)$ is genuinely infinitesimally rigid if and only if $(X,D)$ is isomorphic to one of the following.
\begin{itemize}
\item $\bigl( \PP^2, L \bigr)$ with a line $L$;
\item $\bigl(\PP^2, C\bigr)$ with a smooth conic $C$;
\item $\bigl(\FF_e, C_0\bigr)$ with $e\geq 2$,
\end{itemize}
where $C_0$ is the section of self-intersection $-e$. In every other case the logarithmic tangent bundle has a positive-dimensional fixed-determinant deformation.
\end{theorem}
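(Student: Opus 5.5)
Write $\Ee=\Tt_X(-\log D)$, a rank-two bundle with $c_1(\Ee)=-(K_X+D)$ and $c_2(\Ee)=c_2(X)+D\cdot(D+K_X)=e(X)-e(D)$. Since $\Ee$ has rank two, $\mathcal{E}\!nd_0(\Ee)\cong S^2\Ee\otimes\det\Ee^{-1}$. By Riemann--Roch,
\[
\chi(\mathcal{E}\!nd_0\Ee)=4c_2(\Ee)-c_1(\Ee)^2+3\chi(\Oo_X)=4c_2(\Ee)-(K_X+D)^2+3\chi(\Oo_X).
\]
Serre duality gives $h^2(\mathcal{E}\!nd_0\Ee)=h^0(\mathcal{E}\!nd_0\Ee\otimes K_X)$. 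So
\[
h^1=h^0(\mathcal{E}\!nd_0\Ee)+h^0(\mathcal{E}\!nd_0\Ee\otimes K_X)-\chi,
\]
and $-\chi$ is the expected dimension of the fixed-determinant Kuranishi space. The proof has two halves, handled separately.

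\textbf{Rigidity of the three listed pairs.} I will compute directly.
\begin{itemize}
\item For $(\PP^2,L)$, $\Ee\cong\Oo(1)\oplus\Oo$. Hence $\mathcal{E}\!nd_0\Ee=\Oo^{\oplus}\oplus\Oo(1)\oplus\Oo(-1)$, and all of its $H^1$ vanish.
\item For $(\PP^2,C)$, $\Ee$ is the stable bundle with $c_1=-1$, $c_2=1$ (up to twist, $\Omega(1)$ twisted). Such a bundle is $T_{\PP^2}(-2)$, whose trace-free endomorphisms have $H^1=0$ by Bott vanishing: $h^1(\mathcal{E}\!nd\,T)=0$ since $T$ is rigid.
\item For $(\FF_e,C_0)$, I use the ruling $\pi:\FF_e\to\PP^1$. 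The exact sequence $0\to\Tt_{X/\PP^1}(-C_0)\to\Ee\to\pi^*T_{\PP^1}\to0$ is available, since $C_0$ is a section transversal to the fibres, so vertical vector fields vanish along $C_0$. Here $\Tt_{X/\PP^1}=\Oo(2C_0+ef)$, so $\Ee$ is an extension of $\Oo(2f)$ by $\Oo(C_0+ef)$. I will compute $H^1$ of $\mathcal{E}\!nd_0$ by filtering it with the line bundles $\Oo(C_0+(e-2)f)$, $\Oo$ and $\Oo(-C_0-(e-2)f)$. The last one has $H^1=H^1(\PP^1,\pi_*)$ computed by pushforward, where $\pi_*\Oo(-C_0)=0$ and $R^1=0$, so it vanishes. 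The other pieces also have vanishing $H^1$ when $e\ge 2$. The $e=0,1$ cases fail because $\FF_0,\FF_1$ behave differently: $\FF_1$ is not minimal, and on $\FF_0$ one checks a nonzero class.
\end{itemize}

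\textbf{Nonrigidity in all other cases.} It suffices to show $\dim H^1>\dim H^2$ for $\mathcal{E}\!nd_0$, i.e.\ $-\chi+h^0(\mathcal{E}\!nd_0\Ee)>0$, so that the Kuranishi space is positive dimensional. I will run through the Enriques--Kodaira classes.
\begin{itemize}
\item \emph{Kodaira dimension $\ge0$.} $K_X$ is nef. I expect $h^2=h^0(\mathcal{E}\!nd_0\Ee\otimes K_X)$ to be controlled using the inclusions $\Ee\subset\Tt_X$ and $\Ee^\vee\subset\Omega_X(\log D)$. Logarithmic Miyaoka--Yau/Bogomolov, $c_1^2\le3c_2$ for log-general type and its analogues, gives $4c_2-c_1^2$ large relative to $3\chi(\Oo_X)$. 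The difficult subcase is bounding $h^0(S^2\Ee\otimes\det\Ee^{-1}\otimes K_X)$; I would first try semistability of $\Ee$ with respect to $K_X+D$, or destabilizing foliations, to reduce to $h^0\le$ something small.
\item \emph{Rational minimal surfaces.} These are $\PP^2$ and $\FF_e$ ($e\ne1$), where the computation is explicit. On $\PP^2$ with a curve of degree $d\ge3$, $-\chi=4(3-(2-2g))\cdot$\dots, which grows with $d$. On $\FF_e$ I classify $D$ by class $aC_0+bf$ and get $-\chi>0$ except for $D=C_0$ or a fibre. A fibre is impossible for rigidity since then $\Ee$ splits along the ruling, giving extension classes. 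I would need to check the fibre case explicitly.
\item \emph{Ruled surfaces over curves of genus $\ge1$.} Here $h^1(\Oo_X)>0$ feeds directly into nonrigidity through the trace-free part of pullbacks.
\end{itemize}

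The main obstacle is the uniform bound on $h^2$ in Kodaira dimension $\ge0$ and in the non-rational ruled cases. There I would rely on stability of $\Ee$ to get $h^0(\mathcal{E}\!nd_0\Ee\otimes K_X)$ small, and on Miyaoka-type inequalities to make $-\chi$ dominate.
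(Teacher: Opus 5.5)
Your overall architecture matches the paper: explicit computation for the three model pairs, then negativity of $\chi$ across the Enriques--Kodaira classes. Both halves, however, have a real gap.

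\textbf{Rigidity of $(\FF_e,C_0)$.} Your filtration argument rests on the claim that the graded piece $\Oo_{\FF_e}(C_0+(e-2)f)$ has vanishing $\mathrm{H}^1$, and this is false. Pushing forward gives $\mathrm{H}^1(\FF_e,\Oo_{\FF_e}(C_0+(e-2)f))\cong\mathrm{H}^1(\PP^1,\Oo_{\PP^1}(e-2)\oplus\Oo_{\PP^1}(-2))\cong\CC$ for every $e\geq1$. This group is exactly $\Ext^1(\Oo_{\FF_e}(2f),\Oo_{\FF_e}(C_0+ef))$, where the class $\xi$ of your extension lives. If the extension split, you would get $\mathrm{h}^1(\mathcal{E}\!nd_0(\Ee))=1$, i.e.\ a nonrigid bundle.

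The missing input is that $\xi\neq0$. Push out along $\Tt_{\FF_e/\PP^1}(-C_0)\hookrightarrow\Tt_{\FF_e/\PP^1}$: a splitting of your sequence would split the ordinary relative tangent sequence. That would give $\FF_e$ a holomorphic connection over $\PP^1$, which is impossible for $e>0$.

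Granting $\xi\neq0$, let $\mathcal{B}\subset\mathcal{E}\!nd_0(\Ee)$ be the subsheaf of endomorphisms preserving $\Oo(C_0+ef)$. The connecting map $\mathrm{H}^0(\Oo)\to\mathrm{H}^1(\Oo(C_0+(e-2)f))$ sends $1$ to a nonzero multiple of $\xi$, so $\mathrm{H}^1(\mathcal{B})=0$. The top quotient $\Oo(-C_0-(e-2)f)$ is acyclic, so $\mathrm{H}^1(\mathcal{E}\!nd_0(\Ee))=0$. The paper reaches the same conclusion differently: it computes $\mathrm{h}^0=e-1$, gets $\mathrm{h}^2=0$ by restricting to fibres, and uses $\chi=e-1$.

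A few smaller slips do not change these conclusions:
\begin{itemize}
\item Your Riemann--Roch sign is reversed. The correct formula is $\chi=3\chi(\Oo_X)+c_1^2-4c_2$, which gives $0$, not $6$, for the conic.
\item For a line, $\Ee\cong\Oo_{\PP^2}(1)^{\oplus2}$, since $c_1=2H$.
\item Fibres of $\FF_e$ are not exceptional: they have $\chi=-1$.
\end{itemize}

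\textbf{Nonrigidity.} Since $\mathrm{h}^1-\mathrm{h}^2=\mathrm{h}^0-\chi$ exactly, no bound on $\mathrm{h}^2$ is ever needed. Using stability to make $\mathrm{h}^0(\mathcal{E}\!nd_0(\Ee))$ small pushes in the wrong direction. The real difficulty is the pairs with $\chi\geq0$, which your sketch never isolates:
\begin{itemize}
\item On an irrational ruled surface $f\colon X\to C$, a fibre has $\chi=5g-1>0$, and a section $C_0+bF$ has $\chi=g-1+e-2b$, which can be nonnegative.
\item An \'etale multisection over an elliptic base has $\chi=0$.
\item An elliptic $D$ with $D^2=0$ on an abelian or bielliptic surface has $\chi=0$.
\item A vertical elliptic $D$ on a $\kappa=1$ elliptic quasi-bundle has $\chi=0$.
\end{itemize}
Each of these needs its own argument:
\begin{itemize}
\item In the $\chi=0$ cases you must exhibit a nonzero trace-free endomorphism. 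The paper does this by proving that $\Ee$ \emph{splits} into line bundles, via \'etale covers and invariant tangent directions. This is the opposite of stability.
\item Sections need the nilpotent endomorphisms $\Ee\to f^*\Tt_C\to\Tt_{X/C}(-D)\to\Ee$, together with $\mathrm{h}^2=0$ from restriction to fibres.
\item Fibres need an explicit nonisotrivial fixed-determinant family. One moves the fibre and twists by $f^*Q_u$ with $Q_u^{\otimes2}\cong\Oo_C(p(u)-p_0)$.
\end{itemize}
Your claim that $\mathrm{h}^1(\Oo_X)>0$ feeds nonrigidity directly is wrong. $\mathrm{H}^1(\Oo_X)$ is precisely the trace summand that is excluded from genuine deformations.

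Finally, logarithmic BMY requires $K_X+D$ to be nef and big. That fails for a rational $D$ on a surface of general type, and BMY says nothing about negative multisections when $\kappa=1$. These cases need other inputs:
\begin{itemize}
\item for rational $D$ in general type, Miyaoka's bound for smooth rational curves, combined with $4\mid\delta(X)$;
\item for negative multisections in $\kappa=1$, the orbibundle Miyaoka--Yau--Sakai inequality;
\item for the equality case $K_X^2=3c_2(X)$ with $g(D)=1$, the observation that a ball quotient contains no elliptic curve.
\end{itemize}
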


This result is a logarithmic counterpart of the rigidity problem for the ordinary tangent bundle studied by Peternell~\cite{Peternell}. The presence of a nonempty boundary changes the exceptional set substantially. In particular, no minimal irrational ruled surface and no minimal surface of nonnegative Kodaira dimension carries a genuinely rigid logarithmic tangent bundle with smooth irreducible boundary. Thus all rigid pairs occur on rational surfaces and are precisely the three types displayed above.

The numerical starting point is a Riemann--Roch formula for the trace-free endomorphism bundle:
\[
\chi\bigl(X,\mathcal{E}\!nd_0(\Tt_X(-\log D))\bigr) =3\chi(\Oo_X)+(K_X+D)^2 -4\bigl(c_2(X)+K_X\cdot D+D^2\bigr).
\]
Equivalently, it can be written in terms of $K_X^2-3c_2(X)$, $K_X\cdot D$, and $D^2$. Since $\mathcal{E}\!nd_0(\Tt_X(-\log D))$ is self-dual, this formula combines with Serre duality and the fixed-determinant Kuranishi map to give effective criteria for positive-dimensional deformations. In many of the cases considered below, strict negativity of this Euler characteristic in Corollary~\ref{numerical} already proves the required nonrigidity. The limiting cases, where the Euler characteristic vanishes or may be nonnegative, require additional geometric arguments.

We prove Theorem~\ref{minimal-surface-pairs} by following the Enriques--Kodaira classification. For rational surfaces we first treat smooth hypersurfaces in projective space and then give a complete calculation on Hirzebruch surfaces. On an irrational ruled surface, the relative logarithmic tangent sequence reduces the problem to the geometry of fibers, sections, and ramified multisections. The sole numerical equality case is an \'etale multisection over an elliptic curve; after a finite \'etale base change, a transverse foliation splits the relevant sequence and supplies nontrivial trace-free endomorphisms. For surfaces of Kodaira dimension zero, Riemann--Roch settles the K3 and Enriques cases, while the abelian and bielliptic equality cases are handled by their invariant tangent directions. For Kodaira dimension one, we use the elliptic fibration, the canonical bundle formula, and the orbibundle Miyaoka--Yau--Sakai inequality in \cite[Theorem~1.3(i)]{MiyaokaOrbibundle} to treat both nef boundaries and negative multisections. Finally, on a surface of general type, logarithmic Bogomolov--Miyaoka--Yau deals with positive-genus boundaries, whereas Miyaoka's inequality for rational curves treats the remaining case.

The second theme of the paper is the behavior under birational modification in Section \ref{blow-up}. We compute the change of the logarithmic Chern numbers and of the Euler characteristic under a point blow-up. More strongly, an elementary-transform description of the logarithmic cotangent bundle shows that genuine nonrigidity is monotone under blow-ups for which the strict transform of the boundary remains smooth. If the bundle downstairs is rigid and its second trace-free endomorphism cohomology vanishes, the first blow-up already creates a genuine deformation. This gives a rigidity classification for rational pairs admitting a smooth-boundary minimalization.

\begin{theorem}\label{main-blowup}
Let $(X,D)$ be a smooth projective surface pair.
\begin{enumerate}[label=\textup{(\arabic*)}]
\item Let $(Y,D_Y)=(X_n,D_n)\to\cdots\to(X_0,D_0)=(X,D)$ be a nonempty sequence of point blow-ups, where at every stage $D_i$ is the reduced strict transform of $D_{i-1}$ and remains smooth. Then
\[
\mathrm{h}^1\bigl(Y,\mathcal E\!nd_0(\Tt_Y(-\log D_Y))\bigr)\geq\mathrm{h}^1\bigl(X,\mathcal E\!nd_0(\Tt_X(-\log D))\bigr).
\]
Consequently, infinitesimal nonrigidity of the logarithmic tangent bundle is preserved under such blow-ups. If, moreover,
\[
\mathrm{H}^2\bigl(X,\mathcal E\!nd_0(\Tt_X(-\log D))\bigr)=0,
\]
then the inequality is strict. In particular, every nontrivial such blow-up has an infinitesimally nonrigid logarithmic tangent bundle.
\item Let $\sigma\colon(Y,D)\rightarrow(X,\Delta)$ be the contraction of a $(-1)$-curve $C\subset Y$, where $D$ is
smooth and meets $C$ transversely, and put $\Delta=\sigma(D)$.

\begin{itemize}
\item [\rm (i)] If $\Delta$ has an ordinary triple point at $p=\sigma(C)$, then
\[
\mathrm{H}^1\bigl(Y,\mathcal E\!nd_0(\Omega_Y^1(\log D))\bigr)\cong\mathrm{H}^1\bigl(X,\mathcal E\!nd_0(\Omega_X^1(\log\Delta)) \bigr).
\]
\item [\rm (ii)] If $\Delta$ has an ordinary node or an ordinary quadruple point at $p$, then
\[
\begin{aligned}
&\mathrm{h}^1\bigl( Y,\mathcal E\!nd_0(\Omega_Y^1(\log D)) \bigr)\\ 
&\qquad = \mathrm{h}^1\bigl(X,\mathcal E\!nd_0(\Omega_X^1(\log\Delta))\bigr)+1-\rho_p,
\end{aligned}
\]
where $\rho_p\in\{0,1\}$ is the rank of the evaluation map associated with the corresponding local elementary transformation.
\end{itemize}
\end{enumerate}
\end{theorem}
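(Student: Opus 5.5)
We plan to compare $\Omega^1_Y(\log D_Y)$ with $\sigma^*\Omega^1_X(\log D)$ through an elementary transformation along the exceptional curve, and then push forward the trace-free endomorphism bundle. This reduces everything to a local computation at the blown-up point. We begin with a single blow-up $\sigma\colon Y\to X$ at $p$, with exceptional curve $E$.

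\textbf{Case $p\in D$ with $D$ smooth at $p$.} Here $D_Y=\tilde D$ is the strict transform, and $\tilde D$ meets $E$ transversally at one point $q$. Comparing residues gives an exact sequence
\[
0\to\sigma^*\Omega^1_X(\log D)\to\Omega^1_Y(\log D_Y)\to\Oo_E(-1)\to 0 ,
\]
or its variant in which the cokernel is $\Oo_E(-1)$ twisted at $q$. One checks this in local coordinates $(x,y)$ with $D=\{y=0\}$: the forms $dx$ and $dy/y$ pull back, and $\Omega^1_Y(\log\tilde D)$ contains $dx/x$ or the corresponding chart form along $E$.

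\textbf{Case $p\notin D$.} Then $\Omega^1_Y(\log D_Y)$ differs from $\sigma^*\Omega^1_X(\log D)$ exactly as $\Omega^1_Y$ differs from $\sigma^*\Omega^1_X$. There is a sequence $0\to\sigma^*\Omega^1_X\to\Omega^1_Y\to\Omega^1_{E}\cong\Oo_E(-2)\to0$.

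\textbf{Reduction to a local comparison.} In both cases $\Ee_Y$ is an elementary transform of $\sigma^*\Ee_X$ along $E$. Hence $\mathcal E\!nd_0(\Ee_Y)$ and $\sigma^*\mathcal E\!nd_0(\Ee_X)$ agree off $E$. We compute $R\sigma_*\mathcal E\!nd_0(\Ee_Y)$ by filtering along $E$ and using $\sigma_*\Oo_E(a)$ and $R^1\sigma_*\Oo_E(a)$ for $a\ge -1$. The aim is an exact triangle
\[
\mathcal E\!nd_0(\Ee_X)\to R\sigma_*\mathcal E\!nd_0(\Ee_Y)\to \mathcal Q_p ,
\]
with $\mathcal Q_p$ a skyscraper complex at $p$ of small length. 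The long exact sequence then reads
\[
0\to \mathrm H^0(\mathcal E\!nd_0\Ee_X)\to \mathrm H^0(\mathcal E\!nd_0\Ee_Y)\to \mathrm H^0(\mathcal Q_p)\to\mathrm H^1(\mathcal E\!nd_0\Ee_X)\to \mathrm H^1(\mathcal E\!nd_0\Ee_Y)\to\cdots .
\]
We need the map $\mathrm H^1(X)\to\mathrm H^1(Y)$ to be injective, which is the expected inequality. For this it suffices that $\mathrm H^0(\mathcal E\!nd_0\Ee_Y)\to\mathrm H^0(\mathcal Q_p)$ is surjective, or that $\mathcal Q_p$ is concentrated in degree $1$.

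I expect the computation to produce the following. The endomorphisms preserving the flag at $p$ (the line $T_pD$, or the direction picked out by the blow-up) form a subsheaf of colength $1$ or $2$. The extra local contributions live in $R^1$ and contribute to $\mathrm H^1(Y)$ rather than cutting it down. Pulling back by $\sigma$ always gives injectivity on $\mathrm H^1$, since $\sigma_*\sigma^*=\mathrm{id}$ and the Leray sequence injects $\mathrm H^1$. The correct formulation is therefore to route the argument through the $\sigma^*$ comparison.

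\textbf{Strictness.} This is an Euler characteristic computation. By Corollary~\ref{numerical}'s formula, the blow-up lowers $\chi(\mathcal E\!nd_0)$, for example by $4$ when $p\in D$, using $K_Y+D_Y=\sigma^*(K_X+D)$ and $c_2$ increasing by one. Also $\mathrm h^0$ does not increase, because $\mathcal E\!nd_0(\Ee_Y)\subset\sigma^*$-type sheaves agree generically. Finally, $\mathrm h^2_Y\le \mathrm h^2_X$ by Serre duality and the same injectivity applied to the dual twist by $K$. When $\mathrm h^2_X=0$ we get $\mathrm h^1_Y-\mathrm h^1_X\ge \chi_X-\chi_Y>0$. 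Iterating gives part (1).

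\textbf{Part (2).} Now $\Delta$ is singular, and $\Omega^1_X(\log\Delta)$ is Saito's reflexive sheaf. On the blow-up, $\Omega^1_Y(\log D)$ with $D$ the full strict transform (with $C$ not included) is again an elementary transform of $\sigma^*\Omega^1_X(\log\Delta)^{\vee\vee}$ along $C$. The cokernel is $\Oo_C(a)$, where $a$ depends on the multiplicity $m$ through $\sigma^*\Delta=D+mC$.

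The same triangle gives
\[
\mathrm h^1_Y=\mathrm h^1_X+\dim\mathrm H^0(\mathcal Q_p)-\operatorname{rank}\bigl(\mathrm H^0(\mathcal E\!nd_0\Ee_Y)\to \mathrm H^0(\mathcal Q_p)\bigr)\pm(\text{degree-}1\text{ terms}).
\]
The main obstacle is the explicit local computation of $\mathcal Q_p$ for $m=2,3,4$. I would do it via $\mathcal E\!nd_0(\Oo_C(a)\oplus\Oo_C(b))$-type graded pieces, whose cohomologies on $C\cong\PP^1$ vanish exactly when $|a-b|\le1$. This should happen for $m=3$, giving the isomorphism in (i). For $m=2$ and $m=4$, the pieces should contribute a one-dimensional $\mathrm H^1$ together with an evaluation map of rank $\rho_p$, giving (ii).
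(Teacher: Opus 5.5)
Your proposal has genuine gaps: the central comparison in part~(1) runs the wrong way, the strictness step has a sign error, and the mechanism you predict for part~(2) is not the one at work.

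\textbf{Part~(1).} For $p\in D$ there is no map $\sigma^*\Omega^1_X(\log D)\to\Omega^1_Y(\log D_Y)$. With $D=(y=0)$ and $x=u$, $y=uv$, one has $\sigma^*(dy/y)=du/u+dv/v$, which has a pole along $E$, and $E$ is not a boundary component upstairs. More importantly, a general trace-free endomorphism of $\Ee_X$ does not extend across $E$. So there is no morphism $\mathcal{E}\!nd_0(\Ee_X)\to R\sigma_*\mathcal{E}\!nd_0(\Ee_Y)$ with cone supported at $p$, and the long exact sequence beginning $0\to\mathrm{H}^0(\mathcal{E}\!nd_0\Ee_X)\to\mathrm{H}^0(\mathcal{E}\!nd_0\Ee_Y)$ has no basis. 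The injectivity of $\sigma^*$ on $\mathrm{H}^1$ concerns $\sigma^*\mathcal{E}\!nd_0(\Ee_X)$, which is a different sheaf from $\mathcal{E}\!nd_0(\Ee_Y)$.

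What is true is the inclusion $\sigma_*\mathcal{E}\!nd_0(\Ee_Y)\hookrightarrow\mathcal{E}\!nd_0(\Ee_X)$ (torsion-free, equal off $p$, target locally free), with cokernel $Q_p$ supported at $p$. Since $\mathrm{H}^1(Q_p)=0$, the map $\mathrm{H}^1(X,\sigma_*\mathcal{E}\!nd_0\Ee_Y)\to\mathrm{H}^1(X,\mathcal{E}\!nd_0\Ee_X)$ is surjective. The five-term Leray sequence makes $\mathrm{H}^1(X,\sigma_*\mathcal{E}\!nd_0\Ee_Y)\to\mathrm{H}^1(Y,\mathcal{E}\!nd_0\Ee_Y)$ injective. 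This is how the paper gets the weak inequality, and no map $\mathrm{H}^1_X\to\mathrm{H}^1_Y$ is needed.

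In the strictness step, your duality argument for $\mathrm{h}^2_Y\le\mathrm{h}^2_X$ is fine. But from $\mathrm{h}^0_Y\le\mathrm{h}^0_X$ and $\mathrm{h}^2_Y=\mathrm{h}^2_X=0$ you only get
\[
\mathrm{h}^1_Y-\mathrm{h}^1_X=(\mathrm{h}^0_Y-\mathrm{h}^0_X)+(\chi_X-\chi_Y)\le\chi_X-\chi_Y ,
\]
which is an upper bound. A lower bound requires $\mathrm{h}^0_X-\mathrm{h}^0_Y\le\operatorname{length}Q_p$. That needs the explicit jet computation $\operatorname{length}Q_p=3$ in both cases, not colength $1$ or $2$; for $p\in D$ the conditions are $b(p)=0$, $b_x(p)=0$, $b_y(p)=2a(p)$. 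Only then do you get an increase of at least $4-3=1$, resp.\ $5-3=2$. This is exactly the length of $R^1\sigma_*\mathcal{E}\!nd_0(\Ee_Y)$ that the paper extracts from the same Euler characteristics.

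\textbf{Part~(2).} By the Key Restriction Lemma, $\Tt_Y(-\log D)_{|C}$ is $\Oo_C\oplus\Oo_C(-1)$, $\Oo_C(-1)^{\oplus2}$ and $\Oo_C(-2)\oplus\Oo_C(-1)$ for the node, the triple point and the quadruple point. All three satisfy your criterion $|a-b|\le1$. So the restriction of $\mathcal{E}\!nd_0$ to $C$ and all its twists by conormal powers have vanishing $\mathrm{H}^1$, and $R^1\sigma_*\mathcal{E}\!nd_0=0$ in every case. No one-dimensional $\mathrm{H}^1$ appears for $m=2,4$; the difference lies in degree zero.

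For the triple point, both lifted logarithmic derivations acquire exactly one factor of $u$. Hence $\Omega^1_Y(\log D)\cong\sigma^*\Omega^1_X(\log\Delta)\otimes\Oo_Y(-C)$ is a pure twist, not an elementary transform with cokernel $\Oo_C(a)$, and the projection formula gives~(i).

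For the node (frame $dx/x,dy/y$ against $du,dv/v$) and the quadruple point (lifts of $\delta_1,\delta_2$ divisible by $u$ and $u^2$ respectively), $\sigma_*\mathcal{E}\!nd_0$ has colength one in $\mathcal{E}\!nd_0(\Omega^1_X(\log\Delta))$ with cokernel $\Oo_p$. The correction then comes from
\[
\mathrm{H}^0\bigl(X,\mathcal{E}\!nd_0(\Omega^1_X(\log\Delta))\bigr)\to\CC_p\to\mathrm{H}^1\bigl(X,\sigma_*\mathcal{E}\!nd_0(\Omega^1_Y(\log D))\bigr)\to\mathrm{H}^1\bigl(X,\mathcal{E}\!nd_0(\Omega^1_X(\log\Delta))\bigr)\to0 .
\]
Thus $\rho_p$ is the rank of evaluation on downstairs sections, not of a map out of $\mathrm{H}^0(\mathcal{E}\!nd_0\Ee_Y)$ as in your formula. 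The local computations you defer as ``should happen'' are precisely the content of the proof.
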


Contractions producing singular boundaries exhibit a subtler local behavior. Using generalized logarithmic sheaves introduced in \cite{HMPV}, we obtain exact comparison formulas for ordinary double, triple, and quadruple points. Resolving an ordinary node or quadruple point changes the deformation space by a one-dimensional correction governed by an evaluation map, whereas resolving an ordinary triple point preserves the trace-free endomorphism cohomology exactly. For a tangential one-branch contraction, the exceptional restriction has nonzero first cohomology, but the required formal-neighborhood calculation remains open. The same is true for higher-multiplicity, nonordinary, and interacting exceptional configurations. Accordingly, the results of the final section are local ingredients toward, rather than a complete classification of, nonminimal surface pairs.

The paper is organized as follows. Section~2 develops the deformation theory, the logarithmic Riemann--Roch formula, and a residue criterion. Section~3 treats surfaces of Kodaira dimension minus infinity, including projective space, Hirzebruch surfaces, and irrational ruled surfaces. Sections~4--6 deal respectively with Kodaira dimensions zero, one, and two and complete the proof of the main theorem. Section~7 studies blow-ups and the local comparison problem for contractions producing singular boundary curves.

The authors would like to thank Jun-Muk Hwang for suggesting the problem studied in this paper.


\section{Preliminaries}
A \emph{smooth surface pair} $(X,D)$ is a smooth projective surface $X$ together with a smooth, possibly disconnected, reduced divisor $D$. Throughout, the base field is $\CC$. Set
\[
\Tt_X(-\log D)=\Omega_X^1(\log D)^\vee ,
\]
the logarithmic tangent bundle of rank two. Then we have $\det \Tt_X(-\log D)\cong \Oo_X(-K_X-D)$. Then the first-order deformations of $\Tt_X(-\log D)$ on the fixed surface $X$ are parametrized by
\[
\Ext^1_X(\Tt_X(-\log D),\Tt_X(-\log D))\cong \mathrm{H}^1(X,\mathcal{E}\!nd(\Tt_X(-\log D))).
\]
The trace splitting in characteristic zero gives
\[
\mathcal{E}\!nd(\Tt_X(-\log D))\cong \Oo_X\oplus\mathcal{E}\!nd_0(\Tt_X(-\log D)).
\]
The scalar summand describes first-order deformations obtained by tensoring $\Tt_X(-\log D)$ with a deformation of the trivial line bundle.

\begin{definition}
The smooth surface pair $(X,D)$ has a \emph{genuine first-order deformation} if the corresponding logarithmic tangent bundle has a genuine first-order deformation, i.e. 
\[
\mathrm{H}^1\bigl(X,\mathcal{E}\!nd_0(\Tt_X(-\log D))\bigr)\neq 0.
\]
Equivalently,
\[
\mathrm{h}^1(X,\mathcal{E}\!nd(\Tt_X(-\log D)))>q(X),
\]
where $q(X)$ is the irregularity of $X$. Note that $\mathrm{H}^1(X,\mathcal{E}\!nd_0(\Tt_X(-\log D)))$ is also the tangent space to fixed-determinant deformations of $\Tt_X(-\log D)$.
\end{definition}

The standard deformation theory of vector bundles gives a Kuranishi map with values in $\mathrm{H}^2(X,\mathcal{E}\!nd_0(\Tt_X(-\log D)))$. The following proposition is the standard Kuranishi dimension estimate
applied to the fixed-determinant deformation functor of a vector bundle; see \cite[Sections~2.1 and~2.4]{Sernesi} for the general deformation-theoretic argument and \cite[Section~4.5]{HuybrechtsLehn} for the fixed-determinant theory of sheaves.

\begin{proposition}\label{kuranishi}
If
\[
\mathrm{h}^1(X,\mathcal{E}\!nd_0(\Tt_X(-\log D)))-\mathrm{h}^2(X,\mathcal{E}\!nd_0(\Tt_X(-\log D)))>0,
\]
then the germ of the fixed-determinant deformation space of $\Tt_X(-\log D)$ contains a positive-dimensional component through $\left[\Tt_X(-\log D)\right]$. In particular, the fixed-determinant Kuranishi space of $\Tt_X(-\log D)$ has positive dimension at the distinguished point.
\end{proposition}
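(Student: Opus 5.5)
Write $\Ee=\Tt_X(-\log D)$, $\mathcal{L}=\det\Ee\cong\Oo_X(-K_X-D)$, and $A=\mathcal{E}\!nd_0(\Ee)$. The plan is to realize fixed-determinant deformations of $\Ee$ as a deformation functor with a hull. That functor will have tangent space $\mathrm{H}^1(X,A)$ and obstruction space $\mathrm{H}^2(X,A)$. The standard Kuranishi estimate then bounds the dimension of the hull from below.

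\emph{The functor.} Let $\mathrm{Def}_{\Ee,\mathcal{L}}$ be the functor on local Artinian $\CC$-algebras $(R,\mathfrak m)$ with residue field $\CC$. It sends $R$ to the set of isomorphism classes of pairs $(\Ee_R,\phi)$. Here $\Ee_R$ is a locally free sheaf on $X_R=X\times\operatorname{Spec}R$ restricting to $\Ee$. The map $\phi\colon\det\Ee_R\to\mathcal{L}\otimes R$ is an isomorphism compatible with the identification on the closed fibre.

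Since $X$ is projective and $\Ee$ is simple modulo scalars on the trace-free part, Schlessinger's conditions hold as in \cite[Section~2.4]{Sernesi}. Alternatively, one uses the fact that $\mathrm{Def}_{\Ee}$ has a hull and the determinant map $\mathrm{Def}_{\Ee}\to\mathrm{Def}_{\mathcal{L}}$ is formally smooth, as recalled in \cite[Section~4.5]{HuybrechtsLehn}. Either way, $\mathrm{Def}_{\Ee,\mathcal{L}}$ has a hull $\widehat{\Oo}$.

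\emph{Tangent and obstruction spaces.} Compute them via Čech cocycles on an affine cover on which $\Ee$ is trivial. A first-order deformation is given by gluing data $1+\epsilon\,a_{ij}$ with $a_{ij}\in\mathcal{E}\!nd(\Ee)(U_{ij})$. Fixing the determinant forces $\operatorname{tr}a_{ij}$ to be a coboundary, and after normalization $\operatorname{tr}a_{ij}=0$. Hence the tangent space is $\mathrm{H}^1(X,A)$.

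For a small extension $0\to I\to R'\to R\to0$, choose local lifts with determinant $1$; this is possible because $\mathrm{SL}$ is smooth. The failure of the cocycle condition then gives a $2$-cocycle with values in $A\otimes I$. Its class $o\in\mathrm{H}^2(X,A)\otimes I$ vanishes if and only if a lift exists. This is the trace-free obstruction theory, and it uses that trace splits off in characteristic zero.

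\emph{Dimension estimate.} By the standard argument \cite[Section~2.1]{Sernesi}, the hull has the form
\[
\widehat{\Oo}\cong \CC[[t_1,\dots,t_n]]/(f_1,\dots,f_m),\qquad n=\mathrm{h}^1(X,A),\ m\le \mathrm{h}^2(X,A),
\]
where the $f_k$ are the components of the Kuranishi map and lie in $\mathfrak m^2$. By Krull's principal ideal theorem, every irreducible component of $\operatorname{Spec}\widehat{\Oo}$ has dimension at least $n-m$. This is at least $\mathrm{h}^1-\mathrm{h}^2>0$ by hypothesis. Such a component passes through the closed point, which corresponds to $[\Ee]$, so the Kuranishi germ has positive dimension there.

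\emph{Main obstacle.} The step needing care is that the obstruction genuinely lands in the trace-free part $\mathrm{H}^2(X,A)$ rather than in $\mathrm{H}^2(X,\mathcal{E}\!nd(\Ee))$. I would settle this via the determinant-lift argument above. Equivalently, the trace of the full obstruction is the obstruction to deforming $\det\Ee$. That obstruction vanishes because the determinant is fixed, and the trace splitting identifies the kernel of the trace map with $\mathrm{H}^2(X,A)$.
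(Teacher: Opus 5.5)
Your proposal is correct and follows essentially the same route as the paper: you present the fixed-determinant Kuranishi germ (through its formal hull) as the zero locus of at most $\mathrm{h}^2(X,\mathcal{E}\!nd_0(\Tt_X(-\log D)))$ equations in a smooth germ of dimension $\mathrm{h}^1(X,\mathcal{E}\!nd_0(\Tt_X(-\log D)))$, and then apply Krull's principal ideal theorem. One minor correction to how you justify the hull: you need neither simplicity of $\Tt_X(-\log D)$, which fails in cases where the paper applies this proposition (e.g.\ the split bundle in Proposition~\ref{irr-ruled-etale-multisection}), nor formal smoothness of $\det\colon\mathrm{Def}_{\Ee}\to\mathrm{Def}_{\mathcal L}$, which is false whenever $\Ee$ has obstructed trace-free deformations; Schlessinger's conditions (H1)--(H3) hold for any vector bundle on a projective variety and are inherited by the fixed-determinant functor.
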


\begin{proof}
Let
\[
V=\mathrm{H}^1(X,\mathcal{E}\!nd_0(\Tt_X(-\log D))),\qquad W=\mathrm{H}^2(X,\mathcal{E}\!nd_0(\Tt_X(-\log D))).
\]
Fixed-determinant deformation theory associates to $\Tt_X(-\log D)$ a convergent Kuranishi map
\[
\kappa\colon (V,0)\to (W,0)
\]
whose zero germ prorepresents the fixed-determinant deformation functor up to the usual action of infinitesimal automorphisms. The constant term is zero because the origin represents $\Tt_X(-\log D)$, and the differential at the origin is zero because $V$ is the full Zariski tangent space; the first possibly nonzero term is the quadratic obstruction map given by the Yoneda bracket.

Choose linear coordinates on $W$. Then $\kappa^{-1}(0)$ is locally cut out inside the smooth germ $(V,0)$ by at most $\dim W$ holomorphic equations. Krull's principal ideal theorem, applied successively to these equations in the regular local ring $\Oo_{V,0}$, gives
\[
\dim_0\kappa^{-1}(0)\geq \dim V-\dim W=\mathrm{h}^1(X,\mathcal{E}\!nd_0(\Tt_X(-\log D)))-\mathrm{h}^2(X,\mathcal{E}\!nd_0(\Tt_X(-\log D))).
\]
If this number is positive, an irreducible component of the zero germ through the origin has positive dimension. Any nonzero tangent direction on that component is trace-free and therefore gives a genuine fixed-determinant first-order deformation of $\Tt_X(-\log D)$.
\end{proof}

\begin{remark}
The conclusion of Proposition \ref{kuranishi} does not assert that the entire Kuranishi germ is smooth. For that reason we use the phrase ``positive-dimensional deformation'' rather than calling every such deformation unobstructed.
\end{remark}

\begin{remark}
Three deformation problems must not be conflated.
\begin{enumerate}[label=\textup{(\arabic*)}]
\item Deformations of $\Tt_X(-\log D)$ on the fixed $X$ are controlled by $\mathrm{H}^i(X,\mathcal{E}\!nd(\Tt_X(-\log D)))$.
\item Locally trivial deformations of the pair $(X,D)$ are controlled by $\mathrm{H}^i(X,\Tt_X(-\log D))$.
\item The logarithmic Atiyah class differentiates the assignment $(X,D)\mapsto \Tt_X(-\log D)$ and produces a natural map from infinitesimal pair deformations to bundle deformations.
\end{enumerate}
The third problem asks which elements of $\mathrm{H}^1(X,\mathcal{E}\!nd_0(\Tt_X(-\log D)))$ are geometric, in the sense of arising from a moving pair: this is closely related to the infinitesimal logarithmic Torelli map. A complete treatment should compute its kernel and cokernel rather than identify the two deformation spaces.
\end{remark}

The logarithmic tangent sequence is
\begin{equation}\label{log-tangent}
0\to \Tt_X(-\log D)\to \Tt_X \to \Oo_D(D)\to 0.
\end{equation}
In the Grothendieck group, $[\Oo_D(D)]=[\Oo_X(D)]-[\Oo_X]$. It follows that

\begin{equation}\label{chern}
c_1(\Tt_X(-\log D))=-K_X-D, \qquad
c_2(\Tt_X(-\log D))=c_2(X)+K_X\cdot D+D^2.
\end{equation}
We write
\[
\bar c_1^2(X,D)=(K_X+D)^2, \qquad \bar c_2(X,D)=c_2(X)+K_X\cdot D+D^2.
\]
For a general bundle $\Ff$ of rank two, one has
\[
c_1(\mathcal{E}\!nd_0(\Ff))=0, \qquad c_2(\mathcal{E}\!nd_0(\Ff))=4c_2(\Ff)-c_1(\Ff)^2.
\]

\begin{theorem}\label{RR}
For every smooth projective surface pair $(X,D)$, one obtains
\begin{equation}\label{RR-log}
\chi\bigl(X,\mathcal{E}\!nd_0(\Tt_X(-\log D))\bigr) =3\chi(\Oo_X)+\bar c_1^2(X,D)-4\bar c_2(X,D).
\end{equation}
Equivalently,
\begin{equation}
\chi\bigl(X,\mathcal{E}\!nd_0(\Tt_X(-\log D))\bigr) =\frac54\bigl(K_X^2-3c_2(X)\bigr) -2K_X\cdot D-3D^2.
\end{equation}
\end{theorem}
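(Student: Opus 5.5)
The plan is to apply Hirzebruch--Riemann--Roch to the rank-three bundle $\Ee_0:=\mathcal{E}\!nd_0(\Tt_X(-\log D))$, using the Chern class formulas \eqref{chern} and the general identities for $\mathcal{E}\!nd_0$ of a rank-two bundle recorded just before the statement. No cohomological input is needed beyond these identities and Noether's formula.

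For a vector bundle $\Ee$ of rank $r$ on a smooth projective surface, Riemann--Roch reads
\[
\chi(X,\Ee)=r\,\chi(\Oo_X)+\tfrac12 c_1(\Ee)\cdot\bigl(c_1(\Ee)-K_X\bigr)-c_2(\Ee).
\]
Take $\Ee=\Ee_0$, so that $r=3$. Since $c_1(\Ee_0)=0$, the middle term vanishes and we get $\chi(\Ee_0)=3\chi(\Oo_X)-c_2(\Ee_0)$. Next apply $c_2(\mathcal{E}\!nd_0(\Ff))=4c_2(\Ff)-c_1(\Ff)^2$ with $\Ff=\Tt_X(-\log D)$. By \eqref{chern}, $c_1(\Ff)^2=(K_X+D)^2=\bar c_1^2(X,D)$ and $c_2(\Ff)=\bar c_2(X,D)$. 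Substituting these gives \eqref{RR-log} directly.

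For the second expression I would use Noether's formula $12\chi(\Oo_X)=K_X^2+c_2(X)$, which gives $3\chi(\Oo_X)=\tfrac14\bigl(K_X^2+c_2(X)\bigr)$. Then I would expand the remaining terms:
\[
\bar c_1^2-4\bar c_2=K_X^2+2K_X\cdot D+D^2-4c_2(X)-4K_X\cdot D-4D^2.
\]
Collecting terms yields $\tfrac54K_X^2-\tfrac{15}4c_2(X)-2K_X\cdot D-3D^2$, which is the stated formula.

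The only point that needs care, and hence the main (mild) obstacle, is the justification of $c_2(\mathcal{E}\!nd_0\Ff)=4c_2(\Ff)-c_1(\Ff)^2$. I would verify it by the splitting principle. If $\Ff$ has Chern roots $a,b$, then $\mathcal{E}\!nd\,\Ff$ has roots $0,0,a-b,b-a$, so $\mathcal{E}\!nd_0\Ff$ has roots $0,a-b,b-a$. Hence its $c_1$ is $0$ and its $c_2$ is $-(a-b)^2=4ab-(a+b)^2$. I would also confirm that \eqref{chern} follows from \eqref{log-tangent}: by Whitney's formula, $c(\Tt_X(-\log D))=c(\Tt_X)\,c(\Oo_X(D))^{-1}$, and expanding this gives the stated classes.
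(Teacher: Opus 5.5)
Your proposal is correct and follows the paper's proof essentially step for step: Hirzebruch--Riemann--Roch for the rank-three bundle with vanishing $c_1$, the identity $c_2(\mathcal{E}\!nd_0\Ff)=4c_2(\Ff)-c_1(\Ff)^2$ combined with \eqref{chern}, and then Noether's formula to reach the second expression. The only addition is that you verify the $c_2$ identity via Chern roots and derive \eqref{chern} from \eqref{log-tangent}; the paper simply quotes both (for the latter, Whitney's formula is applied in the Grothendieck group, using $[\Oo_D(D)]=[\Oo_X(D)]-[\Oo_X]$).
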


\begin{proof}
For a bundle $\Ff$ of rank $r$ on $X$, Hirzebruch--Riemann--Roch gives
\[
\chi(\Ff)=r\chi(\Oo_X)+\frac12c_1(\Ff)\cdot(c_1(\Ff)-K_X)-c_2(\Ff).
\]
Taking $\Ff=\mathcal{E}\!nd_0(\Tt_X(-\log D))$, whose rank is three and whose first Chern class is
zero, yields
\begin{align*}
\chi(\mathcal{E}\!nd_0(\Tt_X(-\log D)))&=3\chi(\Oo_X)-c_2(\mathcal{E}\!nd_0(\Tt_X(-\log D)))\\
&=3\chi(\Oo_X)+\bar c_1^2(X,D)-4\bar c_2(X,D).
\end{align*}
Expanding the last line gives
\[
3\chi(\Oo_X)+K_X^2-4c_2(X)-2K_X\cdot D-3D^2.
\]
Finally Noether's identity $12\chi(\Oo_X)=K_X^2+c_2(X)$ transforms its first three terms into
\[
\frac14(K_X^2+c_2(X))+K_X^2-4c_2(X)=\frac54(K_X^2-3c_2(X)). \qedhere
\]
\end{proof}

The bundle $\mathcal{E}\!nd_0(\Tt_X(-\log D))$ is self-dual under the trace pairing. Therefore, Serre duality gives
\[
\mathrm{H}^2(X,\mathcal{E}\!nd_0(\Tt_X(-\log D)))^\vee \cong \mathrm{H}^0(X,\mathcal{E}\!nd_0(\Tt_X(-\log D))\otimes K_X).
\]

\begin{corollary}\label{numerical}
Each of the following conditions implies a genuine positive-dimensional deformation:
\begin{enumerate}[label=\textup{(\arabic*)}]
\item $\chi(X,\mathcal{E}\!nd_0(\Tt_X(-\log D)))<0$;
\item $4\bar c_2(X,D)>(K_X+D)^2+3\chi(\Oo_X)$;
\item more generally, $h^0(X,\mathcal{E}\!nd_0(\Tt_X(-\log D)))>\chi(X,\mathcal{E}\!nd_0(\Tt_X(-\log D)))$.
\end{enumerate}
\end{corollary}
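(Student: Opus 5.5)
The plan is to reduce all three conditions to the single inequality $\mathrm{h}^1-\mathrm{h}^2>0$ for $\Ee_0:=\mathcal{E}\!nd_0(\Tt_X(-\log D))$, and then invoke Proposition~\ref{kuranishi}. The key identity is the definition of the Euler characteristic:
\[
\chi(X,\Ee_0)=\mathrm{h}^0(X,\Ee_0)-\mathrm{h}^1(X,\Ee_0)+\mathrm{h}^2(X,\Ee_0),
\]
which we rewrite as
\[
\mathrm{h}^1(X,\Ee_0)-\mathrm{h}^2(X,\Ee_0)=\mathrm{h}^0(X,\Ee_0)-\chi(X,\Ee_0).
\]
Condition (3) says precisely that the right-hand side is positive. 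Proposition~\ref{kuranishi} then yields a positive-dimensional component of the fixed-determinant Kuranishi germ through $[\Tt_X(-\log D)]$. Its tangent directions are nonzero trace-free classes, and these are genuine deformations. Hence (3) suffices.

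Next we show (1) implies (3). Since $\mathrm{h}^0(X,\Ee_0)\geq 0$, the hypothesis $\chi(X,\Ee_0)<0$ gives
\[
\mathrm{h}^0(X,\Ee_0)\geq 0>\chi(X,\Ee_0),
\]
so (3) holds. Finally, (2) is a restatement of (1). By Theorem~\ref{RR},
\[
\chi(X,\Ee_0)=3\chi(\Oo_X)+(K_X+D)^2-4\bar c_2(X,D).
\]
This is negative exactly when $4\bar c_2(X,D)>(K_X+D)^2+3\chi(\Oo_X)$. So (2) is equivalent to (1), and the chain of implications is (2) $\Leftrightarrow$ (1) $\Rightarrow$ (3).

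There is no real obstacle here; the argument is bookkeeping on top of Proposition~\ref{kuranishi} and Theorem~\ref{RR}. The only point to state carefully is the meaning of ``genuine positive-dimensional deformation''. Positivity of $\mathrm{h}^1-\mathrm{h}^2$ gives positive dimension of the germ, as proved in Proposition~\ref{kuranishi}. We claim nothing beyond that, such as smoothness of the germ or nonisotriviality. The latter would require the additional comparison with automorphisms mentioned in the introduction.
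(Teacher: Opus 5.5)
Your proposal is correct and follows essentially the same route as the paper: rewrite $\mathrm{h}^1-\mathrm{h}^2=\mathrm{h}^0-\chi$ for the trace-free endomorphism bundle, read (2) as negativity of the Riemann--Roch expression from Theorem~\ref{RR}, note that (1) implies (3) since $\mathrm{h}^0\geq0$, and conclude with Proposition~\ref{kuranishi}. The paper states only that (2) implies (1); your observation that the two are equivalent is correct and does no harm.
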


\begin{proof}
Setting $\Ff=\mathcal{E}\!nd_0(\Tt_X(-\log D))$, Theorem \ref{RR} yields
\[
\mathrm{h}^1(\Ff)-\mathrm{h}^2(\Ff)=h^0(\Ff)-3\chi(\Oo_X)-\bar c_1^2(X,D)+4\bar c_2(X,D).
\]
If $\chi(\Ff)<0$, then $\mathrm{h}^0(\Ff)-\chi(\Ff)>0$ because $\mathrm{h}^0(\Ff)\geq0$. Condition~\textup{(2)} is exactly the strict negativity of the right-hand
side of \eqref{RR-log}; hence it implies condition~\textup{(1)}. Condition~\textup{(3)} says directly that $\mathrm{h}^0(\Ff)-\chi(\Ff)>0$. Thus in every case $\mathrm{h}^1(\Ff)-\mathrm{h}^2(\Ff)>0$, and Proposition~\ref{kuranishi} produces a positive-dimensional fixed-determinant deformation.
\end{proof}

\begin{remark}
The logarithmic Bogomolov--Miyaoka--Yau inequality $(K_X+D)^2\leq 3\bar c_2(X,D)$, when applicable, does not by itself imply condition~\textup{(2)} of Corollary~\ref{numerical}. The additional term $3\chi(\Oo_X)$ is essential.
\end{remark}

Let $A$ be a smooth divisor meeting $D$ transversely. The residue sequence for the enlarged simple-normal-crossing divisor $D+A$ is
\[
0\to \Omega_X^1(\log D) \to \Omega_X^1(\log(D+A)) \to \Oo_A\to 0.
\]
Tensoring by $\Tt_X(-\log D)$ gives
\begin{equation}\label{residue-E}
0\to \mathcal{E}\!nd(\Tt_X(-\log D)) \to  \Tt_X(-\log D)\otimes\Omega_X^1(\log(D+A)) \to \Tt_X(-\log D)_{|A}\to 0.
\end{equation}

\begin{proposition}\label{residue}
Assume that
\[
\mathrm{H}^0(X,\mathcal{E}\!nd(\Tt_X(-\log D)))\cong \mathrm{H}^0\bigl(X,\Tt_X(-\log D)\otimes\Omega_X^1(\log(D+A))\bigr).
\]
If $\mathrm{H}^0(A,\Tt_X(-\log D)_{|A})\neq0$, then $\mathrm{H}^1(X,\mathcal{E}\!nd(\Tt_X(-\log D)))\neq0$.
\end{proposition}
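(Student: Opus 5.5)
The plan is to take the long exact cohomology sequence of the residue sequence \eqref{residue-E} and conclude by a dimension count. Write $\Ee=\Tt_X(-\log D)$ and $\Ff=\Ee\otimes\Omega_X^1(\log(D+A))$. The first step is to check that \eqref{residue-E} is exact. It is obtained by tensoring the residue sequence with the locally free sheaf $\Ee$, so exactness is preserved. The left-hand term is $\Ee\otimes\Omega_X^1(\log D)=\Ee\otimes\Ee^\vee\cong\mathcal{E}\!nd(\Ee)$, and the right-hand term is $\Ee\otimes\Oo_A=\Ee_{|A}$. Taking cohomology then gives
\[
0\to \mathrm{H}^0(X,\mathcal{E}\!nd(\Ee))\to \mathrm{H}^0(X,\Ff)\to \mathrm{H}^0(A,\Ee_{|A})\to[\delta] \mathrm{H}^1(X,\mathcal{E}\!nd(\Ee))\to\cdots .
\]

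Next I would interpret the hypothesis. The isomorphism assumed in the statement should be read as saying that the natural inclusion $\mathrm{H}^0(\mathcal{E}\!nd(\Ee))\hookrightarrow \mathrm{H}^0(\Ff)$ induced by the sheaf map is bijective. Since this map is always injective, it is enough to know that the two spaces are finite-dimensional of equal dimension, which holds because $X$ is projective. It follows that the restriction map $\mathrm{H}^0(\Ff)\to\mathrm{H}^0(A,\Ee_{|A})$ is zero, and by exactness the connecting map $\delta$ is injective. Hence any nonzero section of $\Ee_{|A}$ maps to a nonzero class in $\mathrm{H}^1(X,\mathcal{E}\!nd(\Ee))$, and in fact $\mathrm{h}^1(\mathcal{E}\!nd(\Ee))\geq \mathrm{h}^0(A,\Ee_{|A})$.

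There is no real obstacle in this argument; the only point needing care is the one just addressed, namely that an abstract isomorphism of finite-dimensional spaces forces the natural injection to be surjective. I would also add a remark that the conclusion concerns the full endomorphism bundle. To get a genuine (trace-free) class one must compare with $q(X)$, that is, one needs $\mathrm{h}^0(A,\Ee_{|A})>q(X)$, or else check that $\delta$ lands outside the scalar summand $\mathrm{H}^1(\Oo_X)$. The statement as given does not require this.
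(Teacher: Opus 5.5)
Your proposal is correct and follows the paper's proof: take the long exact cohomology sequence of \eqref{residue-E}, use the hypothesis to see that $\mathrm{H}^0(X,\Tt_X(-\log D)\otimes\Omega_X^1(\log(D+A)))\to\mathrm{H}^0(A,\Tt_X(-\log D)_{|A})$ is zero, and conclude that the connecting map into $\mathrm{H}^1(X,\mathcal{E}\!nd(\Tt_X(-\log D)))$ is injective. You also spell out a step the paper leaves implicit, namely that an abstract isomorphism of finite-dimensional spaces forces the natural injection to be bijective.
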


\begin{proof}
The beginning of the long exact cohomology sequence of
\eqref{residue-E} is
\begin{align*}
0&\to \mathrm{H}^0(X,\mathcal{E}\!nd(\Tt_X(-\log D))) \to \mathrm{H}^0\bigl(X,\Tt_X(-\log D)\otimes\Omega_X^1(\log(D+A))\bigr)\\
&\to \mathrm{H}^0(A,\Tt_X(-\log D)_{|A}) \xrightarrow{~~\partial~~}\mathrm{H}^1(X,\mathcal{E}\!nd(\Tt_X(-\log D))).
\end{align*}
The assumption shows that $\partial$ is injective, and so the assertion follows.
\end{proof}

For the genuine version, the identity section determines
\[
\Oo_X\subset \mathcal{E}\!nd(\Tt_X(-\log D)) \subset \Tt_X(-\log D)\otimes\Omega_X^1(\log(D+A)).
\]
Setting $\Rr_A:=\Tt_X(-\log D)\otimes\Omega_X^1(\log(D+A)) \big/ \Oo_X$, we have 
\begin{equation}\label{residue-zero-seq}
0\to \mathcal{E}\!nd_0(\Tt_X(-\log D)) \to \Rr_A\to \Tt_X(-\log D)_{|A}\to 0.
\end{equation}

\begin{proposition}\label{residue-zero}
If 
\[
\mathrm{H}^0(X,\mathcal{E}\!nd_0(\Tt_X(-\log D)))\cong \mathrm{H}^0(X,\Rr_A)
\]
and $\mathrm{H}^0(A,\Tt_X(-\log D)_{|A})\neq0$, then
$\mathrm{H}^1(X,\mathcal{E}\!nd_0(\Tt_X(-\log D)))\neq0$.
\end{proposition}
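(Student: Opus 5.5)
The plan is to repeat the proof of Proposition~\ref{residue}, now using the short exact sequence \eqref{residue-zero-seq} in place of \eqref{residue-E}. First I will check that \eqref{residue-zero-seq} really is exact, since this is the only non-formal point. Start from \eqref{residue-E}. The identity section $\Oo_X\to\mathcal{E}\!nd(\Tt_X(-\log D))$ is a split injection: the trace splits it in characteristic zero, and the complement is $\mathcal{E}\!nd_0(\Tt_X(-\log D))$. Compose it with the inclusion into $\Tt_X(-\log D)\otimes\Omega_X^1(\log(D+A))$ and divide the first two terms of \eqref{residue-E} by this copy of $\Oo_X$. Because $\Oo_X$ sits inside the kernel of the map to $\Tt_X(-\log D)_{|A}$, the snake lemma (or just the third isomorphism theorem) gives
\[
0\to \mathcal{E}\!nd(\Tt_X(-\log D))/\Oo_X\cong\mathcal{E}\!nd_0(\Tt_X(-\log D))\to \Rr_A\to \Tt_X(-\log D)_{|A}\to 0.
\]
I do not need $\Rr_A$ to be locally free for the cohomology argument, but it is anyway: the map $\Oo_X\to\Tt_X(-\log D)\otimes\Omega_X^1(\log(D+A))$ has nowhere-vanishing image. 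Locally, with logarithmic frames, it is the identity endomorphism composed with the inclusion $\Omega^1_X(\log D)\subset\Omega^1_X(\log (D+A))$, and this inclusion is fibrewise injective off $A$. Along $A$, the identity maps to the $\Tt\otimes\Omega^1(\log D)$ part of the fibre. That part is nonzero because the fibre map $\Omega^1(\log D)\to\Omega^1(\log(D+A))$ at a point of $A$ kills only the conormal direction, while the identity has rank $2$.

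Next I will take the long exact cohomology sequence of \eqref{residue-zero-seq}:
\[
0\to \mathrm{H}^0(\mathcal{E}\!nd_0(\Tt_X(-\log D)))\xrightarrow{~\alpha~}\mathrm{H}^0(\Rr_A)\to \mathrm{H}^0(A,\Tt_X(-\log D)_{|A})\xrightarrow{~\partial~}\mathrm{H}^1(\mathcal{E}\!nd_0(\Tt_X(-\log D))).
\]
I read the hypothesis $\mathrm{H}^0(\mathcal{E}\!nd_0)\cong\mathrm{H}^0(\Rr_A)$ as saying that the natural injection $\alpha$ is an isomorphism. Since both spaces are finite-dimensional and $\alpha$ is injective, an abstract isomorphism of the same spaces gives equal dimensions, and that again forces $\alpha$ to be bijective. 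Exactness then makes the restriction map $\mathrm{H}^0(\Rr_A)\to\mathrm{H}^0(A,\Tt_X(-\log D)_{|A})$ zero. So $\partial$ is injective, and a nonzero section of $\Tt_X(-\log D)_{|A}$ produces a nonzero class in $\mathrm{H}^1(X,\mathcal{E}\!nd_0(\Tt_X(-\log D)))$.

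The only step needing care is identifying the kernel of $\Rr_A\to\Tt_X(-\log D)_{|A}$ with $\mathcal{E}\!nd_0$, which comes down to the trace splitting. Everything else is formal.
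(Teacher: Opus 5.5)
Your proposal is correct and follows the paper's own proof, which just reruns the argument of Proposition~\ref{residue} on \eqref{residue-zero-seq}: the long exact sequence shows that the connecting map $\partial$ is injective. Your extra checks fill in details the paper leaves implicit, namely the exactness of \eqref{residue-zero-seq} via the trace splitting, and the dimension count showing that an abstract isomorphism of the two $\mathrm{H}^0$'s forces the natural injection to be bijective.
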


\begin{proof}
The same argument in the proof of Proposition \ref{residue} works verbatim with \eqref{residue-zero-seq}. 
\end{proof}


\section{Surfaces of \texorpdfstring{$\kappa=-\infty$}{kappa = -infty}}
\subsection{Projective space}
Let $D_d\subset\mathbb P^n$ be a smooth hypersurface of degree $d$, and write $H=c_1(\Oo_{\PP^n}(1))$.  We use the logarithmic tangent convention
\[
\bar c_i(\PP^n,D_d) :=c_i\bigl(\Tt_{\PP^n}(-\log D_d)\bigr).
\]
The logarithmic tangent sequence and the Euler sequence give
\[
c\bigl(\Tt_{\PP^n}(-\log D_d)\bigr) =\frac{(1+H)^{n+1}}{1+dH},
\]
and it follows that
\begin{align}
\bar c_1(\PP^n,D_d) &=-K_{\PP^n}-D_d=(n+1-d)H, \\
\bar c_2(\mathbb P^n,D_d)&=\left[\binom{n+1}{2}-(n+1)d+d^2\right]H^2;
\end{align}
see \cite{DK}. For $n=2$, putting $\Ee_{2,d}=\Tt_{\PP^2}(-\log D_d)$, these formulas specialize to
\[
\bar c_1^2(\PP^2,D_d)=(d-3)^2, \qquad \bar c_2(\PP^2,D_d)=d^2-3d+3.
\]
Thus Theorem~\ref{RR} becomes
\begin{equation}\label{plane-chi}
\chi\bigl(\PP^2,\mathcal{E}\!nd_0(\Ee_{2,d})\bigr)=-3d(d-2).
\end{equation}
We now treat all projective dimensions simultaneously; the calculation above will give the sharper Kuranishi estimate in dimension two.

\begin{theorem}\label{proj-hypersf}
For a smooth hypersurface $D_d\subset\mathbb P^n$ with $n\ge 2$ of
degree $d\geq1$, put
\[
\Ee_{n,d}=\Tt_{\PP^n}(-\log D_d).
\]
Then $\mathrm{H}^1\bigl(\PP^n,\mathcal{E}\!nd_0(\Ee_{n,d})\bigr)=0$ if and only if $d\in\{1,2\}$. More precisely:
\begin{enumerate}[label=\textup{(\arabic*)}]
\item if $d=1$, then $\Ee_{n,1}\cong \Oo_{\PP^n}(1)^{\oplus n}$; \smallskip
\item if $d=2$, then $\Ee_{n,2}\cong\Omega_{\PP^n}^1(2)$; \smallskip
\item if $d\geq3$ and $n\geq3$, then
\begin{equation}\label{ext-lower-bound}
\mathrm{h}^1\bigl(\PP^n,\mathcal{E}\!nd_0(\Ee_{n,d})\bigr)\geq (n+1) \left[\binom{n+d-1}{n}-(n+1)\right]>0;
\end{equation}
 \item if $d\geq 3$ and $n=2$, then
\begin{equation}\label{plane-kuranishi-bound}
\mathrm{h}^1(\mathcal{E}\!nd_0(\Ee_{2,d}))-\mathrm{h}^2(\mathcal{E}\!nd_0(\Ee_{2,d}))=\mathrm{h}^0(\mathcal{E}\!nd_0(\Ee_{2,d}))+3d(d-2),
\end{equation}
so every fixed-determinant Kuranishi space has local dimension at least $3d(d-2)$.
\end{enumerate}
\end{theorem}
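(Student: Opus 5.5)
The plan is to treat the cases $d=1$, $d=2$ and $d\geq 3$ separately. In the first two cases the logarithmic tangent bundle can be identified explicitly, and rigidity then follows from Bott vanishing. For $d\geq3$ the plan is to produce many nonzero classes in $\mathrm{H}^1$ from a Koszul-type presentation of $\Ee_{n,d}$ by the partial derivatives of the defining equation.

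\emph{The cases $d=1,2$.} For a hyperplane $H_0=\{x_0=0\}$, the logarithmic tangent sequence \eqref{log-tangent} combined with the Euler sequence identifies $\Ee_{n,1}$ with the bundle of vector fields tangent to $H_0$. On $\PP^n\setminus H_0\cong\AA^n$ the translations $\partial/\partial(x_i/x_0)$ extend with a simple pole structure that gives $n$ sections of $\Ee_{n,1}(-1)$ which are independent everywhere, so $\Ee_{n,1}\cong\Oo_{\PP^n}(1)^{\oplus n}$. Then $\mathcal E\!nd_0(\Ee_{n,1})\cong\Oo^{\oplus(n^2-1)}$, and its $\mathrm{H}^1$ vanishes. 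For a smooth quadric I would invoke the classical identification $\Tt_{\PP^n}(-\log D_2)\cong\Omega^1_{\PP^n}(2)$ of Dolgachev--Kapranov \cite{DK}. One can also check it directly, since the Chern polynomials agree and the polarity defined by the quadric gives an explicit map. Then
\[
\mathcal E\!nd_0(\Omega^1(2))=\mathcal E\!nd_0(\Tt_{\PP^n}),
\]
and $\mathrm{H}^1(\Tt_{\PP^n}\otimes\Omega^1_{\PP^n})=0$ by Bott's formula, together with $\mathrm{H}^1(\Oo)=0$, gives rigidity.

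\emph{Presentation for $d\geq3$.} Let $f$ be the equation of $D_d$. Since $D_d$ is smooth, the partials $\partial_0f,\dots,\partial_nf$ have no common zero, so
\[
(\partial_i f)\colon\Oo(1)^{n+1}\to\Oo(d)
\]
is surjective. Via the Euler sequence, its kernel is identified with $\Ee_{n,d}$. The Euler field maps to $d\cdot f$, and one checks that $\Ee_{n,d}$ is exactly the kernel; this is the standard description from \cite{DK}. Dualizing gives
\[
0\to\Oo(-d)\to\Oo(-1)^{n+1}\to\Ee_{n,d}^\vee\to0 .
\]
I would apply $\Hom(\Ee_{n,d},-)$ to $0\to\Ee\to\Oo(1)^{n+1}\to\Oo(d)\to0$. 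This yields
\[
\mathrm{H}^0(\Ee^\vee(1))^{n+1}\to\mathrm{H}^0(\Ee^\vee(d))\to\mathrm{H}^1(\mathcal E\!nd\,\Ee),
\]
so $\mathrm{h}^1(\mathcal E\!nd\,\Ee)$ is at least the dimension of the cokernel. From the dual sequence, using $\mathrm{H}^1(\Oo(k))=0$, we get
\[
\mathrm{h}^0(\Ee^\vee(d))=(n+1)\binom{n+d-1}{n}-1 ,\qquad \mathrm{h}^0(\Ee^\vee(1))=n+1 .
\]
Hence the cokernel has dimension at least
\[
(n+1)\binom{n+d-1}{n}-1-(n+1)^2 .
\]
Since $\mathrm{H}^1(\Oo_{\PP^n})=0$, all of this lies in $\mathrm{H}^1(\mathcal E\!nd_0)$.

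The main obstacle is matching the exact constant in \eqref{ext-lower-bound}. The crude count above is off by one, so I expect to sharpen it in one of two ways. The first is to use $\mathrm{H}^0(\mathcal E\!nd\,\Ee)$: the identity endomorphism is absorbed in the first map, so the kernel of the connecting map is controlled. The second is to use the vanishing $\mathrm{H}^1(\Ee^\vee(1))=0$, which follows from $\mathrm{H}^2(\Oo(1-d))=0$ for $n\geq3$; this is where $n\geq3$ enters. Either way, positivity for $d\geq3$ is clear, since $\binom{n+2}{n}>n+1$.

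\emph{The case $n=2$.} Here \eqref{plane-chi} gives $\chi(\mathcal E\!nd_0(\Ee_{2,d}))=-3d(d-2)$. Rearranging $\chi=\mathrm{h}^0-\mathrm{h}^1+\mathrm{h}^2$ yields \eqref{plane-kuranishi-bound}. Proposition~\ref{kuranishi} then gives the dimension bound $3d(d-2)>0$, and in particular $\mathrm{h}^1>0$, which completes the ``only if'' direction.
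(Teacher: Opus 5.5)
Your proposal is correct and follows the paper's proof essentially step for step: the gradient resolution, the identifications $\Ee_{n,1}\cong\Oo_{\PP^n}(1)^{\oplus n}$ and $\Ee_{n,2}\cong\Omega^1_{\PP^n}(2)$ with Bott vanishing, applying $\Hom(\Ee_{n,d},-)$ to the resolution for $n\geq3$, and the Riemann--Roch value $-3d(d-2)$ with Proposition~\ref{kuranishi} for $n=2$. Of your two proposed fixes for the off-by-one, only the first one works. The identity lies in $\mathrm{H}^0(\mathcal{E}\!nd\,\Ee_{n,d})$, which injects into $\mathrm{H}^0(\Ee_{n,d}^\vee(1))^{\oplus(n+1)}$ and maps to zero in $\mathrm{H}^0(\Ee_{n,d}^\vee(d))$; this is exactly the paper's term $\mathrm{h}^0(\mathcal{E}\!nd(\Ee_{n,d}))\geq1$. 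The vanishing of $\Ext^1(\Ee_{n,d},\Oo_{\PP^n}(1))$ does not remove the missing $1$ by itself; in the paper it only turns the inequality into an exact formula for $\mathrm{h}^1$.
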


\begin{proof}
Let $f\in \mathrm{H}^0(\PP^n,\Oo_{\PP^n}(d))$ define $D_d$. Since $D_d$ is smooth, its partial derivatives have no common zero in
$\PP^n$, and the logarithmic tangent bundle therefore has the gradient resolution
\begin{equation}\label{grad-res}
0\to \Ee_{n,d} \to \Oo_{\PP^n}(1)^{\oplus(n+1)} \xrightarrow{(\partial f/\partial x_0,\ldots,\partial f/\partial x_n)}
 \Oo_{\PP^n}(d)\to 0.
\end{equation}
For $d=1$, a projective change of coordinates makes the gradient a nonzero constant vector, and \eqref{grad-res} gives $\Ee_{n,1}\cong \Oo_{\PP^n}(1)^{\oplus n}$. For $d=2$, the gradient consists of independent linear forms. After a linear change of coordinates, \eqref{grad-res} is the Euler sequence twisted by $\Oo_{\PP^n}(1)$, and hence $\Ee_{n,2}\cong \Omega_{\PP^n}^1(2)$; see \cite{DK, Angelini}.

The bundle $\Ee_{n,1}$ is a direct sum of copies of one line bundle, so its trace-free endomorphism bundle is trivial and has vanishing first cohomology. The second is a twist of the exceptional bundle $\Omega_{\PP^n}^1$; equivalently, Bott's theorem applied to its endomorphism bundle gives
\[
\mathrm{H}^1\bigl(\PP^n,\mathcal{E}\!nd_0(\Omega_{\PP^n}^1(2))\bigr)=0.
\]

\noindent Suppose now that $n\geq3$ and $d\geq3$. Dualizing \eqref{grad-res} and twisting first by $\Oo_{\PP^n}(1)$
and then by $\Oo_{\PP^n}(d-1)$ gives
\begin{align*}
0&\to \Oo_{\PP^n}(1-d)\to\Oo_{\PP^n}^{\oplus(n+1)}\to \Ee_{n,d}^\vee (1)\to 0;\smallskip \\
0&\to \Oo_{\PP^n}\to \Oo_{\PP^n}(d-1)^{\oplus(n+1)}\to \Ee_{n,d}^\vee (d)\to 0.
\end{align*}
The intermediate cohomology of line bundles on $\PP^n$ vanishes so that we obtain
\begin{align*}
\mathrm{h}^0(\Ee_{n,d}^\vee(1))=n+1,\qquad \mathrm{h}^0(\Ee_{n,d}^\vee(d))=(n+1)N-1,\qquad \Ext_{\PP^n}^1(\Ee_{n,d},\Oo_{\PP^n}(1))=0,
\end{align*}
where $N=\binom{n+d-1}{n}$. Applying $\Hom_{\PP^n}(\Ee_{n,d},-)$ to \eqref{grad-res} consequently yields
\begin{align*}
\mathrm{h}^1(\mathcal{E}\!nd_0(\Ee_{n,d}))=\mathrm{h}^1(\mathcal{E}\!nd(\Ee_{n,d}))&=(n+1)N-1-(n+1)^2+\mathrm{h}^0(\mathcal{E}\!nd(\Ee_{n,d}))\\
&\geq(n+1)\bigl(N-(n+1)\bigr)>0
\end{align*}
It remains to treat $n=2$ and $d\geq3$. Formula \eqref{plane-chi} is strictly negative in this range, so Corollary~\ref{numerical} gives a genuine positive-dimensional deformation. Moreover, \eqref{plane-chi} gives the lower bound $3d(d-2)$ for the local dimension by Proposition~\ref{kuranishi}.
\end{proof}

\begin{remark}
If $n=1$, a smooth divisor of degree $d$ is a reduced collection of $d$ points and
\[
\Tt_{\PP^1}(-\log D_d)\cong \Oo_{\PP^1}(2-d).
\]
This has rank one, so its trace-free endomorphism bundle is zero for every $d$. Thus the theorem necessarily begins in dimension two.
\end{remark}

\begin{remark}
Deforming the equation of $D_d$ produces a family of logarithmic bundles, but the map from embedded deformations of the curve to the deformation space of $\Ee_{n,d}$ can have a kernel: distinct divisors may have isomorphic logarithmic bundles. This is the {\it logarithmic Torelli problem}. The numerical lower bound above concerns all bundle deformations, not only those induced by moving $D_d$.
\end{remark}

\subsection{Hirzebruch surface}
Let $\FF_e=\PP (\Oo_{\PP^1}\oplus\Oo_{\PP^1}(e))$ with $e\geq 0$, and denote by $C_0$ its section with $C_0^2=-e$ and by $f$ the class of a fiber. Thus we have $K_{\FF_e}=-2C_0-(e+2)f$. 

\begin{theorem}\label{hirzebruch-classification}
For a nonempty smooth irreducible divisor $D\subset\mathbb F_e$, we have 
\[
\mathrm{H}^1\bigl(\FF_e,\mathcal{E}\!nd_0(\Tt_{\FF_e}(-\log D))\bigr)=0
\]
if and only if $e\geq1$ and $D=C_0$ is the negative section. For $D=C_0$ one has, more precisely,
\[
\mathrm{h}^0(\FF_e,\mathcal{E}\!nd_0(\Tt_{\FF_e}(-\log C_0)))=e-1, \qquad \mathrm{h}^i(\FF_e,\mathcal{E}\!nd_0(\Tt_{\FF_e}(-\log C_0)))=0, \qquad \text{for }~i=1,2.
\]
\end{theorem}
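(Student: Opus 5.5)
The plan is to split the argument into a numerical part and a local computation for $D=C_0$. For the numerical part, I will write $D\equiv aC_0+bf$. On $\FF_e$ we have $K^2=8$, $c_2=4$, $K\cdot C_0=e-2$, $K\cdot f=-2$ and $D^2=-a^2e+2ab$. Theorem~\ref{RR} then gives
\[
\chi\bigl(\FF_e,\mathcal{E}\!nd_0(\Tt_{\FF_e}(-\log D))\bigr)=-5-2a(e-2)+4b+3a^2e-6ab .
\]
The smooth irreducible classes are of three kinds: $C_0$ itself ($a=1,b=0$); the fiber $f$ ($a=0,b=1$); and classes with $a\ge1$, $b\ge ae$ (for $e=0$, any $a,b\ge0$ not both zero).

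I will check that $\chi<0$ in every case except $D=C_0$ with $e\ge1$. For $f$ the value is $-1$. For $a\ge1$ the coefficient $4-6a$ of $b$ is negative, so $\chi$ is maximized at the smallest admissible $b$. For $e\ge1$ and $b=ae$ this gives $-5+4a+2ae-3a^2e$. This equals $-1-e$ when $a=1$. When $a\ge2$ it is at most $-5+6a-3a^2<0$. For $e=0$ one checks directly that $-5+4a+4b-6ab<0$ (for instance it equals $-1$ for a fiber of either ruling and $-7$ for $a=b=1$). Corollary~\ref{numerical}(1) then gives a genuine positive-dimensional deformation in all these cases, including $(\FF_0,C_0)$. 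For $D=C_0$ with $e\ge1$ the formula gives $\chi=e-1$.

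For $D=C_0$, I will obtain an explicit description of $\Ee=\Tt_{\FF_e}(-\log C_0)$ from the relative tangent sequence of $\pi\colon\FF_e\to\PP^1$. Since $C_0$ is a section transverse to the fibers, vector fields tangent to $C_0$ give
\[
0\to \Oo_{\FF_e}(C_0+ef)\to \Ee\to \pi^*\Tt_{\PP^1}=\Oo_{\FF_e}(2f)\to0,
\]
using $\Tt_{\FF_e/\PP^1}=\Oo(2C_0+ef)$. As a check, $c_2(\Ee)=2$, which agrees with \eqref{chern}. The extension class lies in $\mathrm{H}^1(\Oo(C_0+(e-2)f))=\mathrm{H}^1\bigl(\PP^1,\Oo(e-2)\oplus\Oo(-2)\bigr)\cong\CC$.

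I claim this class is nonzero. If the sequence split, the summand $\Oo(C_0+(e-2)f)$ of $\mathcal{E}\!nd_0$ would contribute $\mathrm{h}^1=1$, contradicting the statement. To prove nonsplitting directly, I will restrict to $C_0$. There $\Ee_{|C_0}$ surjects onto $\Tt_{C_0}=\Oo(2)$ with kernel $\Oo_{C_0}$ (the residue part). Comparing with the restricted sequence $0\to\Oo_{C_0}(-e+e)\to\Ee_{|C_0}\to\Oo(2)\to0$, I will show that the $\Oo(2f)$ quotient cannot lift, because no horizontal vector field along $C_0$ can be tangent to $C_0$ and compatible with the conormal twist.

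Once nonsplitting is known, I will compute the cohomology of $\mathcal{E}\!nd_0(\Ee)$ through the filtration induced by the extension. Its graded pieces are $L=\Oo(C_0+(e-2)f)$, $\Oo$, and $L^{-1}=\Oo(-C_0-(e-2)f)$. Here $\mathrm{h}^\bullet(L)=(e-1,1,0)$, and $\mathrm{h}^\bullet(L^{-1})$ is obtained from $\pi_*$ and $R^1\pi_*$ of $\Oo(-C_0)$: since $\pi_*\Oo(-C_0)=0$ and $R^1\pi_*\Oo(-C_0)=0$, all cohomology of $L^{-1}$ vanishes. The expected outcome is that the connecting homomorphism $\mathrm{H}^0(\Oo)\to\mathrm{H}^1(L)$, given by cup product with the nonzero extension class, is an isomorphism $\CC\to\CC$. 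This kills both the identity section and the $\mathrm{H}^1$ coming from $L$, which leaves $\mathrm{h}^0=e-1$ and $\mathrm{h}^1=\mathrm{h}^2=0$, consistent with $\chi=e-1$.

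The main obstacle is this last step together with the nonsplitting claim. It requires a careful identification of the extension class and of the coboundary maps in the two-step filtration of $\mathcal{E}\!nd_0(\Ee)$.
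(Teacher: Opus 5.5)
Your numerical half is the paper's argument, with two small slips. On $\FF_0$ the value at $a=b=1$ is $-3$, not $-7$. Also, your parenthetical ``any $a,b\geq0$'' must be cut down by irreducibility, since for instance $a=0$, $b=2$ gives $+3$.

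For $D=C_0$ you take a different route to the cohomology. The paper gets $\mathrm{h}^0=e-1$ from $\Hom(\Oo(C_0+ef),\Oo(2f))=0$ together with the remark that the two diagonal scalars of an endomorphism must agree. It gets $\mathrm{h}^2=0$ by restricting $\mathcal{E}\!nd_0(\Ee)\otimes K_{\FF_e}$ to a fiber, and only then reads off $\mathrm{h}^1=0$ from $\chi=e-1$. You instead run the filtration $L\subset F^0\subset\mathcal{E}\!nd_0(\Ee)$, where $F^0$ is the sheaf of trace-free endomorphisms preserving $\Oo(C_0+ef)$. This gives all three numbers at once, and Riemann--Roch becomes only a check. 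It works, and the step you leave as ``expected'' is true. Let $\sigma_i$ be local splittings with $\sigma_i-\sigma_j=s_{ij}$ representing the extension class $\xi$, and let $p$ be the projection onto $\Oo(2f)$. The local endomorphisms $\phi_i=\mathrm{diag}(1,-1)$ relative to $\sigma_i$ satisfy $\phi_i-\phi_j=-2\,s_{ij}\circ p$. So $0\to L\to F^0\to\Oo_{\FF_e}\to0$ has class $-2\xi$, and $\partial$ is an isomorphism exactly when $\xi\neq0$. This is the content of the paper's ``diagonal scalars agree''.

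The real gap is the nonsplitting $\xi\neq0$, on which everything hinges: if $\xi=0$ you would get $\mathrm{h}^0=e$ and $\mathrm{h}^1=1$. Your first argument is circular. Your second never says why the restricted extension is nonsplit; ``compatible with the conormal twist'' is not an argument. There is also nothing to compare, because the restriction of the relative sequence to $C_0$ \emph{is} the residue sequence $0\to\Oo_{C_0}\to\Ee_{|C_0}\to\Tt_{C_0}\to0$.

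The missing fact is that this is the Atiyah extension of the normal bundle. Take fiber-linear charts with $C_0=(x_i=0)$ and $x_j=g_{ij}(t)x_i$. The local lifts $\partial_{t_i}$ differ on overlaps by $d\log(g_{ij})\otimes x\partial_x$, so the class is a nonzero multiple of $c_1(N_{C_0/\FF_e})$, whose degree is $C_0^2=-e\neq0$. Hence $\Ee_{|C_0}\cong\Oo_{\PP^1}(1)^{\oplus2}$. Since the restricted class is the image of $\xi$, you get $\xi\neq0$; this is the same $d\log$ cocycle the paper uses in Proposition~\ref{irr-ruled-etale-multisection}.

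Alternatively, use the paper's argument: push out along $\Tt_{\FF_e/\PP^1}(-C_0)\hookrightarrow\Tt_{\FF_e/\PP^1}$. A splitting would then split the ordinary relative tangent sequence, i.e.\ give a holomorphic connection on $\FF_e\to\PP^1$. Since $\PP^1$ is simply connected, that would trivialize the $\PP^1$-bundle and force $e=0$.
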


\begin{proof}
Write $D\equiv aC_0+bf$, where $a=D\cdot f\geq0$. The logarithmic Riemann--Roch formula becomes
\begin{equation}\label{hirzebruch-chi}
\chi\bigl(\mathcal{E}\!nd_0(\Tt_{\FF_e}(-\log D))\bigr)=4a-5+(3a^2-2a)e+(4-6a)b.
\end{equation}
If $a=0$, irreducibility and reducedness imply that $D$ is a fiber, so $b=1$ and \eqref{hirzebruch-chi} equals $-1$. Suppose that $a\geq1$. If $D\neq C_0$, then the two distinct irreducible curves $D$ and $C_0$ have nonnegative intersection. Hence $b-ae=D\cdot C_0\geq0$. For $e\geq1$, since $4-6a<0$, \eqref{hirzebruch-chi} gives
\begin{align*}
\chi\bigl(\mathcal{E}\!nd_0(\Tt_{\FF_e}(-\log D))\bigr)&\leq 4a-5+(3a^2-2a)e+(4-6a)ae\\
&=4a-5-ea(3a-2)<0:
\end{align*}
for $a=1$ the last expression is $-1-e$, while for $a\geq2$ it is at most $-3(a-1)^2-2$. If $e=0$, a smooth irreducible divisor with $a=0$ or $b=0$ is a fiber of one of the two rulings, and its Euler characteristic is $-1$. If $a,b\geq1$, then
\[
\chi\bigl(\mathcal{E}\!nd_0(\Tt_{\FF_e}(-\log D))\bigr)=4a-5+(4-6a)b<0.
\]
Thus every case except possibly $e\geq1$ and $D=C_0$ has negative Euler characteristic, and Corollary~\ref{numerical} gives $\mathrm{H}^1\bigl(\mathcal{E}\!nd_0(\Tt_{\FF_e}(-\log D))\bigr)\neq0$.

It remains to calculate the exceptional case. Let $\pi\colon\FF_e\rightarrow \PP^1$ be the ruling and assume $e\geq1$. Projection to the base gives the logarithmic relative tangent sequence
\begin{equation}\label{hirzebruch-log-relative}
0\to \Oo_{\FF_e}(C_0+ef)\to \Tt_{\FF_e}(-\log C_0) \to \Oo_{\FF_e}(2f) \to 0.
\end{equation}
Indeed, $\Tt_{\FF_e/\PP^1}=\Oo_{\FF_e}(2C_0+ef)$, and relative vector fields tangent to $C_0$ vanish normally along $C_0$. The extension group is
\begin{align*}
\Ext_{\FF_e}^1(\Oo_{\FF_e}(2f), \Oo_{\FF_e}(C_0+ef))&\cong \mathrm{H}^1\bigl(\FF_e,\Oo_{\FF_e}(C_0+(e-2)f)\bigr)\\
&\cong \mathrm{H}^1\bigl(\PP^1, \Oo_{\PP^1}(e-2)\oplus\Oo_{\PP^1}(-2)\bigr) \cong \CC.
\end{align*}
The extension \eqref{hirzebruch-log-relative} is nonsplit. Otherwise its pushout by $\Tt_{\FF_e/\PP^1}(-C_0)\hookrightarrow \Tt_{\FF_e/\PP^1}$ would split the ordinary relative tangent sequence
\[
0\to \Tt_{\FF_e/\PP^1} \to \Tt_{\FF_e} \to \pi^* \Tt_{\PP^1} \to 0.
\]
This would give the nontrivial ruled surface $\FF_e$ a holomorphic connection over $\PP^1$, which is impossible for $e>0$. Hence \eqref{hirzebruch-log-relative} represents the unique nonzero extension class up to scale.

Now the restriction to a fiber shows
\[
\Hom_{\FF_e}(\Oo_{\FF_e}(C_0+ef),\Oo_{\FF_e}(2f))\cong \mathrm{H}^0(\FF_e,\Oo_{\FF_e}(-C_0+(2-e)f))=0,
\]
whereas
\begin{align*}
\Hom_{\FF_e}(\Oo_{\FF_e}(2f), \Oo_{\FF_e}(C_0+ef))\cong \mathrm{H}^0\bigl(\FF_e,\Oo_{\FF_e}(C_0+(e-2)f)\bigr) \cong \mathrm{H}^0\bigl(\PP^1, \Oo_{\PP^1}(e-2)\oplus\Oo_{\PP^1}(-2)\bigr),
\end{align*}
which has dimension $e-1$. Every endomorphism of $\Tt_{\FF_e}(-\log C_0)$ preserves $\Oo_{\FF_e}(C_0+ef)$. Its two diagonal scalars must agree because the extension class is nonzero; conversely, every element of $\Hom_{\FF_e}(\Oo_{\FF_e}(2f), \Oo_{\FF_e}(C_0+ef))$ gives a nilpotent endomorphism. Therefore
\[
\mathrm{h}^0\bigl(\mathcal{E}\!nd(\Tt_{\FF_e}(-\log C_0))\bigr)=1+(e-1)=e, \qquad \mathrm{h}^0\bigl(\mathcal{E}\!nd_0(\Tt_{\FF_e}(-\log C_0))\bigr)=e-1.
\]
Finally, on every fiber $f\cong\PP^1$,
\[
\Tt_{\FF_e}(-\log C_0)_{|f}\cong \Oo_{\PP^1}(1)\oplus\Oo_{\PP^1}, \qquad (K_{\FF_e})_{|f}\cong \Oo_{\PP^1}(-2).
\]
Consequently,
\[
 (\mathcal{E}\!nd_0(\Tt_{\FF_e}(-\log C_0))\otimes K_{\FF_e})_{|f}\cong \Oo_{\PP^1}(-1)\oplus\Oo_{\PP^1}(-2)\oplus\Oo_{\PP^1}(-3),
\]
which has no sections. Thus, by Serre duality,
\[
\mathrm{h}^2\bigl(\mathcal{E}\!nd_0(\Tt_{\FF_e}(-\log C_0))\bigr)=\mathrm{h}^0\bigl(\mathcal{E}\!nd_0(\Tt_{\FF_e}(-\log C_0))\otimes K_{\FF_e}\bigr)=0.
\]
Equation \eqref{hirzebruch-chi} gives $\chi(\mathcal{E}\!nd_0(\Tt_{\FF_e}(-\log C_0)))=e-1$. Hence
\[
\mathrm{h}^1\bigl(\mathcal{E}\!nd_0(\Tt_{\FF_e}(-\log C_0))\bigr)=(e-1)+0-(e-1)=0,
\]
as required.
\end{proof}

\begin{corollary}\label{rational-compatible-classification}
Let $(Y,D_Y)$ be a smooth rational surface pair with nonempty smooth irreducible boundary. Assume that there is a sequence of $(-1)$-curve contractions
\[
(Y,D_Y)=(Y_n,D_n)\to\cdots\to (Y_0,D_0)
\]
such that every $D_i$ is smooth and $Y_0$ is either $\PP^2$ or $\FF_e$. Then
\[
\mathrm{H}^1\bigl(Y,\mathcal{E}\!nd_0(\Tt_Y(-\log D_Y))\bigr)=0
\]
if and only if $n=0$ and one of the following holds:
\[
(Y,D_Y)\cong(\PP^2,L),\qquad (Y,D_Y)\cong(\PP^2,Q),\qquad (Y,D_Y)\cong(\FF_e,C_0)\quad \text{for }e\geq1,
\]
where $L$ is a line and $Q$ is a smooth conic.
\end{corollary}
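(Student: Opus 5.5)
The corollary should follow by combining the two rigidity classifications on minimal rational models, Theorem~\ref{proj-hypersf} for $n=2$ and Theorem~\ref{hirzebruch-classification}, with the blow-up monotonicity of Theorem~\ref{main-blowup}(1). The case $n=0$ is immediate. If $Y_0=\PP^2$, Theorem~\ref{proj-hypersf} says $\mathrm{H}^1$ vanishes exactly when $D_0$ has degree $1$ or $2$, i.e. $D_0$ is a line or a smooth conic. If $Y_0=\FF_e$, Theorem~\ref{hirzebruch-classification} says it vanishes exactly for $e\ge1$ and $D_0=C_0$. This gives the displayed list, including $(\FF_1,C_0)$.

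Now let $n\ge1$. We must show $\mathrm{H}^1(Y,\mathcal{E}\!nd_0(\Tt_Y(-\log D_Y)))\neq0$.

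If $(Y_0,D_0)$ is already nonrigid, Theorem~\ref{main-blowup}(1) gives the inequality $\mathrm{h}^1(Y)\ge\mathrm{h}^1(Y_0)>0$, since the hypotheses hold: every $D_i$ is the reduced strict transform and is smooth.

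If $(Y_0,D_0)$ is one of the rigid pairs, I will verify $\mathrm{H}^2(Y_0,\mathcal{E}\!nd_0)=0$, so that the strict form of Theorem~\ref{main-blowup}(1) gives $\mathrm{h}^1(Y)>\mathrm{h}^1(Y_0)=0$.
\begin{itemize}
\item For $(\FF_e,C_0)$ with $e\ge1$, the vanishing of $\mathrm{H}^2$ is part of Theorem~\ref{hirzebruch-classification}.
\item For $(\PP^2,L)$, the bundle is $\Oo(1)^{\oplus2}$. So $\mathcal{E}\!nd_0\cong\Oo^{\oplus3}$, and $\mathrm{H}^2(\PP^2,\Oo)=0$.
\item For $(\PP^2,Q)$, the bundle is $\Omega^1_{\PP^2}(2)$. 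Serre duality gives $\mathrm{h}^2(\mathcal{E}\!nd_0)=\mathrm{h}^0(\mathcal{E}\!nd_0(\Omega^1)(-3))$. This vanishes because $\Omega^1$ is stable, hence $\mathcal{E}\!nd_0(\Omega^1)$ is semistable of slope $0$, and a negative twist has no sections. Alternatively, Bott vanishing on $\Omega^1\otimes \Tt_{\PP^2}(-3)$ applied via the Euler sequence gives the same result.
\end{itemize}

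The main point to watch is that Theorem~\ref{main-blowup}(1) is stated for a nonempty sequence of blow-ups with every $D_i$ smooth. That is exactly the hypothesis here, with the contractions read in reverse as blow-ups.

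One wrinkle is that a blow-up centre may lie off $D_{i-1}$; the strict transform of $D_{i-1}$ is then just its pullback. The theorem as stated allows this, requiring only that the strict transforms remain smooth, so no extra argument should be needed.

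The step I expect to be the real content is the $\mathrm{H}^2$ vanishing for the conic case. If the stability argument feels too quick, I would instead compute $\chi=0$ from \eqref{plane-chi}, and use $\mathrm{h}^0(\mathcal{E}\!nd_0)=0$ (simplicity of $\Omega^1$) together with $\mathrm{h}^1=0$. These give $\mathrm{h}^2=\chi-\mathrm{h}^0+\mathrm{h}^1=0$ directly.
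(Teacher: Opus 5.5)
Your proposal is correct and follows the paper's own argument. You settle $n=0$ by Theorems~\ref{proj-hypersf} and~\ref{hirzebruch-classification}, propagate nonrigidity by the monotonicity of Theorem~\ref{log-blowup-comparison}, and for the three rigid models use $\mathrm{H}^2(\mathcal{E}\!nd_0)=0$ to get a strict increase at the first blow-up (Corollary~\ref{compatible-blowups}). Your explicit checks of that $\mathrm{H}^2$-vanishing for $(\PP^2,L)$ and $(\PP^2,Q)$ fill in a step the paper only attributes to the proof of Theorem~\ref{proj-hypersf}: trivial $\mathcal{E}\!nd_0$ in the line case, and in the conic case either Serre duality with semistability or $\chi=0$ with simplicity of $\Omega^1_{\PP^2}$.
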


\begin{proof}
For $n=0$, use Theorems~\ref{proj-hypersf} and \ref{hirzebruch-classification}. If the model pair is already nonrigid, monotonicity in Theorem~\ref{log-blowup-comparison} preserves a nonzero first cohomology group through the sequence. For each of the three rigid model pairs, the proof of Theorems~\ref{proj-hypersf} and \ref{hirzebruch-classification} gives $\mathrm{H}^2(\mathcal{E}\!nd_0(\Tt_{Y_0}(-\log D_0)))=0$. Hence the first blow-up is nonrigid by Corollary~\ref{compatible-blowups}, and nonrigidity again persists.
\end{proof}

\begin{remark}
The hypothesis that all $D_i$ remain smooth is substantive. Contracting a $(-1)$-curve that meets $D_i$ twice, or is tangent to it, produces a singular image boundary. Completing the classification for arbitrary nonminimal rational pairs therefore requires the generalized logarithmic sheaf for the resulting singular curve and cannot be deduced from the smooth-pair comparison alone.
\end{remark}


\subsection{Irrational ruled surface}
Let $f\colon X=\PP_C(\Vv)\rightarrow C$ be a minimal ruled surface over a curve of genus $g\geq1$. Choose a normalized section $C_0$ with $C_0^2=-e$ and let $F$ denote a fiber. Thus
\[
K_X=-2C_0+(2g-2-e)F,\qquad c_2(X)=4-4g.
\]
Write a smooth irreducible divisor as $D\equiv aC_0+bF$, where $a=D\cdot F\geq0$. The logarithmic Riemann--Roch formula becomes 
\begin{equation}\label{irr-ruled-chi}
\chi\bigl(\mathcal{E}\!nd_0(\Tt_X(-\log D))\bigr)=(5-4a)(g-1)+ea(3a-2)+(4-6a)b.
\end{equation}
Indeed, the intersection table is
\[
C_0^2=-e,\qquad C_0\cdot F=1,\qquad F^2=0.
\]
It gives
\begin{align*}
 K_X\cdot D
 &=(-2C_0+(2g-2-e)F)\cdot(aC_0+bF)\\
 &=a(e+2g-2)-2b,\\
 D^2&=-ea^2+2ab.
\end{align*}
Furthermore $K_X^2=8(1-g)$ and $c_2(X)=4(1-g)$, so
\[
\frac54(K_X^2-3c_2(X))=5(g-1).
\]
Substitution of these three identities into the formula of Theorem~\ref{RR}, followed by collecting the coefficients of $g-1$, $e$, and $b$, is exactly \eqref{irr-ruled-chi}.



We first record the relative logarithmic tangent sequence of the
ruling. Its ramification term will also be used in the multisection
case.
\begin{lemma}\label{relative-logarithmic-sequence}
Let $f\colon X\to C$ be a ruled surface, and let $D\subset X$ be a smooth horizontal irreducible curve. Let $Z\subset D$ be the ramification subscheme of the finite morphism $f_{|D}\colon D\rightarrow C$, viewed as a zero-dimensional subscheme of $X$. Then the differential of $f$ induces an exact sequence
\begin{equation}\label{irr-ruled-relative-log}
0\to\Tt_{X/C}(-D) \to\Tt_X(-\log D) \to f^*\Tt_C\otimes\Ii_{Z,X} \to0.
\end{equation}
Moreover, $\mathrm{length}(Z)=\deg R$, where $R$ is the ramification divisor of $f_{|D}$.
\end{lemma}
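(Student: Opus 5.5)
The plan is to restrict the differential $df\colon \Tt_X\to f^*\Tt_C$ to the subsheaf $\Tt_X(-\log D)$ and to identify its kernel and its image separately. The tool throughout is local coordinates at points of $D$.

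For the kernel, observe that $\Tt_X(-\log D)\subset\Tt_X$ is saturated and $\Tt_{X/C}=\ker(df)$. It follows that $\ker(df_{|\Tt_X(-\log D)})=\Tt_{X/C}\cap\Tt_X(-\log D)$, the sheaf of vertical vector fields tangent to $D$.
\begin{itemize}
\item Away from $D$ this intersection is $\Tt_{X/C}$.
\item At a point $x\in D$, choose coordinates $(t,y)$ with $f=t$. Write $D=\{h=0\}$, where $h$ is a local equation of the reduced smooth curve. A vertical field $\phi\,\partial_y$ is tangent to $D$ exactly when $h\mid \phi\,\partial_y h$.
\item Since $D$ is horizontal and irreducible, it contains no fiber component. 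Hence $\partial_y h$ is not divisible by $h$ (otherwise $D$ would be locally contained in the fiber direction).
\item As $h$ is irreducible in the UFD $\Oo_{X,x}$, the condition forces $h\mid\phi$.
\end{itemize}
The kernel is therefore $\Tt_{X/C}(-D)$.

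For the image, the subsheaf $\Tt_X(-\log D)$ is generated locally near $x\in D$ by $h\,\partial_y$ and by any field $v$ with $v(h)\in(h)$. One convenient choice is $v=\partial_y h\,\partial_t-\partial_t h\,\partial_y$ (this is valid since $D$ is smooth), completed with $h\,\partial_t$ and $h\,\partial_y$. Applying $df$ sends these generators to $(\partial_y h,\,h)\cdot\partial_t$. The image is thus $f^*\Tt_C\otimes I$, where $I=(h,\partial_y h)$. I would verify this generation claim carefully; it is the standard fact that $\Tt_X(-\log D)$ is the kernel of $\Tt_X\to\Oo_D(D)$, $v\mapsto v(h)|_D$.

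Next I would identify $I$. The scheme $V(h,\partial_y h)$ is exactly the ramification subscheme of $f_{|D}$:
\begin{itemize}
\item At a point of $D$ where $\partial_y h\neq0$, the restriction $f_{|D}$ is étale.
\item Otherwise $\partial_t h\neq0$, so $D$ is locally a graph $t=\psi(y)$. Then $\Oo_{D,x}/(\partial_y h)\cong \CC\{y\}/(\psi'(y))$, which is $\Oo_D/\Omega_{D/C}$-annihilator data, i.e. $\Oo_R$.
\end{itemize}
So $I=\Ii_{Z,X}$, and $Z\subset D$ has length equal to $\deg R$. Exactness on the right holds by construction of the image, and the whole construction is global because $df$ is.

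The step I expect to be the main obstacle is the careful identification of the ideal $(h,\partial_y h)$ with the ramification scheme, including the length equality. The generator computation for $\Tt_X(-\log D)$ and the kernel computation are routine. To settle the identification, I would work in the graph coordinate $h=t-\psi(y)$, which makes $(h,\partial_y h)=(t-\psi,\psi')$. The quotient by this ideal is $\CC\{y\}/(\psi')$, whose length is the local ramification index minus one. Summing over the ramification points gives $\deg R$.
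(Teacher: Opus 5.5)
Your proposal is correct and is essentially the paper's argument. The paper first normalizes $D$ to $t=x^r$ and solves the tangency condition $B-rx^{r-1}A\in(t-x^r)$ directly. You instead keep a general local equation $h$ and use the generators $h\,\partial_t$, $h\,\partial_y$, $\partial_y h\,\partial_t-\partial_t h\,\partial_y$, which gives the image ideal $(h,\partial_y h)$; in the paper's normal form this ideal is exactly $(t-x^r,x^{r-1})$.

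One harmless slip: $\Tt_X(-\log D)$ is not saturated in $\Tt_X$, since the quotient $\Oo_D(D)$ is torsion. You never need this, because the kernel of $df$ restricted to any subsheaf is just that subsheaf's intersection with $\Tt_{X/C}$.
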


\begin{proof}
Let $p\in D$ be a point at which $f_{|D}$ has ramification index $r$. After choosing suitable analytic coordinates $(x,t)$ on $X$ centered at $p$, we may write
\[
f(x,t)=t,\qquad D=(t-x^r=0).
\]
A local vector field
\[
v=A(x,t)\frac{\partial}{\partial x}  +B(x,t)\frac{\partial}{\partial t}
\]
belongs to $\Tt_X(-\log D)$ if and only if
\[
v(t-x^r)=B-rx^{r-1}A\in(t-x^r).
\]
It follows that $B\in(x^{r-1},t-x^r)$. Conversely, if
\[
B=x^{r-1}C+(t-x^r)H,
\]
then, since the ground field has characteristic zero, taking $A=C/r$ gives
\[
B-rx^{r-1}A=(t-x^r)H.
\]
Thus every local section of
\[
\Ii_{Z,X}\cdot\frac{\partial}{\partial t},
\qquad
\Ii_{Z,X}=(x^{r-1},t-x^r),
\]
is obtained as the base component of a logarithmic vector field.

The kernel consists of logarithmic vector fields with $B=0$. In that case,
\[
rx^{r-1}A\in(t-x^r).
\]
Since $t-x^r$ does not divide $x$, this implies $A\in(t-x^r)$. Thus the vertical vector field vanishes along $D$, and the kernel is precisely $\Tt_{X/C}(-D)$. Finally,
\[
\mathrm{length}\left(\Oo_{X,p}/(x^{r-1},t-x^r)\right)=r-1,
\]
which is the ramification multiplicity at $p$. Summing over the ramification points proves both the exactness of \eqref{irr-ruled-relative-log} and the asserted length of $Z$.
\end{proof}
If the boundary is a fiber $D=F_p$, it is not horizontal, and the corresponding logarithmic sequence is instead
\begin{equation}\label{irr-ruled-fiber-log}
0\to \Tt_{X/C} \to \Tt_X(-\log F_p)\to f^*\Tt_C(-F_p)\to 0.
\end{equation}

\begin{remark}\label{elem-observation}
Consider an extension
\begin{equation}\label{irr-ruled-two-line-extension}
0\to \Ll \to \Ee \to \Mm \to 0
\end{equation}
of line bundles on $X$, and set $\Aa:=\Ll\otimes \Mm^{-1}$. Every homomorphism $\Mm\rightarrow \Ll$ defines the trace-free nilpotent endomorphism $\Ee \rightarrow \Mm \rightarrow \Ll \rightarrow \Ee$. Hence we obtain an injection $\Aa \hookrightarrow \mathcal{E}\!nd_0(\Ee)$. If the extension splits, the diagonal endomorphism $(1,-1)$ supplies one additional independent section. If it does not split, its nonzero extension class belongs to
\[
\Ext_X^1(\Mm,\Ll) \cong \mathrm{H}^1(X,\Aa),
\]
so $\mathrm{h}^1(X,\Aa)\geq1$.

\end{remark}

\begin{proposition}\label{irr-ruled-fibers-sections}
Let $X=\PP_C(\Vv)$ with $g(C)=g\geq1$, and let $D$ be either a fiber or a section of $f$. Then we have
\[
\mathrm{H}^1\bigl(X,\mathcal{E}\!nd_0(\Tt_X(-\log D))\bigr)\neq0.
\]
In fact, $\Tt_X(-\log D)$ has a positive-dimensional fixed-determinant deformation.
\end{proposition}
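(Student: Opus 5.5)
The plan is to treat the two cases separately, in each case using the two-step filtration of $\Ee:=\Tt_X(-\log D)$ coming from the ruling together with Remark~\ref{elem-observation}. The goal is to verify criterion~(3) of Corollary~\ref{numerical}, or more directly $\mathrm{h}^1-\mathrm{h}^2>0$, and then conclude with Proposition~\ref{kuranishi}. In both cases we have $\Tt_{X/C}\cong\Oo_X(2C_0+eF)$, since $-K_X-f^*\Tt_C=2C_0+eF$.

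\emph{Sections.} Let $D\equiv C_0+bF$ be a section. Then $f_{|D}$ is an isomorphism, so $Z=\emptyset$, and Lemma~\ref{relative-logarithmic-sequence} gives
\[
0\to \Tt_{X/C}(-D)\to \Ee\to f^*\Tt_C\to 0 .
\]
Put $\Aa=\Tt_{X/C}(-D)\otimes f^*K_C\equiv C_0+(e-b+2g-2)F$. On a fiber $F$ we get $\Ee_{|F}\cong\Oo_{\PP^1}(1)\oplus\Oo_{\PP^1}$, so $(\mathcal{E}\!nd_0(\Ee)\otimes K_X)_{|F}$ has degrees $-1,-2,-3$. Hence $\mathrm{h}^2(\mathcal{E}\!nd_0(\Ee))=0$, exactly as in the proof of Theorem~\ref{hirzebruch-classification}. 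Likewise $K_X-\Aa$ has negative fiber degree, so $\mathrm{h}^2(\Aa)=0$ and $\mathrm{h}^0(\Aa)=\chi(\Aa)+\mathrm{h}^1(\Aa)$.

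Riemann--Roch for $\Aa$ gives $\chi(\Aa)=2g-2+e-2b$, whereas \eqref{irr-ruled-chi} gives $\chi(\mathcal{E}\!nd_0(\Ee))=g-1+e-2b$. By Remark~\ref{elem-observation}, either the extension splits, which contributes one extra section of $\mathcal{E}\!nd_0(\Ee)$, or $\mathrm{h}^1(\Aa)\ge1$. In both cases
\[
\mathrm{h}^0(\mathcal{E}\!nd_0(\Ee))\ge\chi(\Aa)+1=\chi(\mathcal{E}\!nd_0(\Ee))+g>\chi(\mathcal{E}\!nd_0(\Ee)).
\]
Criterion~(3) of Corollary~\ref{numerical} then applies. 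Since $\mathrm{h}^2=0$, the Kuranishi space is in fact smooth of dimension $\mathrm{h}^1\ge g$.

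\emph{Fibers.} Here $\chi(\mathcal{E}\!nd_0(\Ee))=5g-1>0$, so the Euler characteristic is useless and $\mathrm{h}^2$ need not vanish. I would use \eqref{irr-ruled-fiber-log} with
\[
\Aa=\Tt_{X/C}\otimes f^*(K_C(p))\equiv 2C_0+(e+2g-1)F,
\]
for which Riemann--Roch gives $\chi(\Aa)=3g$ and $\mathrm{h}^2(\Aa)=0$. On a fiber, $\Ee_{|F}\cong\Oo(2)\oplus\Oo$ and $(\mathcal{E}\!nd_0(\Ee)\otimes K_X)_{|F}$ has degrees $0,-2,-4$. The degree-$0$ piece is the image of $\Aa\otimes K_X$. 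Hence every section of $\mathcal{E}\!nd_0(\Ee)\otimes K_X$ factors through the subsheaf $\Aa\otimes K_X\cong f^*N$, where $N$ is a line bundle of degree $4g-3$ on $C$, and therefore
\[
\mathrm{h}^2(\mathcal{E}\!nd_0(\Ee))\le\mathrm{h}^0(C,N).
\]
The main obstacle is to beat $\mathrm{h}^0(\mathcal{E}\!nd_0(\Ee))-\mathrm{h}^2\ge 5g-1+1$.

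To get there, I would push everything down to $C$. Write
\[
f_*\Aa=\bigl(\mathcal{E}\!nd_0(\Vv)\bigr)\otimes K_C(p)
\]
(up to the normalization of $\Vv$), so that $\mathrm{h}^0(\Aa)=3g+\mathrm{h}^1(C,\mathcal{E}\!nd_0(\Vv)\otimes K_C(p))$. Then I would compare, via the long exact sequences on $C$, the map $H^0(C,f_*(\mathcal{E}\!nd_0\Ee\otimes K_X))\to H^0(N)$ with the extension class in $\mathrm{H}^1(\Aa)\cong\mathrm{H}^0(C,\mathcal{E}\!nd_0(\Vv)(-p))^\vee$. The aim is to show that the cokernel of $\mathrm{H}^0(\mathcal{E}\!nd_0\Ee)\to H^0(f^*\Tt_C(-F_p)\otimes\ldots)$ feeds into $\mathrm{H}^1(\mathcal{E}\!nd_0(\Ee))$ with strictly more dimension than the obstruction space. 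If this bookkeeping fails, the fallback is Proposition~\ref{residue-zero} with $A$ a general section meeting $F_p$ transversely. In that approach $\Ee_{|A}$ contains $f^*\Tt_C(-p)_{|A}$ and the vertical part $\Tt_{X/C|A}$, whose degree is $2A^2+e$ and which can be made positive by choosing $A$ ample enough. This produces a nonzero class in $\mathrm{H}^1$; positive-dimensionality would then still have to be checked against $\mathrm{h}^2\le\mathrm{h}^0(N)$.
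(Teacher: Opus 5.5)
Your section case is correct and is essentially the paper's argument. It uses the same $\Aa$, the same fiberwise vanishing of $\mathrm{h}^2$, and the same split/nonsplit dichotomy from Remark~\ref{elem-observation}.

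The fiber case has a genuine gap, and it is not a bookkeeping issue. The inequality $\mathrm{h}^1>\mathrm{h}^2$ you are aiming for is false in general, so Proposition~\ref{kuranishi} cannot deliver positive-dimensionality. Since $\Hom\bigl(\Tt_{X/C},f^*\Tt_C(-F_p)\bigr)=\mathrm{H}^0(\Aa^{-1})=0$, every endomorphism preserves $\Tt_{X/C}$. Hence $\mathrm{h}^0(\mathcal{E}\!nd_0(\Ee))=\mathrm{h}^0(\Aa)+\varepsilon=3g+\mathrm{h}^1(\Aa)+\varepsilon$ exactly. Your own factorization argument gives $\mathrm{h}^2(\mathcal{E}\!nd_0(\Ee))=\mathrm{h}^0(C,N)=3g-2$ exactly. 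Therefore
\[
\mathrm{h}^1-\mathrm{h}^2=\mathrm{h}^0-\chi=\mathrm{h}^1(\Aa)+\varepsilon+1-2g,\qquad \mathrm{h}^1(\Aa)=\mathrm{h}^0\bigl(C,\mathcal{E}\!nd_0(\Vv)(-p)\bigr).
\]
For $X=C\times\PP^1$, or for any stable $\Vv$, one has $\mathrm{h}^1(\Aa)=0$. The sequence then splits, $\varepsilon=1$, and $\mathrm{h}^1=g$ while $\mathrm{h}^2=3g-2$, so the difference is $2-2g\le0$. No comparison on $C$ can change this.

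The residue fallback does not help either. You do not check the $\mathrm{H}^0$-isomorphism hypothesis of Proposition~\ref{residue-zero}, and in any case it would only give a first-order class. For the first assertion ($\mathrm{H}^1\neq0$) you already have what you need, but you must use the lower bound $\mathrm{h}^2\ge3g-2$. This comes from $\Aa\otimes K_X\hookrightarrow\mathcal{E}\!nd_0(\Ee)\otimes K_X$, and it gives $\mathrm{h}^1\ge g-1+\mathrm{h}^1(\Aa)+\varepsilon\ge g$. The upper bound you stated points the wrong way for this purpose.

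The missing idea is to produce a nonisotrivial fixed-determinant family by hand, as the paper does. Move the fiber and compensate the determinant: take $\Tt_X(-\log F_{p(u)})\otimes f^*Q_u$ with $Q_u^{\otimes2}\cong\Oo_C(p(u)-p_0)$. Such $Q_u$ exist over an \'etale neighborhood of $p_0$ because multiplication by $2$ on $\Pic^0(C)$ is \'etale.

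On a general fiber these bundles restrict to $\Oo_{\PP^1}(2)\oplus\Oo_{\PP^1}$. So the saturation $\Ll_u$ of the degree-two summand is intrinsic, with quotient line bundle $\Mm_u$. Hence $\Ll_u\otimes\Mm_u^{-1}\cong\Tt_{X/C}\otimes f^*\bigl(K_C(p(u))\bigr)$ is an isomorphism invariant. Since $f^*$ is injective on $\Pic$ and $g\ge1$, this recovers $p(u)$, so nearby members of the family are pairwise nonisomorphic.
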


\begin{proof}
First suppose that $D=F_p$ is a fiber. \eqref{irr-ruled-fiber-log} has the form \eqref{irr-ruled-two-line-extension} with
\[
\Ll=\Tt_{X/C},\qquad \Mm=f^*\Tt_C(-F_p),\qquad \Aa=\Tt_{X/C}\otimes f^*K_C(F_p).
\]
Numerically, we have
\[
\Aa \equiv 2C_0+(e+2g-1)F, \qquad \Aa+K_X\equiv(4g-3)F,
\]
from which we obtain $\chi(X,\Aa)=3g$ by Riemann--Roch. Moreover, $\Aa_{|F}\cong \Oo_{\PP^1}(2)$, so $\mathrm{H}^2(X,\Aa)=0$ by Serre duality and restriction to a general fiber. Consequently
\[
\mathrm{h}^0(X,\Aa)=3g+\mathrm{h}^1(X,\Aa).
\]
Using $\Tt_{X/C}\cong K_X^{-1}\otimes f^*K_C$, we obtain
\[
\Aa\otimes K_X\cong f^*\bigl(K_C^{\otimes2}(p)\bigr).
\]
The line bundle $K_C^{\otimes2}(p)$ has degree $4g-3>2g-2$. Therefore, Riemann--Roch on $C$ gives
\[
\mathrm{h}^0(X,\Aa\otimes K_X)=\mathrm{h}^0\bigl(C,K_C^{\otimes2}(p)\bigr)=3g-2.
\] 
The injection $\Aa \hookrightarrow\mathcal{E}\!nd_0(\Tt_X(-\log F_p))$ and its twist by $K_X$, together with Serre duality, therefore give
\begin{align*}
\mathrm{h}^0(X,\mathcal{E}\!nd_0(\Tt_X(-\log F_p)))&\geq3g+\mathrm{h}^1(X,\Aa)+\varepsilon,\\
\mathrm{h}^2(X,\mathcal{E}\!nd_0(\Tt_X(-\log F_p)))&\geq3g-2,
\end{align*}
where $\varepsilon=1$ if the extension splits and $\varepsilon=0$ otherwise. Formula \eqref{irr-ruled-chi}, with $a=0$ and $b=1$, gives $\chi(\mathcal{E}\!nd_0(\Tt_X(-\log F_p)))=5g-1$. Hence
\[
\mathrm{h}^1(X,\mathcal{E}\!nd_0(\Tt_X(-\log F_p))) \geq g-1+\mathrm{h}^1(X,\Aa)+\varepsilon.
\]
If the extension is nonsplit, then $\mathrm{h}^1(X,\Aa)\geq1$; if it splits, then $\varepsilon=1$. The right-hand side is therefore positive in either case. This proves that $\Tt_X(-\log F_p)$ is infinitesimally nonrigid. It does not yet prove that its fixed-determinant Kuranishi space is positive-dimensional, because the preceding estimate gives no upper bound for $\mathrm{h}^2(X,\mathcal{E}\!nd_0(\Tt_X(-\log F_p)))$. We therefore construct a nonisotrivial fixed-determinant family directly. 

Consider a map $\alpha\colon C\rightarrow\Pic^0(C)$, defined by $p\mapsto\Oo_C(p-p_0)$. Since multiplication by two on $\Pic^0(C)$ is finite \'etale, there are an \'etale neighborhood $U\to C$ of $p_0$ and an algebraic family of line bundles $Q_u\in\Pic^0(C)$, normalized by $Q_{u_0}\cong \Oo_C$, such that
\[
Q_u^{\otimes2}\cong \Oo_C(p(u)-p_0).
\]
The relative logarithmic tangent bundles of the incidence divisor
\[
\Ff=\{(x,u)\in X\times U \mid f(x)=p(u)\}
\]
form an algebraic family $\Ee_u:=\Tt_X(-\log F_{p(u)})$. Set
\[
\widetilde \Ee_u=\Ee_u\otimes f^*Q_u.
\]
Since $\Ee_u$ has rank two,
\[
\det\widetilde \Ee_u \cong \Oo_X(-K_X-F_{p(u)})
 \otimes f^*(Q_u^{\otimes2}) \cong \Oo_X(-K_X-F_{p_0}),
\]
so $\left\{\widetilde \Ee_u\right\}_{u\in U}$ is a fixed-determinant algebraic family on $X$.

This family is not locally isotrivial. Indeed, for a general ruling fiber $F$ one has
\[
(\widetilde\Ee_u)_{|F} \cong \Oo_{\PP^1}(2)\oplus\Oo_{\PP^1}.
\]
The summand of degree two is the unique line subbundle of maximal degree. Its saturation in $\widetilde\Ee_u$ is therefore determined intrinsically by the isomorphism class of $\widetilde\Ee_u$. This saturation is
\[
\Ll_u:=\Tt_{X/C}\otimes f^*Q_u.
\]
The corresponding quotient line bundle is
\[
\Mm_u:=\widetilde\Ee_u/\Ll_u \cong f^*\Tt_C(-F_{p(u)})\otimes f^*Q_u.
\]
Consequently, the tensor quotient of these two line bundles,
\[
\Ll_u\otimes\Mm_u^{-1} \cong \Tt_{X/C}\otimes f^*K_C(F_{p(u)}),
\]
is determined by the isomorphism class of $\widetilde\Ee_u$. Suppose that $\widetilde\Ee_u\cong\widetilde\Ee_v$. The intrinsic characterization above implies
\[
\Ll_u\otimes\Mm_u^{-1} \cong \Ll_v\otimes\Mm_v^{-1}.
\]
Therefore, $f^*\Oo_C\bigl(p(u)-p(v)\bigr)\cong\Oo_X$. Since $f$ has a section, the pullback homomorphism $f^*\colon\Pic(C)\rightarrow\Pic(X)$ is injective. Hence
\[
\Oo_C\bigl(p(u)-p(v)\bigr)\cong\Oo_C.
\]
For a smooth projective curve of positive genus, this implies $p(u)=p(v)$. After shrinking the \'etale neighborhood $U$ analytically around $u_0$, the map $U\to C$ is injective, and hence $u=v$. Thus the family $\{\widetilde\Ee_u\}_{u\in U}$ is not locally isotrivial and gives a positive-dimensional fixed-determinant deformation of $\Ee_{p_0}$.

Now suppose that $D\equiv C_0+bF$ is a section. Since $f_{|D}\colon D\rightarrow C$ is an isomorphism, its ramification
scheme is empty, and \eqref{irr-ruled-relative-log} becomes
\[
0\to \Tt_{X/C}(-D) \to \Tt_X(-\log D) \to f^*\Tt_C \to0.
\]
Thus, in the notation of Remark~\ref{elem-observation}, we set $\Ll=\Tt_{X/C}(-D)$ and $\Mm=f^*\Tt_C$ to have $\Aa=\Ll\otimes\Mm^{-1} =\Tt_{X/C}(-D)\otimes f^*K_C$. Since $\Tt_{X/C}\equiv2C_0+eF$, we have $\Aa\equiv C_0+(e-b+2g-2)F$. A direct application of surface Riemann--Roch gives
\begin{equation}\label{irr-ruled-section-A-chi}
\chi(X,\Aa)=2g-2+e-2b.
\end{equation}
On the other hand, substituting $a=1$ into
\eqref{irr-ruled-chi} gives
\begin{equation}\label{irr-ruled-section-end-chi}
\chi\bigl(X,\mathcal{E}\!nd_0(\Tt_X(-\log D))\bigr)=g-1+e-2b.
\end{equation}
Since $\Aa_{|F}\cong\Oo_{\PP^1}(1)$, Serre duality and restriction to a general fiber give  $\mathrm{H}^2(X,\Aa)=0$. Consequently, \eqref{irr-ruled-section-A-chi} yields
\[
\mathrm{h}^0(X,\Aa)=2g-2+e-2b+\mathrm{h}^1(X,\Aa).
\]
Restricting the logarithmic tangent sequence to a general fiber $F$ gives an extension
\[
0\to\Oo_{\PP^1}(1) \to\Tt_X(-\log D)_{|F} \to\Oo_{\PP^1} \to0,
\]
which splits, and hence
\[
\Tt_X(-\log D)_{|F} \cong\Oo_{\PP^1}(1)\oplus\Oo_{\PP^1}.
\]
It follows that $\mathcal{E}\!nd_0(\Tt_X(-\log D))\otimes K_X$ restricts to $\Oo_{\PP^1}(-1) \oplus\Oo_{\PP^1}(-2) \oplus\Oo_{\PP^1}(-3)$ on $F$. Therefore,
\[
\mathrm{H}^0\bigl( X,\mathcal{E}\!nd_0(\Tt_X(-\log D))\otimes K_X\bigr)=0,
\]
and Serre duality gives $\mathrm{H}^2\bigl(X,\mathcal{E}\!nd_0(\Tt_X(-\log D))\bigr)=0$. Using the nilpotent sections described in Remark~\ref{elem-observation}, we obtain
\[
\mathrm{h}^0\bigl(X,\mathcal{E}\!nd_0(\Tt_X(-\log D))\bigr)\geq\mathrm{h}^0(X,\Aa)+\varepsilon,
\]
where $\varepsilon=1$ if the extension splits and $\varepsilon=0$ otherwise. Combining this inequality with \eqref{irr-ruled-section-A-chi} and \eqref{irr-ruled-section-end-chi}, we obtain
\[
\mathrm{h}^1\bigl(X,\mathcal{E}\!nd_0(\Tt_X(-\log D))\bigr)\geq g-1+\mathrm{h}^1(X,\Aa)+\varepsilon.
\]
If the extension is nonsplit, then its nonzero extension class lies in
\[
\mathrm{Ext}^1_X(\Mm,\Ll)\cong\mathrm{H}^1(X,\Aa),
\]
so $\mathrm{h}^1(X,\Aa)\geq1$. If the extension splits, then $\varepsilon=1$. Since $g\geq1$, the right-hand side is positive in both cases. Thus
\[
\mathrm{H}^1\bigl( X,\mathcal{E}\!nd_0(\Tt_X(-\log D))\bigr)\neq0.
\]
Moreover, since the corresponding second cohomology group vanishes,
\[
\mathrm{h}^1\bigl(X,\mathcal{E}\!nd_0(\Tt_X(-\log D))\bigr)-\mathrm{h}^2\bigl(X,\mathcal{E}\!nd_0(\Tt_X(-\log D))\bigr)>0.
\]
Hence Proposition~\ref{kuranishi} supplies a positive-dimensional fixed-determinant deformation in the section case.
\end{proof}

Let $D$ be a horizontal multisection of degree $a=D\cdot F\ge2$. The following Riemann--Hurwitz calculation applies for every value of the ruled-surface invariant $e$ and identifies exactly when the Euler characteristic can fail to be negative.

\begin{lemma}\label{irr-ruled-ramification}
Suppose that $a\geq2$, and let $R$ be the ramification divisor of $f_{\ |D}\colon D\rightarrow C$. Putting $\delta:=2b-ea$, we have
\[
\deg R=(a-1)\delta,\qquad D^2=a\delta,
\]
and
\begin{equation}\label{irr-ruled-chi-ramification}
\chi\bigl(\mathcal{E}\!nd_0(\Tt_X(-\log D))\bigr) =(5-4a)(g-1)+(2-3a)\delta.
\end{equation}
In particular, this Euler characteristic is negative unless $g=1$ and $f_{\ |D}$ is \'etale. In the exceptional case, $D^2=0$ and the Euler characteristic is zero.
\end{lemma}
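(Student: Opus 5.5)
The plan is to compute $\deg R$ by adjunction on $D$ combined with Riemann--Hurwitz for $f_{|D}$, and then to substitute the result into \eqref{irr-ruled-chi}.

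\emph{Intersection numbers.} From $D\equiv aC_0+bF$ and the intersection table we get
\[
D^2=-ea^2+2ab=a(2b-ea)=a\delta,
\qquad
K_X\cdot D=a(e+2g-2)-2b=a(2g-2)-\delta.
\]

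\emph{Ramification.} Since $D$ is smooth and irreducible, adjunction gives $2g(D)-2=K_X\cdot D+D^2=a(2g-2)+(a-1)\delta$. Riemann--Hurwitz for the degree-$a$ map $f_{|D}\colon D\to C$ gives $2g(D)-2=a(2g-2)+\deg R$. Comparing the two yields $\deg R=(a-1)\delta$.

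\emph{Euler characteristic.} In \eqref{irr-ruled-chi}, write $ea(3a-2)+(4-6a)b$ as an expression in $\delta$. We have
\[
(4-6a)b=(2-3a)\cdot 2b=(2-3a)(\delta+ea),
\]
so the $e$-terms become $ea(3a-2)+(2-3a)ea=0$. This leaves $(5-4a)(g-1)+(2-3a)\delta$, which is \eqref{irr-ruled-chi-ramification}.

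\emph{Sign analysis.} Here $a\geq2$ and $g\geq1$.
\begin{itemize}
\item Since $5-4a<0$ and $g-1\geq0$, the first term is $\leq0$, and it vanishes exactly when $g=1$.
\item Since $\deg R\geq0$ and $a-1>0$, we have $\delta\geq0$. As $2-3a<0$, the second term is $\leq0$, and it vanishes exactly when $\delta=0$, i.e.\ when $\deg R=0$, i.e.\ when $f_{|D}$ is étale.
\end{itemize}
So the Euler characteristic is negative unless both conditions hold. In that exceptional case $D^2=a\delta=0$ and the Euler characteristic equals $0$.

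I expect no real obstacle here. The only points needing care are using the smoothness of $D$, so that adjunction computes the actual genus with no singularity correction, and checking the sign of $K_X$ relative to the normalization of $C_0$ (namely $K_X\equiv-2C_0+(2g-2-e)F$) in the computation of $K_X\cdot D$.
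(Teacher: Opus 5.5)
Your proposal is correct and follows the paper's proof essentially step for step. You combine adjunction with Riemann--Hurwitz to get $\deg R=(a-1)\delta$, substitute $2b=\delta+ea$ into \eqref{irr-ruled-chi} so the $e$-terms cancel, and finish with the sign analysis using $\delta=\deg R/(a-1)\geq 0$.
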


\begin{proof}
The adjunction formula and Riemann--Hurwitz formula give, respectively,
\[
2g(D)-2=D\cdot(D+K_X) \quad\hbox{and}\quad 2g(D)-2=a(2g-2)+\deg R.
\]
Subtracting the second right-hand side from the first, and using the intersection calculations above, gives
\[
\deg R=D^2+K_X\cdot D-a(2g-2)=(a-1)(2b-ea).
\]
The identity $D^2=a(2b-ea)$ proves the first assertion. Substituting $b=(\delta+ea)/2$ in \eqref{irr-ruled-chi} cancels all terms involving $e$ and yields \eqref{irr-ruled-chi-ramification}. Since $a\geq2$, both coefficients $5-4a$ and $2-3a$ are negative. Moreover $\delta\geq0$ because it is $\deg R/(a-1)$. Equality can therefore
occur only when $g=1$ and $\deg R=0$; the converse and the assertion about $D^2$ are immediate.
\end{proof}

It remains to treat precisely the equality case of Lemma~\ref{irr-ruled-ramification}. We use ``foliation'' below only as shorthand for a line subbundle of $\Tt_X$ which maps isomorphically to $f^*\Tt_C$; equivalently, it is a splitting of the relative tangent sequence.

\begin{proposition}\label{irr-ruled-etale-multisection}
Let $C$ be elliptic and let $D\subset X=\PP_C(\Vv)$ be a smooth
irreducible \'etale multisection of degree $a\geq2$. Then the relative logarithmic sequence \eqref{irr-ruled-relative-log} splits:
\[
\Tt_X(-\log D)\cong  \Tt_{X/C}(-D)\oplus\Oo_X.
\]
In particular, we have $\mathrm{H}^1\bigl(X,\mathcal{E}\!nd_0(\Tt_X(-\log D))\bigr)\neq0$. Moreover, the logarithmic tangent bundle has a positive-dimensional fixed-determinant deformation.
\end{proposition}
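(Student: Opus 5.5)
The plan is to show that the derivation $\partial$ spanning $\Tt_C\cong\Oo_C$ lifts to a global logarithmic vector field on $(X,D)$; such a lift splits \eqref{irr-ruled-relative-log}. Since $C$ is elliptic and $f_{|D}$ is \'etale, the ramification scheme $Z$ is empty and $f^*\Tt_C\cong\Oo_X$. So Lemma~\ref{relative-logarithmic-sequence} gives
\[
0\to\Tt_{X/C}(-D)\to\Tt_X(-\log D)\to\Oo_X\to0,
\]
and a splitting is exactly a section of $\Tt_X(-\log D)$ mapping to $1$. I would build this section on a finite \'etale cover and then descend it by averaging.

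Let $\widetilde C\to C$ be the Galois closure of the \'etale cover $D\to C$, with group $G$. Then $\widetilde C$ is again an elliptic curve, and $D\times_C\widetilde C$ is a disjoint union of $a$ sections $\sigma_1,\dots,\sigma_a$ of $\widetilde X:=X\times_C\widetilde C\to\widetilde C$. Since $D$ is irreducible, $G$ permutes the $\sigma_i$ transitively, so all $\sigma_i^2$ are equal. Pulling back $D^2=0$ from Lemma~\ref{irr-ruled-ramification} gives $\sum\sigma_i^2=0$, hence $\sigma_i^2=0$ for every $i$. Two disjoint sections exhibit $\widetilde X\cong\PP(\Oo\oplus M)$ with $\sigma_1,\sigma_2$ the two tautological sections, and then $\deg M=\pm\sigma_1^2=0$. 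Hence $M\in\Pic^0(\widetilde C)$ carries a flat unitary connection. Its horizontal foliation is transverse to the fibers and everywhere tangent to $\sigma_1,\sigma_2$. I also need it tangent to the remaining $\sigma_i$. Each $\sigma_i$ is a section with $\sigma_i^2=0$ disjoint from $\sigma_1$, so it is a flat section of $\PP(\Oo\oplus M)$, i.e. the graph of a section of $\Hom$ between flat bundles with trivial $M'$. This compatibility is the step I expect to need the most care. If it fails for the naive connection, I would argue instead that $\Oo\oplus M$ with three disjoint degree-zero sub-line-bundles forces $M\cong\Oo$ and $\widetilde X\cong\widetilde C\times\PP^1$, with the $\sigma_i$ constant sections, where the product foliation works.

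The lift $\widetilde v$ of the translation-invariant field $\widetilde\partial$ is therefore a section of $\Tt_{\widetilde X}(-\log\widetilde D)$. Since $\widetilde\partial$ is $G$-invariant, the average $\frac1{|G|}\sum_{g\in G}g^*\widetilde v$ is still a lift of $\widetilde\partial$ and is $G$-invariant. It therefore descends to a global section of $\Tt_X(-\log D)$ mapping to $1$. This yields $\Tt_X(-\log D)\cong\Aa\oplus\Oo_X$ with $\Aa:=\Tt_{X/C}(-D)$.

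Consequently $\mathcal E\!nd_0(\Tt_X(-\log D))\cong\Oo_X\oplus\Aa\oplus\Aa^{-1}$, and $\mathrm{H}^1\supset\mathrm{H}^1(\Oo_X)\cong\CC^{q}$ with $q=1$, which is nonzero. For the positive-dimensional family I would not use Proposition~\ref{kuranishi}, since $\mathrm{h}^2$ is not controlled. Instead I take the explicit family $\Ee_N:=(\Aa\otimes N)\oplus N^{-1}$ for $N\in f^*\Pic^0(C)$. Each $\Ee_N$ has determinant $\Aa$, so the family has fixed determinant, and $\Ee_{\Oo}=\Tt_X(-\log D)$. By Krull--Schmidt, $\Ee_N\cong\Ee_{N'}$ forces either $N\cong N'$ or $\Aa\otimes N\cong N'^{-1}$. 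When $a\neq2$, the second option is impossible because $\Aa\cdot F=2-a\neq0$. When $a=2$, it cuts out at most a translate of a single point in the one-dimensional $\Pic^0(C)$. Near $N=\Oo_X$ the family is therefore injective on isomorphism classes, hence nonisotrivial, and it gives the required positive-dimensional fixed-determinant deformation.
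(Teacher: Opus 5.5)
Your proof is correct. For the splitting, you follow essentially the paper's route:
\begin{itemize}
\item a Galois \'etale base change, producing $a$ disjoint conjugate sections of square zero;
\item for $a=2$, a connection on the degree-zero bundle $M$ (the paper phrases this as the vanishing of the Atiyah class $\{d\log g_{ij}\}$ of $M$);
\item for $a\geq3$, your fallback argument: three pairwise complementary line subbundles force $M\cong\Oo$, and the remaining sections are constant. This is exactly the paper's argument, and in that case the trivial connection works as well, so your hedge is harmless.
\end{itemize}
Your descent by averaging a lifted section is the same Reynolds-operator idea as the paper's use of the normalized trace to make $\mathrm{H}^1(X,\Tt_{X/C}(-D))\to\mathrm{H}^1(X',q^*\Tt_{X/C}(-D))$ injective. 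The only difference is that you produce the splitting section directly, while the paper shows the extension class vanishes.

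The genuine divergence is in the last two assertions. The paper uses $\chi(\mathcal{E}\!nd_0(\Tt_X(-\log D)))=0$ from Lemma~\ref{irr-ruled-ramification}. Since $\mathrm{diag}(1,-1)$ gives $\mathrm{h}^0\geq1$, this yields $\mathrm{h}^1-\mathrm{h}^2=\mathrm{h}^0\geq1$, and Proposition~\ref{kuranishi} then applies. So your remark that $\mathrm{h}^2$ is not controlled is mistaken: Riemann--Roch controls it relative to $\mathrm{h}^1$.

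You instead read off the summand $\mathrm{H}^1(X,\Oo_X)\cong\CC$ of $\mathrm{H}^1(X,\Oo_X\oplus\Aa\oplus\Aa^{-1})$. You then integrate that direction explicitly as $(\Aa\otimes N)\oplus N^{-1}$ with $N\in f^*\Pic^0(C)$. This avoids Riemann--Roch and the Kuranishi estimate, exhibits the deformation concretely, and gives nonisotriviality outright, as the paper itself does in the fiber case of Proposition~\ref{irr-ruled-fibers-sections}. The paper's route is shorter and also yields the bound $\mathrm{h}^1\geq\mathrm{h}^2+\mathrm{h}^0$.

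There is one imprecision. For $a=2$, your claim of injectivity near $N=\Oo_X$ needs $\Aa\not\cong\Oo_X$. This is true: on $\widetilde X$ the pullback of $\Aa$ is trivialized by the vertical field $z\partial_z$, which the deck involution swapping $\sigma_1,\sigma_2$ sends to $-z\partial_z$. Hence $\Aa$ is the pullback of the nontrivial $2$-torsion bundle defining $D\to C$. You do not actually need this, though. Krull--Schmidt already makes $N\mapsto[\Ee_N]$ at most two-to-one, which suffices for a nonisotrivial family and hence for a positive-dimensional image in the Kuranishi germ. Relatedly, your phrase ``a translate of a single point'' should read ``for each $N$, at most one other $N'$.''
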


\begin{proof}
By Lemma~\ref{irr-ruled-ramification}, one has $D^2=0$. Let $u\colon C'\rightarrow C$ be a finite \'etale cover dominating the Galois closure of $D\to C$, and form the Cartesian diagram
\[
\begin{tikzcd}
X'=X\times_C C' \arrow[r,"q"] \arrow[d,"f'"]&
X \arrow[d,"f"]\\
C' \arrow[r,"u"]&C.
\end{tikzcd}
\]
Then $q$ is finite and \'etale, and the inverse image of $D$ is a disjoint union
\[
D'=q^{-1}(D)=S_1+\cdots+S_a
\]
of sections of $f'$. Since the sections are disjoint and conjugate and $(D')^2=(\deg u)D^2=0$, one has $S_i^2=0$ for every $i$.

\begin{claim}\label{upstair-split}
The pullback of the relative logarithmic sequence
\begin{equation}\label{irr-ruled-pulled-log-sequence}
0\to  \Tt_{X'/C'}(-D') \to \Tt_{X'}(-\log D') \to \Oo_{X'} \to 0
\end{equation}
splits.
\end{claim}
\begin{claimproof}
Suppose first that $a=2$. The two disjoint sections reduce the structure group of the $\PP^1$-bundle, and, after tensoring the underlying rank-two bundle by a line bundle, we may write
\[
X'\cong\PP_{C'}(\Oo_{C'}\oplus\Mm),
\]
where $S_1$ and $S_2$ are the two distinguished sections. Their self-intersection zero implies $\deg\Mm=0$.  Choose an open covering $\{U_i\}$ of $C'$ on which $\Mm$ is trivial, and let $g_{ij}\in\Oo_{C'}^*(U_i\cap U_j)$ be its transition functions. If $z_i$ denotes the corresponding affine fiber coordinate, then on overlaps $z_j=g_{ij}z_i$. The local product splittings of
\eqref{irr-ruled-pulled-log-sequence} differ on $U_i\cap U_j$ by
\[
d\log(g_{ij})\otimes z_i\frac{\partial}{\partial z_i}.
\]
Thus the extension class of \eqref{irr-ruled-pulled-log-sequence} is represented by the Čech cocycle $\left\{d\log(g_{ij})\right\}$, which is the Atiyah class, or equivalently the first Chern class, of $\Mm$. On a smooth projective curve this class is determined by $\deg\Mm$. Since $\deg\Mm=0$, the extension class vanishes, and
\eqref{irr-ruled-pulled-log-sequence} splits.

Now suppose that $a\geq3$. Choose three of the disjoint sections and order them. There is a unique fiberwise projective transformation sending these three sections to $0$, $1$, and $\infty$. These fiberwise transformations glue and give a trivialization $X'\cong C'\times\PP^1$. Every remaining section is constant in this trivialization. Indeed, such a section is the graph of a morphism
\[
C'\to \PP^1\setminus\{0,1,\infty\},
\]
which must be constant because $C'$ is complete. Hence $D'=C'\times A$ for a finite subset $A\subset\PP^1$. Therefore
\[
\Tt_{X'}(-\log D') \cong p_{\PP^1}^*\Tt_{\PP^1}(-\log A) \oplus p_{C'}^*\Tt_{C'}.
\]
Since $C'$ is elliptic, $p_{C'}^*\Tt_{C'}\cong\Oo_{X'}$, so \eqref{irr-ruled-pulled-log-sequence} splits in this case as well.
\end{claimproof}

Let
\[
\xi\in \Ext_X^1\bigl(\Oo_X,\Tt_{X/C}(-D)\bigr)=\mathrm{H}^1\bigl(X,\Tt_{X/C}(-D)\bigr)
\]
be the extension class of \eqref{irr-ruled-relative-log}. The splitting upstairs shows that $q^*\xi=0$. Since $q$ is finite and \'etale, the normalized trace gives a splitting
\[
\Oo_X\to q_*\Oo_{X'} \to \Oo_X
\]
of the natural inclusion. By the projection formula,
\[
\mathrm{H}^1\bigl(X,\Tt_{X/C}(-D)\bigr) \to \mathrm{H}^1\bigl( X',q^*\Tt_{X/C}(-D)\bigr)
\]
is therefore injective. Hence $q^*\xi=0$ implies $\xi=0$. Thus the relative logarithmic sequence splits:
\[
\Tt_X(-\log D) \cong \Tt_{X/C}(-D)\oplus\Oo_X.
\]
The diagonal endomorphism with eigenvalues $1$ and $-1$ is consequently a nonzero section of
\[
\mathcal E\!nd_0(\Tt_X(-\log D)).
\]
Lemma~\ref{irr-ruled-ramification} gives $\chi\bigl(\mathcal E\!nd_0(\Tt_X(-\log D))\bigr)=0$, and therefore,
\[
\begin{aligned}
&\mathrm{h}^1\bigl(X,\mathcal E\!nd_0(\Tt_X(-\log D))\bigr)-\mathrm{h}^2\bigl(X,\mathcal E\!nd_0(\Tt_X(-\log D)) \bigr)\\
&\qquad =\mathrm{h}^0\bigl(X,\mathcal E\!nd_0(\Tt_X(-\log D))\bigr)>0.\end{aligned}
\]
Proposition~\ref{kuranishi} now gives a positive-dimensional fixed-determinant deformation.
\end{proof}

Every nonempty smooth irreducible divisor $D\subset X$ falls into one of three mutually exclusive classes according to $a=D\cdot F$. If $a=0$, then $D$ is a fiber of $f$. If $a=1$, then $f_{|D}\colon D\rightarrow C$ is an isomorphism and $D$ is a section. If $a\geq2$, then $D$ is a horizontal multisection.

The preceding arguments impose no sign condition on the ruled-surface invariant $e$. Proposition~\ref{irr-ruled-fibers-sections} treats fibers and sections for every $e$, while Lemma~\ref{irr-ruled-ramification} and Proposition~\ref{irr-ruled-etale-multisection} treat all multisections of degree at least two. Hence all ruled surfaces with negative invariant are included; in particular, this covers the stable ruled surface with $g(C)=1$ and $e=-1$.

\begin{theorem}\label{irr-ruled-complete}
Let $X=\PP_C(\Vv)$ be a minimal ruled surface over a curve of genus $g(C)\geq1$, and let $D\subset X$ be a nonempty smooth irreducible divisor. Then we have
\[
\mathrm{H}^1\bigl(X,\mathcal{E}\!nd_0(\Tt_X(-\log D))\bigr)\neq0.
\]
Moreover, $\Tt_X(-\log D)$ has a positive-dimensional fixed-determinant deformation.
\end{theorem}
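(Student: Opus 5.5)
The proof will be a case split on $a=D\cdot F$, following the trichotomy recorded just before the statement. Since $D$ is smooth, irreducible and nonempty, it is a fiber ($a=0$), a section ($a=1$), or a horizontal multisection ($a\geq2$), and these cases are exhaustive and mutually exclusive. I will check that each earlier result used holds for every value of the invariant $e$, including $e<0$. That is indeed so: the proofs of Proposition~\ref{irr-ruled-fibers-sections}, Lemma~\ref{irr-ruled-ramification} and Proposition~\ref{irr-ruled-etale-multisection} use only the intersection table and the formula \eqref{irr-ruled-chi}, and these impose no sign condition on $e$.

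For $a\in\{0,1\}$, Proposition~\ref{irr-ruled-fibers-sections} gives both conclusions directly. In the fiber case it constructs a nonisotrivial fixed-determinant family $\{\widetilde\Ee_u\}$. In the section case it proves $\mathrm{H}^2(\mathcal E\!nd_0)=0$ and $\mathrm{h}^1>0$, and then applies Proposition~\ref{kuranishi}.

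For $a\geq2$, I will use Lemma~\ref{irr-ruled-ramification}, which gives
\[
\chi\bigl(\mathcal E\!nd_0(\Tt_X(-\log D))\bigr)=(5-4a)(g-1)+(2-3a)\delta,
\]
with $\delta=\deg R/(a-1)\geq0$. If $g\geq2$, or if $g=1$ and $\deg R>0$, this is strictly negative, and Corollary~\ref{numerical}(1) yields a positive-dimensional fixed-determinant deformation. In particular $\mathrm{h}^1>0$, because $\mathrm{h}^1-\mathrm{h}^2\geq-\chi>0$.

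The remaining case is $g=1$ with $f_{|D}$ étale. There Proposition~\ref{irr-ruled-etale-multisection} applies verbatim: it shows the bundle splits as $\Tt_{X/C}(-D)\oplus\Oo_X$, which gives $\mathrm{h}^0(\mathcal E\!nd_0)>0=\chi$. Corollary~\ref{numerical}(3), or equivalently Proposition~\ref{kuranishi}, then gives both conclusions.

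I do not expect a genuine obstacle, because all the work was done earlier. The only point needing care is the equality case of Lemma~\ref{irr-ruled-ramification}. I must make sure that ``Euler characteristic not negative'' forces exactly $g=1$ and $\deg R=0$. This holds because both coefficients $5-4a$ and $2-3a$ are negative for $a\geq2$, and $\delta\geq0$. With that checked, the cases of the proof match the hypotheses of the earlier propositions exactly.
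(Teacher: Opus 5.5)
Your proposal is correct and follows the paper's proof essentially verbatim. Both split on $a=D\cdot F$, use Proposition~\ref{irr-ruled-fibers-sections} for fibers and sections, and for $a\geq2$ use Lemma~\ref{irr-ruled-ramification} together with Corollary~\ref{numerical}, or Proposition~\ref{irr-ruled-etale-multisection} in the case $g=1$ with $f_{|D}$ \'etale. The paper also justifies the trichotomy in one line: for $a=0$, $D$ lies in a fiber and every fiber is an irreducible $\PP^1$; for $a=1$, $f_{|D}$ is a finite degree-one map of smooth curves and hence an isomorphism. Your remark that the earlier proofs use only the intersection table is an overstatement, though the conclusion that no sign condition on $e$ enters is right.
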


\begin{proof}
Put $a=D\cdot F$. If $a=0$, then $f_{|D}$ is constant, so $D$ is contained in a fiber of $f$. Since every fiber of a ruled surface is an irreducible copy of $\PP^1$, it follows that $D$ is a fiber. If $a=1$, then $f_{|D}\colon D\rightarrow C$ is a finite morphism of degree one between smooth projective curves. It is therefore an isomorphism, and $D$ is a section. The fiber and section cases are covered by Proposition~\ref{irr-ruled-fibers-sections}. Now suppose that $a\geq2$. By Lemma~\ref{irr-ruled-ramification}, either
\[
\chi\bigl(X,\mathcal{E}\!nd_0(\Tt_X(-\log D))\bigr)<0,
\]
in which case Corollary~\ref{numerical} gives a positive-dimensional fixed-determinant deformation, or $C$ is elliptic and $f_{|D}\colon D\rightarrow C$ is \'etale. In the latter case, Proposition~\ref{irr-ruled-etale-multisection} applies.

These arguments impose no sign condition on $e$, so they also cover all ruled surfaces with negative invariant.
\end{proof}


\section{Surfaces of Kodaira dimension zero}

We can completely treat minimal surfaces of Kodaira dimension zero with a smooth irreducible boundary.

\begin{theorem}\label{kappa-zero}
Let $X$ be a minimal smooth projective surface with $\kappa(X)=0$. For $D\subset X$ either empty or a nonempty smooth irreducible divisor, we have
\[
\mathrm{H}^1\bigl(X,\mathcal{E}\!nd_0(\Tt_X(-\log D))\bigr)\neq0.
\]
Thus, for every such pair, the logarithmic tangent bundle admits a nontrivial fixed-determinant first-order deformation. If $D$ is nonempty, the logarithmic tangent bundle has a positive-dimensional fixed-determinant deformation.
\end{theorem}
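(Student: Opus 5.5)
The plan is to go through the four classes of minimal surfaces with $\kappa(X)=0$: K3, Enriques, abelian and bielliptic. Throughout I use that $K_X\equiv 0$, so $K_X^2=0$ and $K_X\cdot D=0$. Theorem~\ref{RR} then reduces to
\[
\chi\bigl(X,\mathcal{E}\!nd_0(\Tt_X(-\log D))\bigr)=-\tfrac{15}{4}\,c_2(X)-3D^2 .
\]
For smooth irreducible $D$, adjunction gives $D^2=2g(D)-2\ge -2$.

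\textbf{K3 and Enriques surfaces.} Here $c_2(X)=24$ or $12$. The Euler characteristic is therefore at most $-90+6<0$, respectively $-45+6<0$. This also holds for $D=\emptyset$, where the values are $-90$ and $-45$. So Corollary~\ref{numerical} gives a positive-dimensional deformation, in particular $\mathrm{H}^1\neq 0$.

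\textbf{Abelian and bielliptic surfaces, first cases.} Here $c_2(X)=0$, so $\chi=-3D^2$.
\begin{itemize}
\item If $D=\emptyset$: on an abelian surface $\mathcal{E}\!nd_0(\Tt_X)\cong\Oo_X^{\oplus 3}$, so $\mathrm{h}^1=3q=6$. On a bielliptic surface $\Tt_X\cong L_1\oplus L_2$ with $L_i$ torsion (the invariant directions coming from the abelian cover). Hence $\mathcal{E}\!nd_0(\Tt_X)\cong\Oo_X\oplus L_1L_2^{-1}\oplus L_2L_1^{-1}$, and $\mathrm{h}^1\ge q=1$. This gives only first-order nonrigidity, as the statement allows.
\item If $D\neq\emptyset$: these surfaces contain no rational curves, so $g(D)\ge1$ and $D^2\ge 0$. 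If $D^2>0$, then $\chi<0$ and Corollary~\ref{numerical} applies.
\end{itemize}

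\textbf{The equality case $D^2=0$, $g(D)=1$.} Now $\chi=0$, so Proposition~\ref{kuranishi} and Corollary~\ref{numerical}(3) reduce everything to producing a nonzero section of $\mathcal{E}\!nd_0(\Tt_X(-\log D))$. That would give $\mathrm{h}^1-\mathrm{h}^2=\mathrm{h}^0>0$.

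On an abelian surface, $D$ is a translate of an elliptic subgroup $E$, hence a fiber of the quotient $\pi\colon X\to B=X/E$. The analogue of \eqref{irr-ruled-fiber-log} is
\[
0\to\Tt_{X/B}\cong\Oo_X\to\Tt_X(-\log D)\to\pi^*\Tt_B(-D)\cong\Oo_X(-D)\to0 .
\]
In the notation of Remark~\ref{elem-observation}, $\Aa=\Oo_X(D)$, and $\mathrm{h}^0(\Oo_X(D))=1$, which yields the required nilpotent section.

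For bielliptic $X$, pull back along the étale abelian cover $q\colon A\to X$. There $q^{-1}(D)$ is a disjoint union of translates of one elliptic subgroup, so the tangent direction of $D$ is one of the invariant foliations. This gives a saturated line subbundle $\Ff\subset\Tt_X(-\log D)$ with $\Ff$ torsion, and a quotient $\Qq$ with $\Aa=\Ff\otimes\Qq^{-1}\equiv D$. Equivalently, $\Aa\cong\Oo_X(D)\otimes\tau$ for a torsion line bundle $\tau$.

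\textbf{Main obstacle.} The hardest step is the bielliptic equality case: I must show that $\mathrm{h}^0(\Aa)>0$, or else that the sequence splits (which gives the diagonal section), or else find another section of $\mathcal{E}\!nd_0$. I would first try to identify $\tau$ as a pullback from the base of the elliptic fibration of $X$ having $D$ as a (possibly multiple) fiber, absorbed by moving $D$ within the fibration. Failing that, I would use the splitting upstairs on $A$ together with the trace argument of Proposition~\ref{irr-ruled-etale-multisection}. That argument descends the vanishing of the extension class of $0\to\Ff\to\Tt_X(-\log D)\to\Qq\to0$ from $A$, which splits the sequence and produces the diagonal endomorphism with eigenvalues $1,-1$.
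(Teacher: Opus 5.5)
Your proposal is correct provided you use your second (fallback) option in the bielliptic equality case. The skeleton is the paper's: Riemann--Roch reduced to $-\tfrac{15}{4}c_2(X)-3D^2$, then Corollary~\ref{numerical} for K3, Enriques and $D^2>0$, then a separate treatment of $D=\emptyset$. For $D^2=0$ you produce a nonzero section of $\mathcal{E}\!nd_0$, so that $\mathrm{h}^1-\mathrm{h}^2=\mathrm{h}^0>0$.

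The routes differ only in the equality cases.
\begin{itemize}
\item For abelian $X$, the paper writes down the splitting $\Tt_X(-\log D)\cong\Oo_X\oplus\Oo_X(-D)$ using a complementary invariant direction. You instead use the nilpotent endomorphism coming from $\mathrm{H}^0(\Oo_X(D))\neq0$ in the fibration $X\to X/E$. Either section suffices.
\item For bielliptic $X$, the paper averages to get a $G$-invariant complement $V'$ of $V_B$, so the splitting on the cover is $G$-equivariant and descends directly. Your fallback splits on the cover with an arbitrary complement. It then uses the normalized trace to see that $q^*\colon\mathrm{H}^1(X,\Aa)\to\mathrm{H}^1(A,q^*\Aa)$ is injective, so the extension class of $0\to\Ff\to\Tt_X(-\log D)\to\mathcal{Q}\to0$ vanishes. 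This is exactly the device of Proposition~\ref{irr-ruled-etale-multisection}. It needs no equivariance check on the splitting, whereas the paper's version names the two summands explicitly.
\end{itemize}
In either version you should state why the components of $q^{-1}(D)$ are parallel: two elliptic curves on an abelian surface with distinct directions meet positively, contradicting disjointness. Hence $G$ preserves $V_B$, and this is what makes $\Ff$ descend.

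Drop your first option, though, because it fails in one of the two subcases. Write $X=(E\times F)/G$ with $\Psi\colon X\to F/G\cong\PP^1$, and suppose $D$ is the reduced support of a fiber of $\Psi$.
\begin{itemize}
\item Then $\Ff$ is the descended $E$-direction, which is $\Oo_X$, and the quotient is $\omega_X^{-1}(-D)$. So $\Aa\cong\omega_X(D)$, and $\tau=\omega_X$ is not a pullback from $\PP^1$.
\item By the canonical bundle formula, $K_X=\Psi^*\Oo_{\PP^1}(-2)+\sum_i(m_i-1)F_i$. For example, with $G=\mathbb{Z}/2$ and $D=F_1$ one gets $\Aa\cong\Psi^*\Oo_{\PP^1}(-1)\otimes\Oo_X(F_2+F_3+F_4)$. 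Its pushforward to $\PP^1$ is $\Oo_{\PP^1}(-1)$, so it has no sections.
\item The same happens when $D$ is a non-multiple fiber.
\end{itemize}
So in this subcase the section of $\mathcal{E}\!nd_0$ must come from the splitting, i.e.\ from your fallback. Your first option works only when $D$ is a fiber of the smooth fibration $X\to E/G$. There $\Aa$ is the pullback of a degree-one line bundle on $E/G$, so it has a section.
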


\begin{proof}
The minimal projective surfaces of Kodaira dimension zero are K3, Enriques, abelian, and bielliptic surfaces. Since $K_X$ is numerically trivial, Theorem~\ref{RR} becomes
\begin{equation}\label{kappa-zero-chi}
\chi\bigl(\mathcal{E}\!nd_0(\Tt_X(-\log D))\bigr)=-\frac{15}{4}c_2(X)-3D^2.
\end{equation}
Suppose first that $X$ is a K3 surface so that $c_2(X)=24$. The adjunction formula gives $D^2=2g(D)-2\geq-2$, and hence
\[
\chi\bigl(\mathcal{E}\!nd_0(\Tt_X(-\log D))\bigr) =-90-3D^2\leq-84<0.
\]
For an Enriques surface, $c_2(X)=12$, and the adjunction formula again gives $D^2\geq-2$. Therefore
\[
\chi\bigl(\mathcal{E}\!nd_0(\Tt_X(-\log D))\bigr)=-45-3D^2\leq-39<0.
\]
Corollary~\ref{numerical} proves both assertions in these cases.

Now suppose that $X$ is abelian or bielliptic. Then $c_2(X)=0$, and there are no rational curves so that the adjunction formula gives $D^2=2g(D)-2\geq0$. If $D^2>0$, \eqref{kappa-zero-chi} is negative, so we again apply Corollary~\ref{numerical}.

It remains to consider $D^2=0$. Then $D$ is an elliptic curve. On an abelian surface, $D$ is a translate of an elliptic subtorus. After translating $D$ to a subgroup, let $V_1\subset \mathrm{H}^0(X,\Tt_X)$ be its invariant tangent direction and choose a complementary invariant direction $V_2$. Multiplication of $V_2$ by a defining section of $D$ then gives
\[
\Tt_X(-\log D)\cong \Oo_X\oplus\Oo_X(-D).
\]
In particular, the diagonal endomorphism with eigenvalues $1$ and $-1$ is a nonzero section of $\mathcal{E}\!nd_0(\Tt_X(-\log D))$.

Now suppose that $X$ is bielliptic, and let $q\colon A\rightarrow X$ be its finite \'etale abelian cover. Put $\Delta=q^{-1}(D)$. Since $\Delta^2=0$, its irreducible components are pairwise disjoint translates of a common elliptic subtorus $B\subset A$. Indeed, two elliptic curves on an abelian surface having distinct tangent directions have positive intersection number.

The deck group $G$ permutes the components of $\Delta$ and therefore preserves the invariant tangent direction $V_B\subset \mathrm{H}^0(A,\Tt_A)$ of $B$. Since $G$ is finite, $V_B$ admits a $G$-invariant complementary subspace $V'$ in $\mathrm{H}^0(A,\Tt_A)$. Consequently, the logarithmic splitting
\[
\Tt_A(-\log\Delta) \cong (V_B\otimes\Oo_A) \oplus (V'\otimes\Oo_A(-\Delta))
\]
is $G$-equivariant. It therefore descends to a splitting
\[
\Tt_X(-\log D)\cong\Ll_1\oplus\Ll_2
\]
into two line bundles. Hence the diagonal endomorphism with eigenvalues $1$ and $-1$ gives a nonzero section of
\[
\mathcal{E}\!nd_0(\Tt_X(-\log D)).
\]
Since \eqref{kappa-zero-chi} is zero, Riemann--Roch gives
\[
\mathrm{h}^1(\mathcal{E}\!nd_0(\Tt_X(-\log D)))=\mathrm{h}^0(\mathcal{E}\!nd_0(\Tt_X(-\log D)))+\mathrm{h}^2(\mathcal{E}\!nd_0(\Tt_X(-\log D)))>0.
\]

Finally, when $D$ is empty, the same conclusion follows directly. On a K3 or Enriques surface the Euler characteristic is negative. On an abelian surface the tangent bundle is trivial. On a bielliptic surface the two invariant tangent directions on the \'etale abelian cover descend as line subbundles, and the diagonal trace-free endomorphism together with $q(X)=1$ gives nonzero first cohomology.
\end{proof}

\begin{remark}
Irreducibility is used only for the estimate $D^2\geq-2$ on K3 and Enriques surfaces. A smooth disconnected divisor may contain many disjoint $(-2)$-curves, so equation \eqref{kappa-zero-chi} need not be negative. That disconnected-boundary case requires a separate analysis of the configuration and is not included in Theorem~\ref{kappa-zero}.
\end{remark}


\section{Surfaces of Kodaira dimension one}

Let $X$ be a minimal projective surface with $\kappa(X)=1$. Its Iitaka fibration is an elliptic fibration $f\colon X\rightarrow B$, and $K_X$ is nef with $K_X^2=0$. The logarithmic Riemann--Roch formula therefore reads
\begin{equation}\label{kappa-one-chi}
\chi\bigl(\mathcal{E}\!nd_0(\Tt_X(-\log D))\bigr)=-\frac{15}{4}c_2(X)-2K_X\cdot D-3D^2.
\end{equation}
When $D$ is empty, nonrigidity follows already from Peternell's surface classification~\cite[Theorem~1.4]{Peternell}, since a surface of Kodaira dimension one is neither $\PP^2$ nor a ball quotient.

\begin{theorem}\label{kappa-one-nef}
Let $X$ be a minimal smooth projective surface with $\kappa(X)=1$, and let $D\subset X$ be a nonempty smooth irreducible nef divisor. Then
\[
\mathrm{H}^1\bigl(X,\mathcal{E}\!nd_0(\Tt_X(-\log D))\bigr)\neq0.
\]
\end{theorem}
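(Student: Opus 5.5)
The plan is to use the Euler characteristic formula \eqref{kappa-one-chi} together with the nefness of $K_X$ and $D$ to dispose of almost every case by Corollary~\ref{numerical}, and then to analyse the single equality case directly. Since $K_X^2=0$, Noether's formula gives $c_2(X)=12\chi(\Oo_X)$, and $\chi(\Oo_X)\geq0$ for surfaces of nonnegative Kodaira dimension, so $c_2(X)\geq0$. Because $K_X$ and $D$ are both nef, we have $K_X\cdot D\geq0$ and $D^2\geq0$. Hence every term on the right-hand side of \eqref{kappa-one-chi} is nonpositive. If any one of $c_2(X)$, $K_X\cdot D$, $D^2$ is nonzero, the Euler characteristic is negative, and Corollary~\ref{numerical} gives $\mathrm{H}^1\neq0$; in fact it gives a positive-dimensional deformation.

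The remaining case is $c_2(X)=0$, $K_X\cdot D=0$ and $D^2=0$. I will first pin down the geometry here.
\begin{itemize}
\item By the canonical bundle formula, $K_X\equiv \lambda F$ with $\lambda>0$ for a general fibre $F$, because $\kappa=1$. So $K_X\cdot D=0$ forces $D\cdot F=0$, and $D$ is vertical.
\item Since $D$ is irreducible, smooth and $D^2=0$, it is the reduced support of a fibre.
\item Since $c_2(X)=0$, the Euler numbers of the singular fibres sum to zero, so every fibre has smooth elliptic support; the only possible degeneracies are multiple fibres $m_iF_i$.
\end{itemize}
Consequently $D$ is an elliptic curve, either a smooth fibre $F_p$ or a reduced multiple fibre $F_i$, and $f$ is a smooth morphism away from the multiple fibres.

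In this case $\chi=0$. Therefore $\mathrm{h}^1=\mathrm{h}^0+\mathrm{h}^2$, and it suffices to produce either a nonzero trace-free endomorphism or a nonzero section of $\mathcal{E}\!nd_0(\Tt_X(-\log D))\otimes K_X$. Mimicking \eqref{irr-ruled-fiber-log}, I will write the vertical logarithmic sequence
\[
0\to \Tt_{X/B}\to \Tt_X(-\log D)\to \Mm\to 0 .
\]
Here $\Tt_{X/B}$ is a line bundle with $\Tt_{X/B}\equiv -K_{X/B}\equiv$ a combination of fibre classes. The quotient $\Mm$ is $f^*\Tt_B(-p)$, corrected by $\Oo_X(-\sum(m_i-1)F_i)$ along the multiple fibres (and by one less twist if $D$ itself is a multiple fibre); I will check this local form in coordinates $t=x^{m}$, as in Lemma~\ref{relative-logarithmic-sequence}. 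Remark~\ref{elem-observation} then embeds $\Aa=\Tt_{X/B}\otimes\Mm^{-1}$, and hence $\Aa\otimes K_X$, into the relevant bundles. So it is enough to show either that the extension splits, or that $\mathrm{h}^0(\Aa\otimes K_X)>0$. Numerically, $\Aa\otimes K_X$ is a pullback from $B$ twisted by multiple fibres, of degree roughly $2(2g(B)-2)+1+\sum(m_i-1)/m_i$ on the orbifold base. By $\kappa=1$ this is positive, and it is large enough to have sections once $g(B)\geq1$.

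The main obstacle is the low-genus situation, especially $g(B)=0$ with few multiple fibres, where $\Aa\otimes K_X$ may fail to have sections and $p_g$ may vanish. There I would instead imitate Proposition~\ref{irr-ruled-etale-multisection} and the bielliptic argument of Theorem~\ref{kappa-zero}. Since $c_2=0$, the fibration is isotrivial with only multiple smooth fibres. A finite cover $B'\to B$ branched exactly to kill the multiplicities therefore gives an étale cover $X'=(E\times B')/G\to X$ on which $\Tt_{X'}(-\log D')\cong p_E^*\Tt_E\oplus p_{B'}^*\Tt_{B'}(-\log D'')$ splits equivariantly. Averaging over $G$, using a $G$-stable complement, should descend this to a splitting $\Tt_X(-\log D)\cong\Ll_1\oplus\Ll_2$. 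The diagonal endomorphism $(1,-1)$ then gives $\mathrm{h}^0(\mathcal{E}\!nd_0)>0$, and so $\mathrm{h}^1>0$. The delicate point is verifying that the product decomposition is $G$-equivariant, since $G$ acts diagonally by translations on $E$. I expect this step to need the most care.
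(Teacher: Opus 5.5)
Your proposal is correct, and its decisive step is the paper's own argument. After the Euler-characteristic reduction to $c_2(X)=K_X\cdot D=D^2=0$ and the identification of $D$ as the reduced support of a smooth, possibly multiple, elliptic fibre, the paper takes a finite \'etale product cover $q\colon E\times B'\to X$ with $X\cong(E\times B')/G$ (the product-quotient description of elliptic quasi-bundles). It then descends the vertical/horizontal splitting of $\Tt_{E\times B'}(-\log q^{-1}(D))$ to $\Tt_X(-\log D)\cong\Ll\oplus\Mm$, and concludes from $\mathrm{h}^0(\mathcal{E}\!nd_0)\geq1$ together with $\chi=0$.

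The paper applies this splitting uniformly, so your $\Aa\otimes K_X$ detour for $g(B)\geq1$ is unnecessary. The equivariance you flag as delicate is automatic: any diagonal $g=(g_E,g_{B'})$ preserves the product decomposition of the tangent bundle and the $G$-stable divisor $q^{-1}(D)=E\times\{c_1,\dots,c_r\}$, whether or not $g_E$ is a translation. No averaging or choice of complement is needed. Note also that the \'etale cover is $E\times B'\to X$, not $(E\times B')/G\to X$ as you wrote.
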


\begin{proof}
Note that the three numbers $c_2(X),\ K_X\cdot D,\ D^2$ are nonnegative. If one is positive, equation \eqref{kappa-one-chi} is negative and Corollary~\ref{numerical} applies.

Assume that $c_2(X)=K_X\cdot D=D^2=0$. Letting $F$ denote the numerical class of a general fiber of $f$, the canonical bundle formula gives
\[
K_X\equiv \left(2g(B)-2+\chi(\Oo_X) +\sum_i\left(1-\frac1{m_i}\right)\right)F,
\]
where $m_iF_i$ are the multiple fibers over $b_i\in B$. Since $\kappa(X)=1$, the coefficient of $F$ is strictly positive. Consequently, $K_X\cdot D=0$ implies $F\cdot D=0$, so $D$ is vertical. Since $c_2(X)=0$, the elliptic fibration has no fiber whose reduced support has positive topological Euler characteristic. Hence every fiber is a smooth elliptic curve, possibly occurring with multiplicity. Since $D$ is smooth, irreducible, and vertical, it is the reduced support of one of these fibers and is therefore an elliptic curve.

The fibration is consequently isotrivial. Since every singular fiber is a multiple of a smooth elliptic curve, $f$ is an elliptic quasi-bundle. By the product-quotient description of elliptic quasi-bundles \cite[Section~2]{SerranoIsotrivial}, there exist an elliptic curve $A$, a smooth projective curve $C'$, and a finite étale morphism
\[
q\colon A\times C'\to X
\]
compatible with the respective fibrations. For suitable points
$c_1,\ldots,c_r\in C'$, one has
\[
q^{-1}(D)=\coprod_{j=1}^r A\times\{c_j\}.
\]
Writing $\Delta=q^{-1}(D)$, the logarithmic tangent bundle of the product pair splits as
\[
\Tt_{A\times C'}(-\log\Delta) \cong  p_A^*\Tt_A\oplus p_{C'}^*\ \Tt_{C'}\bigl(-c_1-\cdots-c_r\bigr).
\]
The two summands are the vertical and horizontal tangent directions. The product cover may be chosen so that $X$ is a quotient of $A\times C'$ by a finite group acting diagonally. Hence both summands are invariant under the deck group and descend to line bundles $\Ll$ and $\Mm$ on $X$. Consequently,
\[
\Tt_X(-\log D)\cong \Ll\oplus \Mm.
\]
The endomorphism acting as the identity on $\Ll$ and as minus the identity on $\Mm$ is a nonzero trace-free endomorphism. Therefore
\[
\mathrm{h}^0\bigl(X,\mathcal{E}\!nd_0(\Tt_X(-\log D))\bigr)\geq1.
\]
In the present case, we have $c_2(X)=K_X\cdot D=D^2=0$, so \eqref{kappa-one-chi} gives $\chi\bigl(X,\mathcal{E}\!nd_0(\Tt_X(-\log D))\bigr)=0$. Riemann--Roch therefore yields
\[
\mathrm{h}^1\bigl(X,\mathcal{E}\!nd_0(\Tt_X(-\log D)))\bigr)=\mathrm{h}^0\bigl(X,\mathcal{E}\!nd_0(\Tt_X(-\log D))\bigr)+\mathrm{h}^2\bigl(X,\mathcal{E}\!nd_0(\Tt_X(-\log D))\bigr)>0. \qedhere
\]
\end{proof}

The assumption that $D$ is nef in Theorem \ref{kappa-one-nef} isolates the negative-curve case: if $D$ is irreducible and not nef, then $D^2<0$. If $\Tt_X(-\log D)$ is infinitesimally rigid with fixed determinant, then $\mathrm{H}^1\bigl(X,\mathcal{E}\!nd_0(\Tt_X(-\log D))\bigr)=0$ and so $\chi\bigl(X,\mathcal{E}\!nd_0(\Tt_X(-\log D))\bigr)\geq 0$. By \eqref{kappa-one-chi}, one gets 
\begin{equation}\label{kappa-one-negative-necessary}
-3D^2\geq\frac{15}{4}c_2(X)+2K_X\cdot D.
\end{equation}
Thus only sufficiently negative curves can survive the test in Corollary \ref{numerical}. 

\begin{proposition}\label{kappa-one-negative-cases}
Let $X$ be a minimal smooth projective surface with $\kappa(X)=1$, and let $f\colon X\rightarrow B$ be its elliptic Iitaka fibration. If $D\subset X$ is a nonempty smooth irreducible curve, then the following hold:
\begin{enumerate}[label=\textup{(\arabic*)}]
\item if $D$ is a rational component of a singular fiber, then $\mathrm{H}^1(X,\mathcal{E}\!nd_0(\Tt_X(-\log D)))\neq0$;
\item if $D$ is a section of $f$, then $\mathrm{H}^1(X,\mathcal{E}\!nd_0(\Tt_X(-\log D)))\neq0$.
\end{enumerate}
\end{proposition}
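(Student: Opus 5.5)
Both parts should follow from Corollary~\ref{numerical}: in each case I would show that the right-hand side of \eqref{kappa-one-chi} is strictly negative. This needs only the standard numerical facts about an elliptic fibration $f\colon X\to B$, namely the canonical bundle formula, Noether's formula, and $c_2(X)=12\chi(\Oo_X)=\sum_s e(F_s)\geq 0$.

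\emph{Part (1).} First I would pin down what $D$ can be. A singular fiber whose support is irreducible and rational is a nodal or cuspidal curve, so it is not smooth. The reduced support of a multiple fiber with irreducible support is a smooth elliptic curve or again a singular rational curve. Hence a smooth rational component $D$ lies in a reducible fiber, and Zariski's lemma gives $D^2=-2$. Since $K_X$ is numerically a multiple of $F$, we get $K_X\cdot D=0$. The fiber containing $D$ has Euler number at least $2$, since this holds for every reducible Kodaira fiber. So $c_2(X)>0$, and because $c_2(X)=12\chi(\Oo_X)$ we in fact get $c_2(X)\geq12$. Then \eqref{kappa-one-chi} gives
\[
\chi\bigl(\mathcal{E}\!nd_0(\Tt_X(-\log D))\bigr)=-\tfrac{15}{4}c_2(X)+6\leq-39<0 .
\]

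\emph{Part (2).} Here $D\cong B$, and the existence of a section rules out multiple fibers. The standard formula for a section of an elliptic fibration gives $D^2=-\chi(\Oo_X)$. Adjunction then gives $K_X\cdot D=2g(B)-2+\chi(\Oo_X)$. Substituting into \eqref{kappa-one-chi} yields
\[
\chi\bigl(\mathcal{E}\!nd_0(\Tt_X(-\log D))\bigr)=-45\chi(\Oo_X)-2\bigl(2g(B)-2+\chi(\Oo_X)\bigr)+3\chi(\Oo_X)=-44\chi(\Oo_X)-4g(B)+4 .
\]
If $\chi(\Oo_X)\geq1$, this is at most $-40$. If $\chi(\Oo_X)=0$, the canonical bundle formula with no multiple fibers reads $K_X\equiv(2g(B)-2)F$. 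Then $\kappa(X)=1$ forces $g(B)\geq2$, so the expression equals $4-4g(B)<0$.

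In both cases Corollary~\ref{numerical} then gives a positive-dimensional fixed-determinant deformation, and in particular $\mathrm{H}^1\neq0$. The only step needing care is the case analysis in part (1) excluding smooth rational $D$ with $D^2=0$. I expect this to follow directly from Kodaira's list of fibers, since a smooth irreducible fiber support of an elliptic fibration is always elliptic.
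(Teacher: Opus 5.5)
Your proposal is correct and follows the paper's proof essentially step for step. In case (1) you use $K_X\cdot D=0$, $D^2=-2$ and $c_2(X)=12\chi(\Oo_X)\geq 12$; in case (2) you use $D^2=-\chi(\Oo_X)$ and $K_X\cdot D=2g(B)-2+\chi(\Oo_X)$; in both cases you conclude from strict negativity of \eqref{kappa-one-chi} via Corollary~\ref{numerical}. Your extra care only spells out what the paper leaves implicit: checking Kodaira's list so the fiber containing $D$ has positive Euler number, and splitting $\chi(\Oo_X)=0$ from $\chi(\Oo_X)\geq 1$, which uses $\chi(\Oo_X)\geq 0$. The case you flagged, $D^2=0$ with $D$ smooth rational, needs no fiber list: adjunction with $K_X\cdot D=0$ already forces $D^2=-2$.
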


\begin{proof}
For a smooth rational component $D$ of a relatively minimal elliptic fiber, $K_X\cdot D=0$ and $D^2=-2$. Moreover, the existence of a singular fiber gives $c_2(X)>0$, and $c_2(X)=12\chi(\Oo_X)\geq12$. Hence
\[
\chi(\mathcal{E}\!nd_0(\Tt_X(-\log D)))=-\frac{15}{4}c_2(X)+6<0.
\]
If $D$ is a section, then
\[
D^2=-\chi(\Oo_X),\qquad K_X\cdot D=2g(B)-2+\chi(\Oo_X);
\]
an elliptic fibration with a section has no multiple fibers in its scheme-theoretic fiber class. Substitution yields
\[
\chi(\mathcal{E}\!nd_0(\Tt_X(-\log D)))=-44\chi(\Oo_X)-4g(B)+4<0,
\]
because $\kappa (X)=1$ gives $2g(B)-2+\chi(\Oo_X)>0$. In both cases, we can use Corollary \ref{numerical}.
\end{proof}

We next record the ramification formulas for a negative horizontal multisection. Write
\begin{equation}\label{notation-num}
\chi_0=\chi(\Oo_X), \qquad s=\sum_i\left(1-\frac1{m_i}\right),
\end{equation}
where $m_i$ are the multiplicities of the multiple fibers. The canonical bundle formula gives
\begin{equation}\label{can-bundle-form}
K_X\equiv \bigl(2g(B)-2+\chi_0+s\bigr)F
\end{equation}
numerically, where $F$ is the numerical fiber class.

\begin{proposition}\label{kappa-one-multisection-reduction}
Let $D$ be a smooth irreducible horizontal multisection of degree $n=D\cdot F$, and let $R$ be the total ramification degree of $f_{\ |D}\colon D\rightarrow B$. Then we have
\[
\begin{cases}
~~K_X\cdot D =n\bigl(2g(B)-2+\chi_0+s\bigr); \\
~~D^2=R-n(\chi_0+s); \\
~~\chi\bigl(\mathcal{E}\!nd_0(\Tt_X(-\log D))\bigr)
=(n-45)\chi_0-4n(g(B)-1)+ns-3R.
\end{cases}
\]
Moreover, we have $R\geq ns$. Consequently, genuine rigidity is numerically possible only if
\begin{equation}\label{multi-necessary-general}
n\bigl(\chi_0-4(g(B)-1)-2s\bigr) \geq45\chi_0.
\end{equation}
In particular, every horizontal multisection is genuinely nonrigid if
\begin{equation}\label{multi-easy-range}
\chi_0\leq4(g(B)-1)+2s.
\end{equation}
\end{proposition}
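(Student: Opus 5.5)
The plan is to derive the three numerical identities from the canonical bundle formula, adjunction and Riemann--Hurwitz, and then to insert them into \eqref{kappa-one-chi}. The ramification bound $R\geq ns$ will come from a local analysis over the multiple fibers. The two rigidity criteria then follow from the fact that rigidity forces the Euler characteristic to be nonnegative.

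\emph{Step 1: the three identities.} Intersecting \eqref{can-bundle-form} with $D$ and using $F\cdot D=n$ gives
\[
K_X\cdot D=n\bigl(2g(B)-2+\chi_0+s\bigr).
\]
Next compute $2g(D)-2$ in two ways:
\[
2g(D)-2=D^2+K_X\cdot D \quad\text{(adjunction)},\qquad 2g(D)-2=n(2g(B)-2)+R \quad\text{(Riemann--Hurwitz for } f_{|D}\text{)}.
\]
Subtracting the two gives $D^2=R-n(\chi_0+s)$. Since $K_X^2=0$, Noether's formula gives $c_2(X)=12\chi_0$, so
\[
-\tfrac{15}{4}c_2(X)=-45\chi_0 .
\]
Substituting these values into \eqref{kappa-one-chi} and collecting terms yields
\[
\chi\bigl(\mathcal{E}\!nd_0(\Tt_X(-\log D))\bigr)=(n-45)\chi_0-4n(g(B)-1)+ns-3R.
\]
The coefficient of $\chi_0$ is $-45-2n+3n$, and the coefficient of $s$ is $-2n+3n$.

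\emph{Step 2: the bound $R\geq ns$.} This is the only non-formal point. Let $m_iF_i$ be a multiple fiber over $b_i$.
\begin{itemize}
\item Since $f^*b_i=m_iF_i$ as divisors, the pullback $(f_{|D})^*b_i=(m_iF_i)_{|D}=m_i\,(F_i\cdot D)$ is $m_i$ times an effective divisor.
\item Hence every point $p\in D$ over $b_i$ has ramification index $e_p$ divisible by $m_i$, so $e_p\geq m_i$.
\item This gives $e_p-1\geq e_p(1-1/m_i)$ for each such $p$.
\item Summing over the points above $b_i$, and using $\sum_p e_p=n$, shows that the ramification over $b_i$ is at least $n(1-1/m_i)$.
\end{itemize}
Summing over all $i$, and discarding the nonnegative ramification elsewhere, gives $R\geq ns$.

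\emph{Step 3: the rigidity criteria.} Suppose $\mathrm{H}^1(\mathcal{E}\!nd_0(\Tt_X(-\log D)))=0$. Then
\[
\chi\bigl(\mathcal{E}\!nd_0(\Tt_X(-\log D))\bigr)=\mathrm{h}^0+\mathrm{h}^2\geq0 .
\]
Inserting $R\geq ns$ into the formula of Step 1 gives
\[
0\leq(n-45)\chi_0-4n(g(B)-1)-2ns,
\]
which is exactly \eqref{multi-necessary-general}.

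For \eqref{multi-easy-range}, suppose $\chi_0\leq4(g(B)-1)+2s$. Then the left side of \eqref{multi-necessary-general} is $\leq0$. Since $\chi_0\geq0$ on a minimal surface with $\kappa=1$, there are two cases.
\begin{itemize}
\item If $\chi_0>0$, the right side $45\chi_0$ is positive, so \eqref{multi-necessary-general} fails.
\item If $\chi_0=0$, the condition $\kappa(X)=1$ forces $2g(B)-2+s>0$. Then $4(g(B)-1)+2s>0$, so the left side is strictly negative and \eqref{multi-necessary-general} again fails.
\end{itemize}
In either case the Euler characteristic is strictly negative. Corollary~\ref{numerical} then gives genuine nonrigidity, and in fact a positive-dimensional fixed-determinant deformation. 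The only point needing care in this step is the boundary case $\chi_0=0$, which is settled by the positivity of the coefficient in the canonical bundle formula.
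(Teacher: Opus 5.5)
Your proof is correct and follows the paper's route: the same three identities from the canonical bundle formula, adjunction combined with Riemann--Hurwitz, and $c_2(X)=12\chi_0$, and the same local fact $e_p=m_i\,I_p(D,F_i)$ over multiple fibers (you use $e_p\geq m_i$ pointwise, where the paper instead bounds the number of points over $b_i$ by $n/m_i$). Your explicit handling of the case $\chi_0=0$ in the final claim, via positivity of $2g(B)-2+s$, fills in a step the paper leaves implicit.
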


\begin{proof}
The first equality is obtained by taking the intersection of \eqref{can-bundle-form} with $D$. By the Riemann--Hurwitz formula and the adjunction formula, we obtain
\[
n(2g(B)-2)+R= 2g(D)-2=(K_X+D)\cdot D,
\]
yielding the second. Since $c_2(X)=12\chi_0$, substitution in \eqref{kappa-one-chi} yields the third.

Fix a multiple fiber $m_iF_i=f^{-1}(b_i)$, where $F_i$ is the primitive fiber divisor and $m_i\geq2$. Set $\nu=f_{\ |D}$ and write $a_p:=I_p(D,F_i)$ for every point $p\in D$ with $\nu(p)=b_i$. Since a parameter $t$ at $b_i$ pulls locally back to an equation of $m_iF_i$, we obtain 
\[
e_p:=\mathrm{ord}_p(\nu^*t)=m_i a_p.
\]
Thus $e_p$ is the ramification index of $\nu$ at $p$. Since $\nu$ has degree $n$, $\sum_{p\in\nu^{-1}(b_i)}e_p=n$, and therefore
\[
\sum_{p\in\nu^{-1}(b_i)}a_p=\frac{n}{m_i}.
\]
In particular, the number of distinct points lying over $b_i$ satisfies $\#\nu^{-1}(b_i)\leq\frac{n}{m_i}$, and hence we obtain
\[
\sum_{p\in\nu^{-1}(b_i)}(e_p-1) =n-\#\nu^{-1}(b_i)\geq n-\frac{n}{m_i}=n\left(1-\frac1{m_i}\right).
\]
Summing over all multiple fibers gives $R\geq ns$. 

Suppose now that $\Tt_X(-\log D)$ is infinitesimally rigid with fixed determinant. This implies $\mathrm{H}^1\bigl(X,\mathcal{E}\!nd_0(\Tt_X(-\log D))\bigr)=0$, and hence $\chi\bigl(X,\mathcal{E}\!nd_0(\Tt_X(-\log D))\bigr) \geq0$. On the other hand, the third equality and $R\geq ns$ give
\begin{align*}
\chi\bigl(X,\mathcal{E}\!nd_0(\Tt_X(-\log D))\bigr) &=(n-45)\chi_0-4n(g(B)-1)+ns-3R\\
&\leq(n-45)\chi_0-4n(g(B)-1)-2ns\\
&=n\bigl(\chi_0-4(g(B)-1)-2s\bigr)-45\chi_0.
\end{align*}
The right-hand side must therefore be nonnegative, and this gives \eqref{multi-necessary-general}. If this inequality fails, then the preceding upper bound is strictly negative, so the Euler characteristic itself is negative. Corollary~\ref{numerical} then produces a positive-dimensional fixed-determinant deformation.
\end{proof}

\begin{theorem}\label{kappa-one-negative-multisection}
Let $X$ be a minimal smooth projective surface with $\kappa(X)=1$, and let $D\subset X$ be a smooth irreducible horizontal multisection with $D^2<0$. Then
\[
\mathrm{H}^1\bigl(X,\mathcal{E}\!nd_0(\Tt_X(-\log D))\bigr)\neq0.
\]
In fact, the logarithmic tangent bundle has a genuine positive-dimensional deformation.
\end{theorem}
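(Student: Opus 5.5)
We argue by contradiction via Corollary~\ref{numerical}. Suppose $\mathrm{H}^1\bigl(X,\mathcal{E}\!nd_0(\Tt_X(-\log D))\bigr)=0$; then $\chi\bigl(\mathcal{E}\!nd_0(\Tt_X(-\log D))\bigr)\geq0$, which is \eqref{kappa-one-negative-necessary}:
\[
-3D^2\geq 45\chi_0+2K_X\cdot D .
\]
Once this is contradicted, the Euler characteristic is strictly negative, and Corollary~\ref{numerical} gives both $\mathrm{H}^1\neq0$ and the positive-dimensional fixed-determinant deformation. So the whole proof reduces to an upper bound for $-D^2$ that is strictly smaller than $15\chi_0+\tfrac23K_X\cdot D$.

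First I would dispose of degenerate cases using Proposition~\ref{kappa-one-multisection-reduction}. Since $D^2=R-n(\chi_0+s)$ and $R\geq ns$, the hypothesis $D^2<0$ forces $R-ns<n\chi_0$. In particular $\chi_0\geq1$, so $c_2(X)=12\chi_0>0$. Also $K_X\cdot D=n(2g(B)-2+\chi_0+s)>0$ by \eqref{can-bundle-form} and $\kappa(X)=1$. Next I would split according to $g(D)$.

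\emph{Case $g(D)\geq1$.} Here $(K_X+D)\cdot D=2g(D)-2\geq0$, and $K_X$ is nef. Hence $K_X+D$ is nef, because $D$ is the only curve on which it could be negative. The logarithmic Bogomolov--Miyaoka--Yau inequality then applies to the pair:
\[
(K_X+D)^2\leq 3\bar c_2(X,D).
\]
With $K_X^2=0$ this reads $-2D^2\leq 36\chi_0+K_X\cdot D$. By itself this is too weak. So I would apply the orbibundle Miyaoka--Yau--Sakai inequality \cite[Theorem~1.3(i)]{MiyaokaOrbibundle} to the orbifold pair $(X,D)$ together with the multiple fibers weighted by $1-1/m_i$. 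That inequality carries correction terms involving $s$ and the local contributions at the points of $D\cap F_i$. Combining it with $R\geq ns$ and the identity for $\chi$ in Proposition~\ref{kappa-one-multisection-reduction}, I expect to obtain $-3D^2<45\chi_0+2K_X\cdot D$.

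\emph{Case $g(D)=0$.} Here $D^2=-2-K_X\cdot D$, so the requirement becomes $K_X\cdot D\geq 45\chi_0+6$. Miyaoka's inequality for rational curves on minimal surfaces of nonnegative Kodaira dimension bounds $K_X\cdot D$ by a multiple of $3c_2(X)-K_X^2=36\chi_0$, in the orbifold form including the correction $-\tfrac32 ns$. I would check that this bound is strictly below $45\chi_0+6$, which again contradicts rigidity.

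The main obstacle is the bookkeeping in the orbifold inequality. One must verify that the multiple-fibre corrections and the local intersection data $a_p=I_p(D,F_i)$ from the proof of Proposition~\ref{kappa-one-multisection-reduction} enter with the right signs, so that the final constant beats the factor $45$ coming from $\tfrac{15}{4}c_2(X)$. If the raw bound falls short, I would first try to feed in the sharper estimate $R\geq ns$ fibre by fibre, and the strict inequality $D^2<0$, to close the gap.
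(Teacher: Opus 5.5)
Your reduction to proving $\chi\bigl(\mathcal{E}\!nd_0(\Tt_X(-\log D))\bigr)<0$ is correct, and so are your preliminary facts: $\chi_0\geq1$ follows from $D^2<0$ and $R\geq ns$, and $K_X\cdot D>0$. The gap is the core step. For $g(D)\geq1$ you derive $-2D^2\leq 36\chi_0+K_X\cdot D$ from logarithmic BMY, call it too weak, and then defer to an orbifold inequality with multiple-fibre corrections. You never state or check that inequality; you only ``expect'' it to work. No such refinement is needed.

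What you are missing is a second, elementary upper bound on $\nu:=-D^2$, coming from adjunction. With $k:=K_X\cdot D$ one has $\nu=k+2-2g(D)$, so $\nu\leq k$ when $g(D)\geq1$ and $\nu\leq k+2$ in general.
\begin{itemize}
\item Rigidity gives $\nu\geq15\chi_0+\tfrac23k$.
\item Combined with your bound $\nu\leq18\chi_0+\tfrac12k$, this forces $k\leq18\chi_0$.
\item Combined with adjunction, it forces $k\geq45\chi_0$, or $k\geq45\chi_0-6$ in general.
\end{itemize}
Since $\chi_0\geq1$, these are incompatible.

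This is precisely the paper's argument. The paper applies Miyaoka's orbibundle inequality with $\alpha=1$. For a smooth curve this is numerically identical to the log BMY inequality you wrote down. However, it holds for every smooth irreducible curve on a surface with $\kappa\geq0$, so it needs no nefness of $K_X+D$ and covers $g(D)=0$ at the same time. The paper expresses the adjunction bound through Riemann--Hurwitz: it uses $R\geq0$ when $g(B)\geq1$ and $R\geq 2n-2$ when $B\cong\PP^1$.

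Your rational case also contains an error. With $D^2=-2-K_X\cdot D$, the rigidity condition $-3D^2\geq45\chi_0+2K_X\cdot D$ becomes $K_X\cdot D\geq45\chi_0-6$, not $K_X\cdot D\geq 45\chi_0+6$. So checking that your bound lies ``strictly below $45\chi_0+6$'' would not contradict rigidity.

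Moreover, ``a multiple of $36\chi_0$'' with a conjectural correction $-\tfrac32 ns$ is not a usable statement. The precise bound comes from the same $\alpha=1$ inequality. Substituting $\nu=k+2$ into $\nu\leq18\chi_0+\tfrac12k$ gives $k\leq36\chi_0-4$, which is strictly less than $45\chi_0-6$ for $\chi_0\geq1$. This closes the case and makes your separate treatment of $g(D)=0$ unnecessary.
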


\begin{proof}
We apply the orbibundle Miyaoka--Yau--Sakai inequality \cite[Theorem~1.3(i)]{MiyaokaOrbibundle} to the reduced irreducible curve $D$. Its hypotheses are satisfied: $X$ is a smooth projective surface of nonnegative Kodaira dimension, and $D$ is smooth and irreducible, so in particular its arithmetic genus equals its geometric genus. The inequality states that, for every $\alpha\in[0,1]$,
\[
\frac{\alpha^2}{2}\bigl(D^2+3K_X\cdot D-6g(D)+6\bigr)-2\alpha\bigl(K_X\cdot D-3g(D)+3\bigr)+3c_2(X)-K_X^2 \geq0.
\]
We now take $\alpha=1$ and set
\[
\nu_D:=-D^2>0,\qquad k_D:=K_X\cdot D.
\]
Since $D$ is horizontal and the numerical coefficient of the fiber class in the canonical bundle formula is strictly positive, one has $k_D>0$. The preceding inequality becomes
\[
\frac12(-\nu_D+3k_D-6g(D)+6)-2(k_D-3g(D)+3) +3c_2(X)-K_X^2\geq0.
\]
The adjunction formula gives $2g(D)-2=k_D-\nu_D$. Since $K_X^2=0$ and $c_2(X)=12\chi_0$, the preceding inequality simplifies to $k_D-2\nu_D+36\chi_0\geq0$. Equivalently,
\begin{equation}\label{kappa-one-miyaoka-bound}
\nu_D\leq18\chi_0+\frac{k_D}{2}.
\end{equation}
Suppose, toward a contradiction, that the Euler characteristic $\chi\bigl(\mathcal{E}\!nd_0(\Tt_X(-\log D))\bigr)$ is nonnegative. Then \eqref{kappa-one-chi} gives
\begin{equation}\label{kappa-one-rigidity-lower-q}
\nu_D\geq15\chi_0+\frac{2k_D}{3}.
\end{equation}
Combining \eqref{kappa-one-miyaoka-bound} and \eqref{kappa-one-rigidity-lower-q} yields $k_D\leq18\chi_0$. In particular, one obtains $\chi_0>0$.

If $g(B)\geq1$, equations in Proposition \ref{kappa-one-multisection-reduction} and $R\geq0$ give
\[
\nu_D=n(\chi_0+s)-R\leq n(\chi_0+s)\leq k_D.
\]
Combining this with \eqref{kappa-one-rigidity-lower-q} gives $k_D\geq45\chi_0$, which results in $45\chi_0 \le 18 \chi_0$, a contradiction. If $B\cong \PP^1$, the Riemann--Hurwitz formula gives $R\geq2n-2$, and hence
\[
\nu_D=n(\chi_0+s)-R \leq n(\chi_0+s-2)+2=k_D+2.
\]
Equation \eqref{kappa-one-rigidity-lower-q} now gives $45\chi_0-6\le k_D \le 18\chi_0$. Since $\chi_0$ is a positive integer, this is impossible. Thus we get $\chi\bigl(\mathcal{E}\!nd_0(\Tt_X(-\log D))\bigr)<0$, and Corollary~\ref{numerical} applies.
\end{proof}

\begin{corollary}\label{kappa-one-classification}
Let $X$ be a minimal smooth projective surface with $\kappa(X)=1$, and let $D\subset X$ be a nonempty smooth irreducible divisor. Then
\[
\mathrm{H}^1\bigl(X,\mathcal{E}\!nd_0(\Tt_X(-\log D))\bigr)\neq0.
\]
Consequently there is no genuinely rigid pair of this type.
\end{corollary}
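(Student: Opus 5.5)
The corollary should follow from the results of this section by a case split on $D$ relative to the elliptic Iitaka fibration $f\colon X\to B$. No new input seems needed beyond making sure the cases are exhaustive. I would first separate nef from non-nef boundaries. If $D$ is nef, Theorem~\ref{kappa-one-nef} applies directly and gives $\mathrm{H}^1\bigl(X,\mathcal{E}\!nd_0(\Tt_X(-\log D))\bigr)\neq0$. If $D$ is not nef, then, as observed after Theorem~\ref{kappa-one-nef}, irreducibility forces $D^2<0$: an irreducible curve with $D\cdot D\ge0$ meets every other irreducible curve nonnegatively, so it is nef.

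Next I would split the case $D^2<0$ according to $n=D\cdot F$. If $n\geq1$, then $D$ is a horizontal multisection (possibly a section) with negative self-intersection. Theorem~\ref{kappa-one-negative-multisection} covers this case, since its proof makes no use of $n\ge2$. Proposition~\ref{kappa-one-negative-cases}(2) gives an independent argument for sections. If $n=0$, then $D$ is vertical, so it is contained in a fiber of $f$.

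The remaining step is to show that a vertical smooth irreducible curve with $D^2<0$ is a rational component of a singular fiber. By Zariski's lemma, a vertical irreducible curve has $D^2\le0$, with equality exactly when $D$ is a rational multiple of a full fiber, that is, when $D$ is the reduced support of a fiber. So $D^2<0$ means $D$ is a proper component of a reducible fiber. In a relatively minimal elliptic fibration, the components of reducible fibers are smooth rational $(-2)$-curves, by Kodaira's classification together with $K_X\cdot D=0$ and adjunction. Proposition~\ref{kappa-one-negative-cases}(1) then applies. The existence of such a fiber gives $c_2(X)>0$, which is the input that proposition uses.

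The main thing to check, which I expect to be the only real obstacle, is exhaustiveness. Every nonempty smooth irreducible $D$ is either nef, or non-nef and then horizontal or vertical, and each branch lands in one of the cited statements. For the vertical branch, the Zariski-lemma dichotomy has to be stated carefully, including multiple fibers: when $D$ is the reduced support $F_i$ of a multiple fiber, $D^2=0$ and $D$ is nef, so this case is already covered by Theorem~\ref{kappa-one-nef}. Once these cases are assembled, no genuinely rigid pair with $\kappa(X)=1$ remains, which is the second assertion.
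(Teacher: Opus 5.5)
Your proposal is correct and follows essentially the same route as the paper's proof. Both split into nef versus non-nef, then split the non-nef case $D^2<0$ into vertical versus horizontal, and invoke Theorem~\ref{kappa-one-nef}, Proposition~\ref{kappa-one-negative-cases}(1) and Theorem~\ref{kappa-one-negative-multisection}; your explicit use of Zariski's lemma, and your check that the multisection argument does not need $n\geq2$, only make precise steps the paper states tersely.
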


\begin{proof}
An irreducible curve on the elliptic surface is either vertical or horizontal. If it is nef, Theorem~\ref{kappa-one-nef} gives the required nonvanishing. Suppose it is not nef. Then $D^2<0$. A vertical irreducible curve lies in a fiber. A smooth elliptic reduced multiple fiber has square zero, while every smooth component of a reducible Kodaira fiber is rational of square $-2$; thus the non-nef vertical case is covered by Proposition~\ref{kappa-one-negative-cases}(1). Finally, a horizontal curve is a multisection of positive degree. Its negative-square case is precisely Theorem~\ref{kappa-one-negative-multisection}. These mutually exclusive cases exhaust all smooth irreducible divisors $D$.
\end{proof}


\section{Surfaces of general type}

Let $X$ be a minimal surface of general type. Then $K_X$ is nef and big, and the Miyaoka--Yau defect is nonnegative:
\[
\delta(X)=3c_2(X)-K_X^2 \ge 0.
\]
When $D$ is empty, Peternell's theorem in \cite{Peternell} gives an exceptional possibility: $\Tt_X$ is genuinely rigid precisely for the indicated ball quotients satisfying the additional symmetric-differential vanishing. But the presence of any nonempty smooth irreducible boundary precludes fixed-determinant infinitesimal rigidity of the logarithmic tangent bundle.

\begin{theorem}\label{kappa-two}
Let $X$ be a minimal smooth projective surface of general type and let $D\subset X$ be a nonempty smooth irreducible curve. Then
\[
\mathrm{H}^1\bigl(X,\mathcal{E}\!nd_0(\Tt_X(-\log D))\bigr)\neq0.
\]
Consequently, no such pair is genuinely rigid.
\end{theorem}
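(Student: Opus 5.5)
The plan is to prove strict negativity of $\chi\bigl(\mathcal{E}\!nd_0(\Tt_X(-\log D))\bigr)$ and then invoke Corollary~\ref{numerical}. I split into the cases $g(D)\geq1$ and $g(D)=0$. The basic identity is adjunction, $K_X\cdot D+D^2=2g(D)-2$. It gives
\[
\bar c_2(X,D)=c_2(X)+2g(D)-2 .
\]
Hence, in the form \eqref{RR-log},
\[
\chi=3\chi(\Oo_X)+(K_X+D)^2-4c_2(X)-8(g(D)-1).
\]

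\emph{Positive genus.} Assume $g(D)\geq1$. Then $(K_X+D)\cdot D=2g(D)-2\geq0$, and $K_X$ is nef, so $K_X+D$ is nef. It is also big, since $K_X$ is big and $D$ is effective. Thus $(X,D)$ is a log minimal pair of log general type, and the logarithmic Bogomolov--Miyaoka--Yau inequality $(K_X+D)^2\leq3\bar c_2(X,D)$ applies (Sakai, Kobayashi--Nakamura--Sakai). Substituting,
\[
\chi\leq3\chi(\Oo_X)-\bar c_2(X,D)=3\chi(\Oo_X)-c_2(X)-(2g(D)-2).
\]
Next, Noether's formula together with $K_X^2\leq3c_2(X)$ gives $12\chi(\Oo_X)\leq4c_2(X)$, that is, $c_2(X)\geq3\chi(\Oo_X)$. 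Hence $\chi\leq-(2g(D)-2)$, which is negative when $g(D)\geq2$.

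For $g(D)=1$ the bound is only $\chi\leq0$, so I would analyse the equality case. Equality forces $c_2(X)=3\chi(\Oo_X)$, i.e. $K_X^2=3c_2(X)$. Then $X$ is a ball quotient, and its universal cover is the ball. An elliptic curve cannot map nonconstantly to a ball quotient, because the map lifts to $\CC\to\BB^2$, which is constant by Liouville. This contradicts $D\subset X$, so $\chi<0$.

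\emph{Rational boundary.} Now assume $g(D)=0$. Put $k=K_X\cdot D\geq0$ and $\delta=\delta(X)=3c_2(X)-K_X^2$. Adjunction gives $D^2=-2-k$, and Theorem~\ref{RR} becomes
\[
\chi=-\tfrac54\delta+k+6.
\]
I would apply the orbibundle Miyaoka--Yau--Sakai inequality \cite[Theorem~1.3(i)]{MiyaokaOrbibundle} exactly as in Theorem~\ref{kappa-one-negative-multisection}. The minimum of the resulting quadratic in $\alpha$ lies beyond $1$, so $\alpha=1$ is optimal, and it yields $k+4\leq\delta$. Then $\chi<0$ as soon as $\delta>\tfrac45(k+6)$. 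This holds automatically when $k>4$, since $\delta\geq k+4>\tfrac45(k+6)$ in that range.

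\emph{Main obstacle.} The difficulty is the residual range $k\leq4$ with small defect. Noether's formula gives $\delta=36\chi(\Oo_X)-4K_X^2$, so $\delta$ is divisible by $4$. The surviving cases are $\delta\in\{4,8\}$, i.e. $K_X^2=9\chi(\Oo_X)-1$ or $9\chi(\Oo_X)-2$. These contain $(-2)$-curves ($k=0$) and curves with small $k$.

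For these cases I would first try Miyaoka's sharper bound for rational curves on surfaces near the Miyaoka--Yau line. For a $(-2)$-curve, the bound on the number of disjoint $(-2)$-curves, essentially $\tfrac23\delta$, should be complemented by the exact orbifold Chern number count, aiming to force $\delta\geq9$. If that fails, I would fall back on condition~(3) of Corollary~\ref{numerical} together with Proposition~\ref{residue-zero}. There I would take $A$ a general member of a very ample system meeting $D$ transversely. The point is to produce $\mathrm{H}^1\neq0$ directly when $\chi\in\{0,1\}$.
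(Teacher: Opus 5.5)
\textbf{Assessment: your proof has a gap in the rational-boundary case, which you never close.}

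The positive-genus case is the paper's own argument: log BMY for the nef and big divisor $K_X+D$, then the ball-quotient/Liouville exclusion when $g(D)=1$. Your formula $\chi=-\tfrac54\delta+k+6$ for a rational boundary is also correct.

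The problem is that you only use the $\alpha=1$ instance of \cite[Theorem~1.3(i)]{MiyaokaOrbibundle}, which gives $\delta\ge k+4$. Combined with $4\mid\delta$, exactly two cases survive:
\begin{itemize}
\item $(\delta,k)=(4,0)$, a $(-2)$-curve with $\chi=1$;
\item $(\delta,k)=(8,4)$, a $(-6)$-curve with $\chi=0$.
\end{itemize}

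What you propose for these does not amount to a proof. A count of disjoint $(-2)$-curves cannot reach the $(-6)$-curve case. The constant is also off: Miyaoka's bound is $n\le\tfrac23\bigl(c_2-\tfrac13K_X^2\bigr)=\tfrac29\delta$, not $\tfrac23\delta$. With your constant, a single $(-2)$-curve at $\delta=4$ would not even be excluded.

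The fallback through Proposition~\ref{residue-zero} with $A$ very ample gives you no way to check $\mathrm{H}^0(\mathcal{E}\!nd_0(\Tt_X(-\log D)))\cong\mathrm{H}^0(\Rr_A)$. Moreover, $\Tt_X(-\log D)_{|A}$ has degree $-(K_X+D)\cdot A<0$ on a surface of general type. So even $\mathrm{H}^0(A,\Tt_X(-\log D)_{|A})\neq0$ is doubtful.

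\textbf{The missing ingredient} is the sharp bound for smooth rational curves that the paper takes from \cite[Remark~A]{MiyaokaOrbibundle}. With $m=-D^2=k+2$, it reads $k\le\lfloor\delta-4-\tfrac1m\rfloor=\delta-5$.
\begin{itemize}
\item This amounts to nonnegativity of your quadratic $\alpha^2(k+2)-2\alpha(k+3)+\delta$ at its critical point $\alpha=\tfrac{m+1}{m}$, which gives $\delta\ge k+4+\tfrac1m$.
\item That point lies outside $[0,1]$, so the bound is not available from Theorem~1.3(i) as you apply it.
\end{itemize}

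You can obtain it from the orbifold count you already mention, carried out for arbitrary $m$ rather than only $m=2$:
\begin{itemize}
\item Contracting $D$ by $\sigma\colon X\to X'$ yields a surface with one $\tfrac1m(1,1)$ point.
\item $\sigma^*K_{X'}=K_X+\tfrac{m-2}{m}D$ is nef and big.
\item The orbifold Bogomolov--Miyaoka--Yau inequality $K_{X'}^2\le 3e_{\mathrm{orb}}(X')$ then reads $K_X^2+\tfrac{(m-2)^2}{m}\le 3\bigl(c_2(X)-2+\tfrac1m\bigr)$.
\item Equivalently $\delta\ge m+2+\tfrac1m$, and since $\delta$ is an integer, $\delta\ge m+3=k+5$.
\end{itemize}

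This removes both residual cases at once. From $\delta\ge k+5\ge5$ and $4\mid\delta$ you get $\delta\ge8$. Then $\chi\le-\tfrac54\delta+(\delta-5)+6=1-\tfrac{\delta}{4}\le-1$, and Corollary~\ref{numerical} applies.
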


\begin{proof}
Assume first that $g(D)\ge 1$. For every irreducible curve $C\neq D$, one has $(K_X+D)\cdot C\geq0$, because $K_X$ is nef. The adjunction formula gives
\[
(K_X+D)\cdot D=2g(D)-2\geq0.
\]
Thus $K_X+D$ is nef; it is big because $K_X$ is big. The logarithmic Bogomolov--Miyaoka--Yau (BMY) inequality gives
\begin{equation}\label{log-BMY-general-type}
(K_X+D)^2\leq3\bar c_2(X,D).
\end{equation}
Moreover, the adjunction formula gives
\begin{equation}\label{log-c2-adjunction}
\bar c_2(X,D)=c_2(X)+K_X\cdot D+D^2 =c_2(X)+2g(D)-2.
\end{equation}
By Theorem~\ref{RR} and \eqref{log-BMY-general-type},
\begin{align*}
\chi\bigl(\mathcal{E}\!nd_0(\Tt_X(-\log D))\bigr)&\leq3\chi(\Oo_X)-\bar c_2(X,D)\\
&=-\frac14\bigl(\delta(X)+8g(D)-8\bigr).
\end{align*}
If $g(D)\geq2$, this is strictly negative. If $g(D)=1$, the right-hand side is strictly negative unless $\delta(X)=0$, equivalently, $K_X^2=3c_2(X)$. In this equality case, the equality characterization in the Miyaoka--Yau theorem \cite[Chapter~VII]{BHPV} implies that $X$ is a compact quotient of the complex two-ball $\BB^2$. In particular, $X$ contains no elliptic curve. Indeed, if a nonconstant morphism $\varphi\colon E\rightarrow X$ existed from an elliptic curve $E$, then its composition with the universal covering map $\CC\rightarrow E$ would lift to a holomorphic map
\[
\widetilde\varphi\colon\CC\rightarrow\mathbb B^2.
\]
Since $\BB^2\subset\CC^2$ is bounded, both coordinate functions of $\widetilde\varphi$ are bounded entire functions and are therefore constant. Hence $\varphi$ is constant, a contradiction. Thus the equality case cannot occur in the presence of the elliptic curve $D$, and consequently
\[
\chi\bigl(X,\mathcal E\!nd_0(\Tt_X(-\log D))\bigr)<0.
\]
Corollary~\ref{numerical} now applies.

Now assume that $g(D)=0$, i.e. $D$ is a smooth rational curve. Then we have $(K_X+D)\cdot D=-2$, so $K_X+D$ is not nef and logarithmic BMY inequality cannot be applied directly. The adjunction formula and Theorem~\ref{RR} give
\begin{equation}\label{kappa-two-rational-chi}
\chi\bigl(\mathcal{E}\!nd_0(\Tt_X(-\log D))\bigr)=-\frac54\delta(X)+4-D^2.
\end{equation}
Since $K_X$ is nef, adjunction implies $D^2\leq-2$. A necessary numerical condition for rigidity is therefore
\begin{equation}\label{kappa-two-rational-necessary}
-D^2+4\geq\frac54\delta(X).
\end{equation}
Miyaoka's sharp inequality for a smooth rational curve on a minimal nonruled surface gives, with $m=-D^2$,
\begin{equation}\label{miyaoka-rational-curve}
K_X\cdot D=m-2\leq \left\lfloor 3c_2(X)-K_X^2-4-\frac1m \right\rfloor =\delta(X)-5;
\end{equation}
see \cite[Remark A]{MiyaokaOrbibundle}. In particular, $m\leq\delta(X)-3$. Noether's formula shows that
\[
\delta(X)=3c_2(X)-K_X^2=4\bigl(c_2(X)-3\chi(\Oo_X)\bigr)
\]
is divisible by four. Inequality \eqref{miyaoka-rational-curve} and $m\geq2$ imply $\delta(X)\geq m+3\geq5$, hence in fact $\delta(X)\geq8$. Combining $m\leq\delta(X)-3$ with \eqref{kappa-two-rational-chi} gives
\[
\chi\bigl(\mathcal{E}\!nd_0(\Tt_X(-\log D))\bigr) \leq-\frac54\delta(X)+4+\delta(X)-3 =1-\frac14\delta(X)<0.
\]
Corollary~\ref{numerical} now applies.
\end{proof}

\begin{proof}[Proof of Theorem \ref{minimal-surface-pairs}]
If $\kappa(X)=-\infty$, then $X$ is either rational or ruled over a curve of positive genus. The rational minimal surfaces are $\PP^2$ and $\FF_e$ with $e=0$ or $e\geq2$. The irrational ruled case is Theorem~\ref{irr-ruled-complete}. Minimal surfaces of Kodaira dimensions $0$, $1$, and $2$ are covered, respectively, by Theorem~\ref{kappa-zero}, Corollary~\ref{kappa-one-classification}, and Theorem~\ref{kappa-two}. These cases exhaust the Enriques--Kodaira classification.
\end{proof}


\section{Blow-ups}\label{blow-up}

Let $\pi\colon Y\rightarrow X$ be the blow-up of a point $p$, let $E$ be its exceptional curve, and let $D_Y$ be the reduced strict transform of $D$. The pair $(Y,D_Y)$ is again smooth.

\begin{proposition}\label{blowup-chern}
The logarithmic Chern numbers transform according to the following table:
\[
\renewcommand{\arraystretch}{1.4}
\setlength{\arraycolsep}{14pt}
\begin{array}{c|c|c}
\hline
&\bar c_1^2(Y,D_Y) &\bar c_2(Y,D_Y) \\ \hline \hline
p\notin D & \bar c_1^2(X,D)-1 &\bar c_2(X,D)+1 \\ \hline
p\in D&\bar c_1^2(X,D)&\bar c_2(X,D)+1 \\ \hline
\end{array}
\]
\end{proposition}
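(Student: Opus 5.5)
We compute directly with the standard blow-up formulas: $K_Y=\pi^*K_X+E$, $c_2(Y)=c_2(X)+1$, $E^2=-1$, and $\pi^*\alpha\cdot E=0$ for every class $\alpha$ on $X$. We split according to whether $p$ lies on $D$. Since $D$ is smooth, the multiplicity of $D$ at $p$ is $m\in\{0,1\}$, so $D_Y=\pi^*D-mE$. Both cases can therefore be treated at once with $m$ as a parameter, and at the end we set $m=0$ or $m=1$.

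First we compute $\bar c_1^2$. We have $K_Y+D_Y=\pi^*(K_X+D)+(1-m)E$. Squaring and using $\pi^*\alpha\cdot E=0$ gives
\[
(K_Y+D_Y)^2=(K_X+D)^2-(1-m)^2 .
\]
This equals $\bar c_1^2(X,D)-1$ for $m=0$ and $\bar c_1^2(X,D)$ for $m=1$. That is the first column of the table.

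Next we compute $\bar c_2$, using \eqref{chern}: $\bar c_2(Y,D_Y)=c_2(Y)+K_Y\cdot D_Y+D_Y^2$. We have
\[
K_Y\cdot D_Y=(\pi^*K_X+E)\cdot(\pi^*D-mE)=K_X\cdot D+m
\]
and $D_Y^2=D^2-m^2$. Adding $c_2(Y)=c_2(X)+1$ gives
\[
\bar c_2(Y,D_Y)=\bar c_2(X,D)+1+m-m^2 .
\]
For $m\in\{0,1\}$ we have $m-m^2=0$, so $\bar c_2$ always increases by exactly $1$. This is the second column.

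As a cross-check, $\bar c_2$ is the topological Euler number of $Y\setminus D_Y$, and blowing up replaces $p$ by $E\cong\PP^1$. If $p\notin D$, this adds $2-1=1$. If $p\in D$, it adds $\PP^1$ minus one point, which again contributes $1$.

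There is no real obstacle in this argument. The only point needing care is sign bookkeeping, in particular in $K_Y\cdot D_Y$, where $E\cdot(-mE)=m$. One should also note that smoothness of $D$ forces $m\le1$, so the strict transform is $\pi^*D-mE$ with no higher multiplicity to track.
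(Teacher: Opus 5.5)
Your proof is correct and is essentially the paper's argument: the same blow-up formulas $K_Y=\pi^*K_X+E$, $c_2(Y)=c_2(X)+1$, $E^2=-1$, with the two cases merged into one computation via the multiplicity parameter $m\in\{0,1\}$. Your topological cross-check, using that $\bar c_2$ equals the Euler number of $Y\setminus D_Y$, does not appear in the paper's proof but confirms the second column independently.
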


\begin{proof}
If $p\notin D$, then $K_Y+D_Y=\pi^*(K_X+D)+E$. If $p\in D$, then $D_Y=\pi^*D-E$ and
\[
K_Y+D_Y=\pi^*(K_X+D).
\]
Because $\pi^*(K_X+D)\cdot E=0$ and $E^2=-1$, squaring gives
\[
(K_Y+D_Y)^2=(K_X+D)^2-1
\]
off the boundary, and $(K_Y+D_Y)^2=(K_X+D)^2$ on the boundary, explaining $\bar c_1^2(Y,D_Y)$.

For $\bar c_2(Y,D_Y)$, recall $c_2(Y)=c_2(X)+1$. If $p\notin D$, then $D_Y=\pi^*D$, so
\[
K_Y\cdot D_Y=(\pi^*K_X+E)\cdot\pi^*D=K_X\cdot D, \qquad D_Y^2=D^2.
\]
Therefore $\bar c_2(Y,D_Y)$ increases by one. If $p\in D$, then $D_Y=\pi^*D-E$. Using $\pi^*L\cdot E=0$ for every divisor $L$ on $X$ and $E^2=-1$, we calculate
\begin{align*}
K_Y\cdot D_Y&=(\pi^*K_X+E)\cdot(\pi^*D-E)=K_X\cdot D+1,\\
D_Y^2&=(\pi^*D-E)^2=D^2-1.
\end{align*}
The two corrections cancel, while $c_2$ again increases by one.
\end{proof}

\begin{corollary}\label{blowup-chi}
If $p\notin D$, we have 
\[
\chi\bigl(\mathcal{E}\!nd_0(\Tt_Y(-\log D_Y))\bigr)=\chi\bigl(\mathcal{E}\!nd_0(\Tt_X(-\log D))\bigr)-5.
\]
Under a blow-up at a point of $D$, the change is $-4$.
\end{corollary}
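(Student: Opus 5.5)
The plan is to substitute the table of Proposition~\ref{blowup-chern} directly into the first form of Theorem~\ref{RR}, namely
\[
\chi\bigl(\mathcal{E}\!nd_0(\Tt_X(-\log D))\bigr)=3\chi(\Oo_X)+\bar c_1^2(X,D)-4\bar c_2(X,D).
\]
First, we note that $\chi(\Oo_Y)=\chi(\Oo_X)$, since $\chi(\Oo)$ is a birational invariant of smooth projective surfaces. Equivalently, Noether's formula applies: $K_Y^2=K_X^2-1$ and $c_2(Y)=c_2(X)+1$, so $K^2+c_2$ is unchanged. Hence the term $3\chi(\Oo)$ contributes no change.

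Next we treat the two cases separately. If $p\notin D$, Proposition~\ref{blowup-chern} gives $\bar c_1^2(Y,D_Y)=\bar c_1^2(X,D)-1$ and $\bar c_2(Y,D_Y)=\bar c_2(X,D)+1$. The change in $\chi$ is therefore $-1-4\cdot 1=-5$. If $p\in D$, then $\bar c_1^2$ is unchanged and $\bar c_2$ increases by one, so the change is $-4$.

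As a cross-check, we could instead use the second form $\frac54(K^2-3c_2)-2K\cdot D-3D^2$. Off the boundary, $K^2-3c_2$ drops by $4$, giving a change of $-5$, while $K\cdot D$ and $D^2$ are unchanged. On the boundary, the change is $-5-2(+1)-3(-1)=-4$, which agrees with the first computation.

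There is no real obstacle. The only point needing care is the invariance of $\chi(\Oo_X)$ under blow-up, and that is standard.
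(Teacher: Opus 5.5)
Your proposal is correct and follows the paper's own proof: substitute the table of Proposition~\ref{blowup-chern} into the first form of Theorem~\ref{RR}, and use the birational invariance of $\chi(\Oo_X)$. Your cross-check via $\frac54(K_X^2-3c_2(X))-2K_X\cdot D-3D^2$ is also computed correctly. It is a harmless extra that the paper does not include.
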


\begin{proof}
The holomorphic Euler characteristic of the structure sheaf is birationally invariant, so $\chi(\Oo_Y)=\chi(\Oo_X)$. In the off-boundary case, Proposition~\ref{blowup-chern} changes the two nonconstant terms in \eqref{RR-log} by $-1$ and $-4$, respectively; their total contribution is $-1-4=-5$. In the on-boundary case, $\bar c_1^2$ is unchanged and $-4\bar c_2$ changes by $-4$. These are the asserted formulas.
\end{proof}

\begin{corollary}\label{many-blowups}
Let us consider a sequence of blow-ups:
\[
(X_n,D_n)\xrightarrow{~\pi_n~}\cdots \xrightarrow{~\pi_2~}(X_1,D_1) \xrightarrow{~\pi_1~}(X_0,D_0)=(X,D)
\]
with $p_i\in X_{i-1}$ the center of $\pi_i$ and $D_i$ the strict transform of $D_{i-1}$ for each $i$. Suppose that exactly $r$ of the centers lie outside the corresponding boundary and exactly $s$ lie on it; that is,
\[
\#\{i \mid p_i\notin D_{i-1}\}=r, \qquad \#\{i \mid p_i\in D_{i-1}\}=s, \qquad n=r+s.
\]
If the following inequality holds
\[
5r+4s>\chi\bigl(X,\mathcal{E}\!nd_0(\Tt_X(-\log D))\bigr),
\]
then $\Tt_{X_n}(-\log D_n)$ admits a positive-dimensional deformation with fixed determinant. In particular, this conclusion holds after any sequence of blow-ups for which the displayed inequality is satisfied.
\end{corollary}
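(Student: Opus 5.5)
The plan is to iterate Corollary~\ref{blowup-chi} along the sequence and then apply Corollary~\ref{numerical}(1).

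First, note that each stage $(X_i,D_i)$ is again a smooth surface pair, since the strict transform of a smooth divisor under a point blow-up is smooth; the reduced strict transform is what Corollary~\ref{blowup-chi} uses. Applying Corollary~\ref{blowup-chi} to $\pi_i$ gives the following: the Euler characteristic of the trace-free endomorphism bundle drops by exactly $5$ if $p_i\notin D_{i-1}$, and by exactly $4$ if $p_i\in D_{i-1}$. This holds regardless of whether earlier centers lay on earlier exceptional curves, because the corollary only uses $\chi(\Oo)$ invariance and the local Chern number table of Proposition~\ref{blowup-chern}, which applies to any point blow-up of a smooth pair.

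Summing these changes by induction on $n$ yields
\[
\chi\bigl(\mathcal{E}\!nd_0(\Tt_{X_n}(-\log D_n))\bigr)=\chi\bigl(\mathcal{E}\!nd_0(\Tt_X(-\log D))\bigr)-5r-4s.
\]
The displayed hypothesis $5r+4s>\chi(X,\mathcal{E}\!nd_0(\Tt_X(-\log D)))$ is then exactly the statement that the right-hand side is strictly negative. Corollary~\ref{numerical}(1), applied to the pair $(X_n,D_n)$, combined with Proposition~\ref{kuranishi}, then gives a positive-dimensional fixed-determinant deformation. The final sentence of the statement is just a restatement of this conclusion.

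No step is a real obstacle. The only point needing care is confirming that Corollary~\ref{blowup-chi} applies at each stage, in particular when $D$ is possibly disconnected. This is fine, since Theorem~\ref{RR} and Proposition~\ref{blowup-chern} were stated for arbitrary smooth pairs.
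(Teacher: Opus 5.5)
Your proposal is correct and matches the paper's proof: iterate Corollary~\ref{blowup-chi} to get the drop $5r+4s$ in the Euler characteristic, then apply Corollary~\ref{numerical}(1) together with Proposition~\ref{kuranishi}. The paper leaves implicit that each intermediate pair is again a smooth surface pair, so that Proposition~\ref{blowup-chern} applies at every stage; your explicit check of this is a harmless addition.
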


\begin{proof}
Iterating Corollary~\ref{blowup-chi} gives
\[
\chi\bigl(\mathcal{E}\!nd_0(\Tt_{X_n}(-\log D_n))\bigr)=\chi\bigl(\mathcal{E}\!nd_0(\Tt_X(-\log D))\bigr)-5r-4s.
\]
The displayed numerical hypothesis makes the right-hand side strictly negative. Condition~\textup{(1)} of Corollary~\ref{numerical}, followed by Proposition~\ref{kuranishi}, then yields a positive-dimensional fixed-determinant deformation.
\end{proof}

The preceding numerical statement can be strengthened by adapting the generalized-logarithmic-sheaf argument in \cite{HMPV}. We record the resulting one-point comparison. Since a bundle and its dual have the same endomorphism bundle, we work on the logarithmic cotangent side.

\begin{theorem}\label{log-blowup-comparison}
Let $(X,D)$ be a smooth projective surface pair, let $\pi\colon Y\rightarrow X$ be the blow-up of a point $p$, and let $D_Y$ be the strict transform of $D$. There is an exact sequence
\begin{equation}\label{end-blowup-push}
0\to \pi_*\mathcal{E}\!nd_0(\Omega_Y^1(\log D_Y)) \to \mathcal{E}\!nd_0(\Omega_X^1(\log D))\to Q_p\to 0,
\end{equation}
where $Q_p$ is supported at $p$. Moreover:
\begin{itemize}
\item [(i)] if $p\in D$, then $\mathrm{length}(Q_p)=3$ and $R^1\pi_*\mathcal{E}\!nd_0(\Omega_Y^1(\log D_Y))\cong\Oo_p$;
\item [(ii)] if $p\notin D$, then $\mathrm{length}(Q_p)=3$ and $\mathrm{length}(R^1\pi_*\mathcal{E}\!nd_0(\Omega_Y^1(\log D_Y)))=2$.
\end{itemize}
In particular, the natural maps give
\[
\mathrm{H}^1(X,\pi_*\mathcal{E}\!nd_0(\Omega_Y^1(\log D_Y)))\twoheadrightarrow \mathrm{H}^1(X,\mathcal{E}\!nd_0(\Omega_X^1(\log D)))
\]
and
\[
\mathrm{H}^1(X,\pi_*\mathcal{E}\!nd_0(\Omega_Y^1(\log D_Y)))\hookrightarrow \mathrm{H}^1(Y,\mathcal{E}\!nd_0(\Omega_Y^1(\log D_Y))).
\]
Consequently, we obtain
\begin{equation}\label{blowup-monotonicity}
\mathrm{h}^1(Y,\mathcal{E}\!nd_0(\Omega_Y^1(\log D_Y)))\geq \mathrm{h}^1(X,\mathcal{E}\!nd_0(\Omega_X^1(\log D))).
\end{equation}
If $\mathrm{H}^2(X,\mathcal{E}\!nd_0(\Omega_X^1(\log D)))=0$, then the inequality is strict: the increase is at least one when $p\in D$ and at least two when $p\notin D$.
\end{theorem}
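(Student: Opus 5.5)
The plan is to realize both logarithmic cotangent bundles inside a common sheaf on $Y$ and to compare them as elementary transformations along $E$. Let $\Ff=\Omega_Y^1(\log(D_Y+E))$ when $p\in D$, and $\Ff=\Omega_Y^1(\log E)$ when $p\notin D$. Then both $\pi^*\Omega_X^1(\log D)$ and $\Omega_Y^1(\log D_Y)$ are subsheaves of $\Ff$ with cokernels line bundles on $E$; the second cokernel is $\Oo_E$ via the residue.

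In the chart $y=xv$ with $D=\{y=0\}$ this is explicit. There $\pi^*\Omega_X^1(\log D)=\langle dx,\ dx/x+dv/v\rangle$, $\Omega_Y^1(\log D_Y)=\langle dx,\ dv/v\rangle$, and $\Ff=\langle dx/x,\ dv/v\rangle$; the case $p\notin D$ is the analogous computation with $dx,dy$. Since $\pi$ is an isomorphism off $E$, the sheaf $\pi_*\mathcal E\!nd_0(\Omega_Y^1(\log D_Y))$ is torsion free and agrees with the locally free sheaf $\mathcal E\!nd_0(\Omega_X^1(\log D))$ away from $p$. By Hartogs extension it embeds into it, with cokernel $Q_p$ supported at $p$. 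This gives \eqref{end-blowup-push}.

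Next I compute $R^1\pi_*$ by the theorem on formal functions, starting from the splitting type on $E$. For $p\notin D$, the conormal sequence $0\to\Oo_E(1)\to\Omega^1_{Y|E}\to\Omega^1_E\to0$ splits, so $\Omega^1_{Y|E}\cong\Oo_E(1)\oplus\Oo_E(-2)$. Hence $\mathcal E\!nd_0$ restricts to $\Oo_E(3)\oplus\Oo_E\oplus\Oo_E(-3)$, whose $h^1$ is $2$. For $p\in D$, the chart computation should give $\Omega_Y^1(\log D_Y)_{|E}\cong\Oo_E(1)\oplus\Oo_E(-1)$. Then $\mathcal E\!nd_0$ restricts to $\Oo_E(2)\oplus\Oo_E\oplus\Oo_E(-2)$, with $h^1=1$.

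The main obstacle is showing that the infinitesimal neighbourhoods $kE$ contribute nothing further. I would filter by $0\to \mathcal E\!nd_0{}_{|E}\otimes\Oo_E(k)\to \mathcal E\!nd_0{}_{|(k+1)E}\to\mathcal E\!nd_0{}_{|kE}\to0$, using $\Oo_E(-E)=\Oo_E(1)$. For $k\ge1$ the twisted pieces have $h^1=0$, except that $\Oo_E(-3)\otimes\Oo_E(1)$ in the $p\notin D$ case does not vanish. There one must check that the connecting map from $H^0$ of the previous stage kills that class, or compute it directly in coordinates. I would do the latter using the explicit transition functions on the two standard charts of the blow-up. The expected result is $R^1\pi_*\cong\Oo_p$ in case (i), and length $2$ in case (ii).

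With $R^1$ known, the length of $Q_p$ is forced by Euler characteristics. Leray gives $\chi(\pi_*\mathcal E\!nd_0)=\chi(Y,\mathcal E\!nd_0)+\mathrm{length}\,R^1$, and \eqref{end-blowup-push} gives $\chi(X,\mathcal E\!nd_0)=\chi(\pi_*\mathcal E\!nd_0)+\mathrm{length}\,Q_p$. Corollary~\ref{blowup-chi} gives $\chi(X)-\chi(Y)=4$ or $5$, which yields $\mathrm{length}\,Q_p=3$ in both cases.

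The cohomological statements then follow formally. Since $H^1(Q_p)=0$, the map $H^1(\pi_*)\to H^1(X)$ is surjective. The Leray five-term sequence gives $H^1(X,\pi_*)\hookrightarrow H^1(Y)$. Together these give \eqref{blowup-monotonicity}.

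For strictness under $H^2(X,\mathcal E\!nd_0)=0$, I use $h^0(Y)=h^0(\pi_*)\ge h^0(X)-3$, which holds because $\mathrm{length}\,Q_p=3$. Writing $h^i$ for the dimensions of the respective $\mathcal E\!nd_0$ cohomologies, the two Riemann--Roch identities give
\[
h^1(Y)-h^1(X)=\bigl(h^0(Y)-h^0(X)\bigr)+h^2(Y)+\bigl(\chi(X)-\chi(Y)\bigr)\ \ge\ -3+\chi(X)-\chi(Y).
\]
This is at least $1$ for $p\in D$ and at least $2$ for $p\notin D$.
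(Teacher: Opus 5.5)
\textbf{Case (i) is fine; case (ii) has a gap.} Case (i) goes through as you describe. For $k\ge1$ the pieces $\Oo_E(2+k)\oplus\Oo_E(k)\oplus\Oo_E(k-2)$ have no $H^1$, so formal functions give $R^1\pi_*\mathcal{E}\!nd_0\cong\Oo_p$ directly. The Euler-characteristic identity $\mathrm{length}(Q_p)+\mathrm{length}(R^1\pi_*\mathcal{E}\!nd_0)=\chi(X,\mathcal{E}\!nd_0)-\chi(Y,\mathcal{E}\!nd_0)=4$ then forces $\mathrm{length}(Q_p)=3$. This runs in the opposite direction from the paper, which computes $Q_p$ by conjugating a trace-free matrix through the blow-up charts and then deduces $R^1$.

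Case (ii) is where your direction fails, and you leave the decisive step as an ``expected result.'' Your filtration gives $\mathrm{length}(R^1\pi_*\mathcal{E}\!nd_0)=3-\operatorname{rank}\delta\in\{2,3\}$, where $\delta\colon H^0(E,\mathcal{E}\!nd_0(\Omega^1_Y)_{|E})\to H^1(E,\Oo_E(-2))\cong\CC$ is the connecting map at the first thickening. Correspondingly $\mathrm{length}(Q_p)=5-\mathrm{length}(R^1\pi_*\mathcal{E}\!nd_0)\in\{3,2\}$. You obtain $Q_p$ only from $R^1$, so neither number asserted in (ii) is actually proved.

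\textbf{How to repair it.} Reverse the direction, as the paper does. In the chart $x=u$, $y=uv$ one has $(dx,dy)^T=P\,(du,dv)^T$ with $P=\begin{pmatrix}1&0\\ v&u\end{pmatrix}$. For $A=\begin{pmatrix}a&b\\ c&-a\end{pmatrix}$, the only possibly singular entry of $P^{-1}AP$ is $(c-2av-bv^2)/u$. Hence $A$ extends if and only if $a(p)=b(p)=c(p)=0$; the other chart gives the same conditions. So $\mathrm{length}(Q_p)=3$, Euler characteristics give $\mathrm{length}(R^1\pi_*\mathcal{E}\!nd_0)=2$, and this shows a posteriori that $\delta\neq0$.

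\textbf{The inequalities survive without the repair.} The transition maps on $H^1$ along the thickenings are surjective, so $\mathrm{length}(R^1\pi_*\mathcal{E}\!nd_0)\ge h^1(E,\mathcal{E}\!nd_0{}_{|E})=2$. Your Riemann--Roch estimate bounds the increase below by $\chi(X,\mathcal{E}\!nd_0)-\chi(Y,\mathcal{E}\!nd_0)-\mathrm{length}(Q_p)=\mathrm{length}(R^1\pi_*\mathcal{E}\!nd_0)$, hence by $2$. That RR-based strictness argument is a valid alternative to the paper's. The paper instead uses $\mathrm{H}^2(X,\pi_*\mathcal{E}\!nd_0)\cong\mathrm{H}^2(X,\mathcal{E}\!nd_0)=0$, so that the Leray sequence gives $h^1(Y)=h^1(X,\pi_*\mathcal{E}\!nd_0)+\mathrm{length}(R^1\pi_*\mathcal{E}\!nd_0)$.

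\textbf{A minor slip.} For $p\notin D$ one has $\pi^*\Omega^1_X=\Omega^1_Y(\log E)(-E)$, so its cokernel in $\Omega^1_Y(\log E)$ is a rank-two bundle on $E$, not a line bundle. This is harmless, because your exact sequence comes from the Hartogs argument and that framing is never used.
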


\begin{proof}
We give the local calculation, using the elementary-transform description of~\cite[Propositions~2.7 and~2.9]{HMPV}. When $p\in D$, those results show that $\Omega_X^1(\log(D,p)):=\pi_*\Omega_Y^1(\log D_Y)$ is the elementary transformation
\[
0\to \Omega_X^1(\log(D,p)) \to  \Omega_X^1(\log D) \to \Oo_p \to 0,
\]
and their formal-functions argument gives $R^1\pi_*\Omega_Y^1(\log D_Y)=0$. They also prove
\[
\Omega_Y^1(\log D_Y)_{|E_\pi}\cong \Oo_{\PP^1}(-1) \oplus\Oo_{\PP^1}(1),
\]
where $E_\pi$ denotes the exceptional curve.

Choose local coordinates $(x,y)$ at $p$ with $D=(x=0)$ and use the frame $\left(\frac{1}{x}d\!x,d\!y\right)$ of $\Omega_X^1(\log D)$. A trace-free endomorphism $A=\begin{pmatrix}a&b\\ c&-a\end{pmatrix}$ extends to an endomorphism of $\Omega_Y^1(\log D_Y)$ on the blow-up if and only if
\[
b(p)=0,\qquad \frac{\partial b}{\partial x}(p)=0,\qquad \frac{\partial b}{\partial y}(p)=2a(p).
\]
To obtain these conditions, use the two blow-up charts
\[
(x,y)=(u,uv),\qquad (x,y)=(st,s).
\]
In the first chart substitute
\[
\frac{dx}{x}=\frac{du}{u},\qquad dy=v\,du+u\,dv,
\]
and in the second chart substitute
\[
\frac{dx}{x}=\frac{dt}{t}+\frac{ds}{s},\qquad dy=ds.
\]
Conjugating the matrix $A$ by these two change-of-frame matrices, putting the entries over a common denominator, and collecting the coefficients of $u^{-1}$ and $s^{-1}$ gives precisely the three displayed conditions. Their classes are respectively the constant term of $b$, its $x$-linear term, and $b_y-2a$ modulo $(x,y)$, so they are independent. This gives \eqref{end-blowup-push} with $\mathrm{length}(Q_p)=3$ in the on-boundary case.

Furthermore,
\[
\mathcal{E}\!nd_0(\Omega_Y^1(\log D_Y))_{|{E_\pi}}\cong \Oo_{\PP^1}\oplus\Oo_{\PP^1}(2)\oplus\Oo_{\PP^1}(-2).
\]
The theorem on formal functions shows from this splitting that $R^1\pi_*\mathcal{E}\!nd_0(\Omega_Y^1(\log D_Y))$ is a finite-length sheaf supported at $p$. Its length can be obtained without leaving an uncomputed inverse limit. Taking Euler characteristics in \eqref{end-blowup-push} gives
\[
\chi(X,\pi_*\mathcal{E}\!nd_0(\Omega_Y^1(\log D_Y)))=\chi(X,\mathcal{E}\!nd_0(\Omega_X^1(\log D)))-3.
\]
Leray gives
\[
\chi(Y,\mathcal{E}\!nd_0(\Omega_Y^1(\log D_Y)))=\chi(X,\pi_*\mathcal{E}\!nd_0(\Omega_Y^1(\log D_Y)))-\operatorname{length}(R^1\pi_*\mathcal{E}\!nd_0(\Omega_Y^1(\log D_Y))).
\]
Corollary~\ref{blowup-chi} says that the left side is $\chi(X,\mathcal{E}\!nd_0(\Omega_X^1(\log D)))-4$. Hence the length is one. A length-one coherent sheaf supported at the reduced point $p$ is $\Oo_p$, proving the asserted isomorphism.

If $p\notin D$, the calculation is the ordinary point-blow-up calculation near $p$. One has
\[
\Omega_Y^1(\log D_Y)_{|{E_\pi}}\cong \Oo_{\PP^1}(-2)\oplus\Oo_{\PP^1}(1).
\]
Writing the blow-up in the same two affine charts and using the ordinary frames $(d\!x,d\!y)$ gives three independent first-jet conditions on a trace-free $2\times2$ matrix. Thus the quotient $Q_p$ again has length three. Formal functions applied to
\[
\mathcal{E}\!nd_0(\Omega_Y^1(\log D_Y))_{|{E_\pi}} \cong \Oo_{\PP^1}\oplus\Oo_{\PP^1}(3)\oplus\Oo_{\PP^1}(-3)
\]
shows that the higher direct image is supported at $p$. Its length is again forced by Euler characteristics: now Corollary~\ref{blowup-chi} gives
\[
\chi(Y,\mathcal{E}\!nd_0(\Omega_Y^1(\log D_Y)))=\chi(X,\mathcal{E}\!nd_0(\Omega_X^1(\log D)))-5,
\]
whereas the pushforward sequence contributes $-3$. Leray therefore contributes the remaining $-2$, so
$\mathrm{length}(R^1\pi_*\mathcal{E}\!nd_0(\Omega_Y^1(\log D_Y)))=2$. This is the rank-two logarithmic counterpart of Peternell's point-blow-up calculation.

Taking cohomology in \eqref{end-blowup-push}, and using that $Q_p$ has zero higher cohomology, gives the first displayed surjection. The five-term Leray sequence gives the displayed injection. This proves \eqref{blowup-monotonicity}. If $\mathrm{H}^2(X,\mathcal{E}\!nd_0(\Omega_X^1(\log D)))=0$, then also $\mathrm{H}^2(X,\pi_*\mathcal{E}\!nd_0(\Omega_Y^1(\log D_Y)))=0$, so the full stalk of $R^1\pi_*\mathcal{E}\!nd_0(\Omega_Y^1(\log D_Y))$ contributes to $\mathrm{H}^1(Y,\mathcal{E}\!nd_0(\Omega_Y^1(\log D_Y)))$.
\end{proof}

\begin{corollary}\label{compatible-blowups}
Suppose that $(Y,D_Y)\to(X,D)$ is obtained by a nonempty sequence of point blow-ups such that the reduced strict transform of the boundary remains smooth at every stage.
\begin{enumerate}[label=\textup{(\arabic*)}]
\item If $\Tt_X(-\log D)$ is infinitesimally nonrigid, then $\Tt_Y(-\log D_Y)$ is infinitesimally nonrigid.
\item If $\mathrm{H}^2\bigl(X,\mathcal E\!nd_0(\Tt_X(-\log D)) \bigr)=0$, then
\[
\mathrm{h}^1\bigl( Y,\mathcal E\!nd_0(\Tt_Y(-\log D_Y)) \bigr)>\mathrm{h}^1\bigl(X,\mathcal E\!nd_0(\Tt_X(-\log D))\bigr).
\]
\end{enumerate}
\end{corollary}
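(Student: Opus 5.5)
The plan is to reduce both parts to the one-step comparison of Theorem~\ref{log-blowup-comparison} and induct along the chain of blow-ups
\[
(Y,D_Y)=(X_n,D_n)\to\cdots\to(X_0,D_0)=(X,D).
\]
Since a vector bundle and its dual have canonically isomorphic endomorphism bundles, $\mathcal E\!nd_0(\Tt_{X_i}(-\log D_i))\cong\mathcal E\!nd_0(\Omega^1_{X_i}(\log D_i))$. Hence every statement in Theorem~\ref{log-blowup-comparison}, which is phrased on the cotangent side, transfers verbatim to the logarithmic tangent bundle. The smoothness hypothesis at each stage makes each $(X_i,D_i)$ a smooth surface pair, so Theorem~\ref{log-blowup-comparison} applies to each single blow-up $X_i\to X_{i-1}$, whether or not the centre lies on $D_{i-1}$.

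For part (1), I would apply \eqref{blowup-monotonicity} at each step to get
\[
\mathrm{h}^1\bigl(X_i,\mathcal E\!nd_0(\Tt_{X_i}(-\log D_i))\bigr)\geq \mathrm{h}^1\bigl(X_{i-1},\mathcal E\!nd_0(\Tt_{X_{i-1}}(-\log D_{i-1}))\bigr).
\]
Chaining these $n$ inequalities gives $\mathrm{h}^1$ on $Y$ at least $\mathrm{h}^1$ on $X$. So if the bundle on $X$ is nonrigid, the bundle on $Y$ is nonrigid too.

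For part (2), the hypothesis $\mathrm{H}^2=0$ is only given on $X=X_0$, so I would use it only at the first step. Theorem~\ref{log-blowup-comparison} then says the first blow-up increases $\mathrm{h}^1$ by at least one. The remaining $n-1$ steps are handled by the non-strict monotonicity from part (1), which needs no vanishing hypothesis. Composing gives the strict inequality
\[
\mathrm{h}^1\bigl(Y,\mathcal E\!nd_0(\Tt_Y(-\log D_Y))\bigr)>\mathrm{h}^1\bigl(X,\mathcal E\!nd_0(\Tt_X(-\log D))\bigr).
\]
The sequence is assumed nonempty, so this first step exists.

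The only point requiring care is the strictness claim in Theorem~\ref{log-blowup-comparison} itself, which I take as given. If I wanted to double-check it, I would argue as follows. The vanishing of $\mathrm{H}^2$ on $X$ passes to $\mathrm{H}^2(X,\pi_*\mathcal E\!nd_0)$, because the cokernel $Q_p$ is a finite-length sheaf. Then in the Leray five-term sequence the map
\[
\mathrm{H}^0\bigl(X,R^1\pi_*\mathcal E\!nd_0\bigr)\to\mathrm{H}^2\bigl(X,\pi_*\mathcal E\!nd_0\bigr)
\]
is zero, so $\mathrm{H}^0(R^1\pi_*)\neq0$ contributes fully to $\mathrm{H}^1$ on the blow-up. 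By Theorem~\ref{log-blowup-comparison}(i),(ii), $R^1\pi_*$ has length $1$ if $p\in D$ and length $2$ if $p\notin D$, so it is indeed nonzero. Beyond that, the argument is a formal induction.
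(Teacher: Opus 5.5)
Your proposal is correct and follows the paper's proof. For (1) you chain the weak inequality \eqref{blowup-monotonicity} of Theorem~\ref{log-blowup-comparison} along the sequence; for (2) you use the $\mathrm{H}^2$-vanishing only at the first blow-up to get a strict increase, then propagate it with the weak inequality. Your version of (2) is in fact slightly more complete than the paper's write-up: the paper only spells out the case where the initial $\mathrm{h}^1$ is zero, whereas you invoke the ``increase at least one'' clause directly and so get the strict inequality for an arbitrary initial $\mathrm{h}^1$.
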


\begin{proof}
Write the sequence as $(X_i,D_i)$ for $0\leq i\leq n$, with $(X_0,D_0)=(X,D)$ and $(X_n,D_n)=(Y,D_Y)$, as in Corollary \ref{many-blowups}. At the $i$-th blow-up,
Theorem~\ref{log-blowup-comparison} gives
\[
\mathrm{h}^1\bigl(X_{i+1},\mathcal{E}\!nd_0(\Tt_{X_{i+1}}(-\log D_{i+1}))\bigr) \geq \mathrm{h}^1\bigl(X_i,\mathcal{E}\!nd_0(\Tt_{X_i}(-\log D_i))\bigr).
\]
Induction shows that a positive initial group remains positive on $Y$. If the initial group is zero but the displayed $\mathrm{H}^2$-group also vanishes, Theorem~\ref{log-blowup-comparison} also says that the first blow-up has first cohomology of dimension at least one for a center on the boundary and at least two for a center away from it. Applying the weak inequality to all remaining blow-ups preserves this nonvanishing.
\end{proof}

The Key Restriction Lemma of~\cite[Lemma~2.2]{HMPV} gives a uniform version of the exceptional-curve calculation, including the cases in which contracting the exceptional curve creates a singular boundary.

\begin{proposition}\label{key-lemma-exceptional}
Let $(Y,D)$ be a smooth surface pair, and let $C\cong \PP^1$ be a $(-1)$-curve not contained in $D$. Write the scheme-theoretic intersection as
\[
D\cap C=\sum_{i=1}^{n}a_i p_i, \qquad m=\sum_{i=1}^{n}a_i,
\]
where the $p_i$ are distinct and $a_i\geq1$. Then there is an exact sequence
\begin{equation}\label{key-exceptional-sequence}
0\to \Oo_{\PP^1}(2-n) \to \Tt_Y(-\log D)_{|C} \to \Oo_{\PP^1}(n-m-1) \to 0.
\end{equation}
If $m+4\geq2n$ (in particular, if $n\leq4$), the sequence splits.
\end{proposition}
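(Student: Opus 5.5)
The plan is to obtain \eqref{key-exceptional-sequence} by restricting the logarithmic tangent bundle to $C$ and composing with the normal projection. The splitting criterion should then follow from a vanishing of $\mathrm{H}^1$ on $\PP^1$.

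\emph{Step 1: the restriction map.} Since $\Tt_Y(-\log D)\subset\Tt_Y$ is a subbundle away from $D$ and a subsheaf everywhere, restriction to $C$ gives a homomorphism
\[
\phi\colon \Tt_Y(-\log D)_{|C}\to \Tt_{Y|C}\to \Nn_{C/Y}\cong\Oo_{\PP^1}(-1).
\]
At points of $C\setminus D$ the bundle $\Tt_Y(-\log D)$ coincides with $\Tt_Y$, so $\phi$ is surjective there.

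\emph{Step 2: local computation at $p_i$.} Near $p_i$, choose analytic coordinates $(x,y)$ with $C=(y=0)$. Since $D$ meets $C$ with local intersection multiplicity $a=a_i$, we may write $D=(y-x^{a}u=0)$ with $u$ a unit, and after a coordinate change we may take $D=(y-x^a=0)$. This is the same local model as in the proof of Lemma~\ref{relative-logarithmic-sequence}, with the roles of the variables adjusted. A field $A\partial_x+B\partial_y$ is logarithmic if and only if
\[
B=a x^{a-1}A+(y-x^a)H .
\]
Restricting to $y=0$, the normal component becomes $B_{|C}=a x^{a-1}A-x^aH$. As $A$ and $H$ vary, these functions generate exactly the ideal $(x^{a-1})$. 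Hence the image of $\phi$ is
\[
\Nn_{C/Y}\Bigl(-\sum_i(a_i-1)p_i\Bigr)\cong\Oo_{\PP^1}(-1-(m-n))=\Oo_{\PP^1}(n-m-1).
\]
Replacing the target of $\phi$ by this image, $\phi$ becomes surjective.

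\emph{Step 3: the kernel.} The kernel of a surjection from a rank-two bundle on $\PP^1$ onto a line bundle is a line bundle, so its degree is determined by degrees. By \eqref{chern} the restricted bundle has degree
\[
\deg \Tt_Y(-\log D)_{|C}=-(K_Y+D)\cdot C=1-m .
\]
The kernel therefore has degree $(1-m)-(n-m-1)=2-n$. As a consistency check, the kernel is $\Tt_C(-p_1-\cdots-p_n)$, namely the tangential fields vanishing at the $p_i$; one sees this by setting $B_{|C}=0$ in the local model. This yields \eqref{key-exceptional-sequence}.

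\emph{Step 4: splitting.} The extension class lies in
\[
\Ext^1_{\PP^1}\bigl(\Oo_{\PP^1}(n-m-1),\Oo_{\PP^1}(2-n)\bigr)\cong\mathrm{H}^1\bigl(\PP^1,\Oo_{\PP^1}(m+3-2n)\bigr).
\]
This group vanishes precisely when $m+3-2n\geq-1$, that is, when $m+4\geq2n$. Since $m\geq n$, the condition $n\leq4$ gives $m+4\geq n+4\geq2n$, which is the parenthetical special case.

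The main obstacle is Step 2. One has to justify the normal form at a tangency point and check that the image of $\phi$ is exactly the ideal $(x^{a_i-1})$, neither smaller nor larger. The rest is degree bookkeeping.
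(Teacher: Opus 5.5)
Your proposal is correct and follows the paper's route. You use the same sequence, with kernel $\Tt_C(-p_1-\cdots-p_n)\cong\Oo_{\PP^1}(2-n)$ and quotient $\Nn_{C/Y}\bigl(-\sum_i(a_i-1)p_i\bigr)\cong\Oo_{\PP^1}(n-m-1)$, and the same vanishing of $\mathrm{H}^1(\PP^1,\Oo_{\PP^1}(m+3-2n))$ for the splitting. The only difference is that the paper takes the identification of the sub and quotient from the Key Restriction Lemma \cite[Lemma~2.2]{HMPV}, whereas you prove it directly: your local computation in the model $y=x^{a}$ is sound (for $a=1$ after a coordinate change fixing $C=(y=0)$), and you get the kernel from the degree count $-(K_Y+D)\cdot C=1-m$.
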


\begin{proof}
Set $B=(D\cap C)_{\mathrm{red}}=p_1+\cdots+p_n$, so that $\deg B=n$. The Key Restriction Lemma gives
\[
0\to \Tt_C(-\log B)\cong \Oo_{\PP^1}(2-n) \to \Tt_Y(-\log D)_{|C} \to \Nn^{\log}_{C|Y} \to 0.
\]
The same lemma compares the logarithmic normal sheaf $\Nn^{\log}_{C|Y}$ with $\Nn_{C|Y}\cong \Oo_{\PP^1}(-1)$ and shows that the quotient has length $\sum_i(a_i-1)=m-n$. Consequently, we obtain
\[
\Nn^{\log}_{C|Y}\cong \Oo_{\PP^1}(n-m-1),
\]
which proves \eqref{key-exceptional-sequence}. Its extension group is $\mathrm{H}^1\bigl(\PP^1,\Oo_{\PP^1}(m+3-2n)\bigr)$, and this vanishes when $m+3-2n\geq-1$.
\end{proof}

For transverse intersections, one has $m=n$, and Proposition \ref{key-lemma-exceptional} gives, for $n\leq4$,
\begin{equation}\label{transverse-exceptional-splitting}
\Tt_Y(-\log D)_{|C} \cong \Oo_{\PP^1}(2-n)\oplus\Oo_{\PP^1}(-1).
\end{equation}
Thus the familiar cases $n=0$ and $n=1$ are $\Oo_{\PP^1}(2)\oplus\Oo_{\PP^1}(-1)$ and $\Oo_{\PP^1}(1)\oplus\Oo_{\PP^1}(-1)$, respectively. The remaining cases are
\[
\begin{array}{c|c|c|c}
 n & 2 & 3 & 4 \\ \hline
 \Tt_Y(-\log D)_{|C}
 & \Oo_{\PP^1}\oplus\Oo_{\PP^1}(-1)
 &\Oo_{\PP^1}(-1)\oplus\Oo_{\PP^1}(-1)
 & \Oo_{\PP^1}(-2)\oplus\Oo_{\PP^1}(-1)
\end{array}
\]
Unlike $n=0,1$, these three splitting types have 
\[
\mathrm{H}^1\bigl(C,\mathcal{E}\!nd_0(\Tt_Y(-\log D)_{|C})\bigr)=0.
\]
When $C$ is contracted, its $n$ transverse intersection points with $D$ map to a single point of the image boundary: this is an ordinary node for $n=2$ and an ordinary $n$-fold point for $n\geq3$. The preceding vanishing shows that the restriction to $C$ supplies no nonzero first-cohomology class supported on the exceptional curve. Thus, in these cases, nonrigidity cannot be deduced from the exceptional curve alone; one must instead study the deformations of the generalized logarithmic sheaf associated with the singular image boundary.

If $n=1$ and the contact order is $m\geq1$, the sequence always splits as
\[
\Tt_Y(-\log D)_{|C}\cong \Oo_{\PP^1}(1)\oplus\Oo_{\PP^1}(-m).
\]
Hence we obtain $\mathrm{h}^1\bigl(C,\mathcal{E}\!nd_0(\Tt_Y(-\log D)_{|C})\bigr)=m$. Thus a tangential one-branch contraction has nonzero first cohomology on the exceptional curve, whereas a transverse contraction with two, three, or four branches has none. This is a distinction at the level of the exceptional restriction. To turn the former classes into nonzero local higher-direct-image classes, one must additionally compare the cohomology of $C$ with that of its infinitesimal neighborhoods. This comparison is the local issue that enters the classification on nonminimal surfaces.


\subsection{The ordinary nodal contraction}

We determine the local comparison in the first transverse multibranch case. Let $\sigma\colon Y\rightarrow X$ contract a $(-1)$-curve $C$, and suppose that a smooth divisor $D\subset Y$ meets $C$ transversely at two distinct points. Then $\Delta=\sigma(D)$ has an ordinary node at $p=\sigma(C)$. We use the locally free logarithmic sheaf at the node; analytically, the local model is $(xy=0)\subset\AA^2$. As an analogue of Theorem~\ref{log-blowup-comparison}, we obtain the following.

\begin{theorem}\label{nodal-contraction}
There are two exact sequences
\begin{equation}\label{nodal-cotangent-transform}
0\to \sigma_{\!*}\Omega_Y^1(\log D)\to \Omega_X^1(\log \Delta) \to \Oo_p\to 0
\end{equation}
and
\begin{equation}\label{nodal-end-transform}
0\to \sigma_{\!*}\mathcal{E}\!nd_0(\Omega_Y^1(\log D))\to \mathcal{E}\!nd_0(\Omega_X^1(\log \Delta))\to \Oo_p\to 0
\end{equation}
with $R^1\!\sigma_{\!*}\mathcal{E}\!nd_0(\Omega_Y^1(\log D))=0$. Consequently, we obtain
\begin{equation}\label{nodal-h1-formula}
\mathrm{h}^1(Y,\mathcal{E}\!nd_0(\Omega_Y^1(\log D)))=\mathrm{h}^1(X,\mathcal{E}\!nd_0(\Omega_X^1(\log \Delta)))+1-\rho_p,
\end{equation}
where $\rho_p\in\{0,1\}$ is the rank of the evaluation map
\[
\mathrm{H}^0(X,\mathcal{E}\!nd_0(\Omega_X^1(\log \Delta)))\to \CC_p
\]
induced by \eqref{nodal-end-transform}. In particular, genuine nonrigidity of $(X,\Delta)$ implies genuine nonrigidity of $(Y,D)$. If $\Omega_X^1(\log \Delta)$ is simple so that $\rho_p=0$, then the nodal resolution creates exactly one new first-order deformation.
\end{theorem}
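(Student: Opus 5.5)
The plan follows the proof of Theorem~\ref{log-blowup-comparison}. I would do a local frame computation at $p$ to identify the two pushforwards, then a formal-functions argument for the vanishing of $R^1$, then the Leray and long exact cohomology sequences to obtain \eqref{nodal-h1-formula}.

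\emph{Local computation.} Choose analytic coordinates $(x,y)$ at $p$ with $\Delta=(xy=0)$. Then $\Omega_X^1(\log\Delta)$ is locally free with frame $\left(\frac{dx}{x},\frac{dy}{y}\right)$. Use the blow-up charts $(x,y)=(u,uv)$ and $(x,y)=(st,s)$. In the first chart $C=(u=0)$, $D=(v=0)$, and $\Omega_Y^1(\log D)$ has frame $\left(du,\frac{dv}{v}\right)$. One has $\frac{dx}{x}=\frac{du}{u}$ and $\frac{dy}{y}=\frac{du}{u}+\frac{dv}{v}$.
\begin{itemize}
\item A form $\alpha\frac{dx}{x}+\beta\frac{dy}{y}$ pulls back to $(\alpha+\beta)\frac{du}{u}+\beta\frac{dv}{v}$. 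It has no pole along $C$ if and only if $(\alpha+\beta)(p)=0$.
\item The second chart gives the same condition by the symmetry $x\leftrightarrow y$.
\item Hence $\sigma_*\Omega_Y^1(\log D)$ is the kernel of $(\alpha,\beta)\mapsto(\alpha+\beta)(p)$, which proves \eqref{nodal-cotangent-transform}.
\end{itemize}
For \eqref{nodal-end-transform}, I would conjugate a trace-free matrix $A=\begin{pmatrix}a&b\\ c&-a\end{pmatrix}$ by the change-of-frame matrix $\begin{pmatrix}u^{-1}&0\\ u^{-1}&1\end{pmatrix}$ and collect the coefficients of $u^{-1}$. I expect this to leave a single linear condition on the values $a(p),b(p),c(p)$, something like $b(p)=c(p)$ up to sign normalization. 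The point to check is that the second chart imposes the same condition rather than an independent one. The two charts are exchanged by $x\leftrightarrow y$, which swaps the frame vectors and hence $b$ and $c$, so a condition symmetric under this swap is consistent. Granting this, the cokernel is $\Oo_p$.

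\emph{Vanishing of $R^1$.} By Proposition~\ref{key-lemma-exceptional} with $n=m=2$, we have $\Tt_Y(-\log D)_{|C}\cong\Oo_{\PP^1}\oplus\Oo_{\PP^1}(-1)$. Hence
\[
\mathcal{E}\!nd_0(\Omega_Y^1(\log D))_{|C}\cong\Oo_{\PP^1}(1)\oplus\Oo_{\PP^1}\oplus\Oo_{\PP^1}(-1).
\]
The conormal bundle of $C$ is $\Oo_{\PP^1}(1)$. Each graded piece $\mathcal{E}\!nd_0(\Omega_Y^1(\log D))_{|C}\otimes\Oo_{\PP^1}(k)$ with $k\geq0$ therefore has vanishing $\mathrm{H}^1$. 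Induction on the infinitesimal neighborhoods $kC$ gives $\mathrm{H}^1(kC,-)=0$ for all $k$, and the theorem on formal functions gives $R^1\sigma_*\mathcal{E}\!nd_0(\Omega_Y^1(\log D))=0$.

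As a cross-check I would compare Euler characteristics. Blowing up the node replaces $\Delta$ by the smooth $D$; Theorem~\ref{RR} applied on $Y$, together with the corresponding generalized Chern numbers of $\Omega_X^1(\log\Delta)$, should give a difference of exactly $-1$. This matches a length-one cokernel together with $R^1=0$.

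\emph{Cohomology.} Since $R^1$ vanishes, Leray gives $\mathrm{H}^i(Y,\mathcal{E}\!nd_0(\Omega_Y^1(\log D)))\cong\mathrm{H}^i(X,\sigma_*\mathcal{E}\!nd_0(\Omega_Y^1(\log D)))$. The cohomology sequence of \eqref{nodal-end-transform} reads
\[
0\to \mathrm{H}^0(\sigma_*)\to \mathrm{H}^0(X,\mathcal{E}\!nd_0(\Omega_X^1(\log\Delta)))\xrightarrow{\ \mathrm{ev}\ }\CC_p\to \mathrm{H}^1(\sigma_*)\to \mathrm{H}^1(X,\mathcal{E}\!nd_0(\Omega_X^1(\log\Delta)))\to 0,
\]
using $\mathrm{H}^1(\Oo_p)=0$. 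This gives $\mathrm{h}^1(Y)=\mathrm{h}^1(X)+1-\rho_p$. The surjection onto $\mathrm{H}^1(X,\mathcal{E}\!nd_0(\Omega_X^1(\log\Delta)))$ shows that nonrigidity of $(X,\Delta)$ passes to $(Y,D)$. If $\Omega_X^1(\log\Delta)$ is simple, then $\mathrm{H}^0(\mathcal{E}\!nd_0)=0$, so $\rho_p=0$ and exactly one new first-order deformation appears.

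The main obstacle is the explicit conjugation computation. It must produce exactly one condition, and that condition must be independent of the chart; the $x\leftrightarrow y$ symmetry argument above is what I would rely on for this.
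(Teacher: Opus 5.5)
Your proposal is correct and follows the paper's proof step for step: the chart computation in the frames $(dx/x,dy/y)$ and $(du,dv/v)$, the Key Restriction Lemma with $n=m=2$ together with formal functions for $R^1\sigma_*=0$, and then Leray combined with the cohomology sequence of the endomorphism transform. The one correction is to your guessed condition, which is wrong. Writing $A(\alpha)=a\alpha+c\beta$ and $A(\beta)=b\alpha+d\beta$, the only polar term is $(-a+b-c+d)/u$; equivalently, $A(p)$ must preserve the line spanned by $(\alpha-\beta)(p)$. So for trace-free $A$ the condition is $(b-c)(p)=2a(p)$ rather than $b(p)=c(p)$: for instance $\mathrm{diag}(1,-1)$ sends $dv/v$ to $-2\,du/u-dv/v$ and does not extend. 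Since $-a+b-c+d$ only changes sign under $(a,b,c,d)\mapsto(d,c,b,a)$, your $x\leftrightarrow y$ symmetry argument for the second chart still goes through.
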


\begin{proof}
Work analytically at $p$ with coordinates $(x,y)$ such that $\Delta=(xy=0)$. The logarithmic cotangent bundle $\Omega_X^1(\log \Delta)$ has frame $\langle \alpha=\frac{dx}{x},\beta=\frac{dy}{y}\rangle$. On the first blow-up chart write $x=u$, $y=uv$. The strict transform of the boundary in this chart is $(v=0)$, and $\Omega_Y^1(\log D)$ has frame $\langle du,\frac{dv}{v}\rangle$. Since
\[
\alpha=\frac{du}{u} \quad \text{and}\quad \beta=\frac{du}{u}+\frac{dv}{v},
\]
a section $a\alpha+b\beta$ extends across the exceptional curve if and only if $a+b$ vanishes at the origin. The second chart, $x=st$, $y=t$, gives the same condition. Thus we obtain
\[
\sigma_*\Omega_Y^1(\log D)=\{a\alpha+b\beta\mid (a+b)(p)=0\},
\]
which proves \eqref{nodal-cotangent-transform}.

For the endomorphism calculation, write an endomorphism of $\Omega_X^1(\log \Delta)$ in the frame $(\alpha,\beta)$ as $A=\begin{pmatrix}a&b\\c&d\end{pmatrix}$. On the chart $(u,v)$, the change-of-frame matrix from $(\alpha,\beta)$ to $(du,dv/v)$ is $P=\begin{pmatrix}u^{-1}&u^{-1}\\0&1\end{pmatrix}$. The only possibly singular entry of $PAP^{-1}$ is $(-a+b-c+d)/u$. The second blow-up chart gives the identical condition. Hence $A$ extends to an endomorphism of $\Omega_Y^1(\log D)$ precisely when $(-a+b-c+d)(p)=0$. After imposing $d=-a$, this remains one nonzero linear condition on trace-free endomorphisms. This proves \eqref{nodal-end-transform}.

Proposition~\ref{key-lemma-exceptional}, with $n=m=2$, gives $\Tt_Y(-\log D)_{|C}\cong \Oo_C\oplus\Oo_C(-1)$. Therefore
\[
\mathcal{E}\!nd_0(\Omega_Y^1(\log D))_{|C}\cong\Oo_C\oplus\Oo_C(1)\oplus\Oo_C(-1).
\]
All these summands, and all their twists by the successive conormal quotients $\Oo_C(k)$ for $k\geq0$, have vanishing first cohomology. The theorem on formal functions therefore gives vanishing $R^1\sigma_*\mathcal{E}\!nd_0(\Omega_Y^1(\log D))=0$.

Leray now identifies
\[
\mathrm{H}^1(Y,\mathcal{E}\!nd_0(\Omega_Y^1(\log D)))\simeq \mathrm{H}^1(X,\sigma_*\mathcal{E}\!nd_0(\Omega_Y^1(\log D))).
\]
Taking cohomology in \eqref{nodal-end-transform} proves \eqref{nodal-h1-formula}. If $\Omega_X^1(\log \Delta)$ is simple, then $\mathrm{H}^0(X,\mathcal{E}\!nd_0(\Omega_X^1(\log \Delta)))=0$, so $\rho_p=0$.
\end{proof}

\begin{corollary}\label{nodal-rigidity}
In the nodal contraction setting, the resolved smooth pair $(Y,D)$ is genuinely rigid if and only if the nodal logarithmic bundle $\Omega_X^1(\log \Delta)$ is genuinely rigid and the evaluation map
\[
\mathrm{H}^0(X,\mathcal{E}\!nd_0(\Omega_X^1(\log \Delta)))\to \CC_p
\]
is surjective. In particular, a simple nodal logarithmic bundle never gives a rigid resolved pair.
\end{corollary}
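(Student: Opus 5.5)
The statement is a direct consequence of the formula \eqref{nodal-h1-formula} in Theorem~\ref{nodal-contraction}, so the plan is to read off both directions from it. Recall that \emph{genuinely rigid} means vanishing of $\mathrm{H}^1$ of the trace-free endomorphism sheaf. Applied to $(Y,D)$, the formula gives
\[
\mathrm{h}^1\bigl(Y,\mathcal{E}\!nd_0(\Omega_Y^1(\log D))\bigr)=\mathrm{h}^1\bigl(X,\mathcal{E}\!nd_0(\Omega_X^1(\log \Delta))\bigr)+(1-\rho_p).
\]
Here $\rho_p\in\{0,1\}$, so $1-\rho_p\geq0$. Both summands on the right are therefore nonnegative integers, and the left side vanishes if and only if each summand does.

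First, the vanishing of the first summand is by definition genuine rigidity of the nodal logarithmic bundle $\Omega_X^1(\log\Delta)$. Since a bundle and its dual have the same endomorphism bundle, this is the same as rigidity of its dual, as already used in Section~\ref{blow-up}. Second, $1-\rho_p=0$ means $\rho_p=1$. Because the target $\CC_p$ is one-dimensional, $\rho_p=1$ is exactly surjectivity of the evaluation map $\mathrm{H}^0(X,\mathcal{E}\!nd_0(\Omega_X^1(\log\Delta)))\to\CC_p$ induced by \eqref{nodal-end-transform}. Combining the two gives the equivalence.

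For the final sentence: if $\Omega_X^1(\log\Delta)$ is simple, then $\mathrm{H}^0(X,\mathcal{E}\!nd_0(\Omega_X^1(\log\Delta)))=0$. The evaluation map then has rank $0$ and cannot be surjective, so $(Y,D)$ is not rigid. Equivalently, Theorem~\ref{nodal-contraction} gives $\mathrm{h}^1\geq1$ upstairs.

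There is no real obstacle, since all the local work (the length-one quotient in \eqref{nodal-end-transform} and $R^1\sigma_*=0$) was done in Theorem~\ref{nodal-contraction}. The one point to state explicitly is why the evaluation map's rank is at most one: its target is $\CC_p=Q_p$, which has length one by \eqref{nodal-end-transform}.
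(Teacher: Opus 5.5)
Your proposal is correct and follows the paper's proof essentially verbatim. Both arguments read the equivalence off \eqref{nodal-h1-formula} by noting that $\rho_p\in\{0,1\}$ because the target $\CC_p$ is one-dimensional. Hence the left side vanishes exactly when $\mathrm{h}^1(X,\mathcal{E}\!nd_0(\Omega_X^1(\log\Delta)))=0$ and $\rho_p=1$, and for a simple bundle $\mathrm{H}^0(X,\mathcal{E}\!nd_0(\Omega_X^1(\log\Delta)))=0$ forces $\rho_p=0$, giving $\mathrm{h}^1\geq1$ on $Y$.
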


\begin{proof}
Formula \eqref{nodal-h1-formula} reads
\[
\mathrm{h}^1(Y,\mathcal{E}\!nd_0(\Omega_Y^1(\log D)))=\mathrm{h}^1(X,\mathcal{E}\!nd_0(\Omega_X^1(\log \Delta)))+1-\rho_p,
\]
where $\rho_p$ is the rank of a linear map to the one-dimensional vector space $\CC_p$. Hence $\rho_p$ is either zero or one. The left side vanishes exactly when the first term on the right vanishes and $\rho_p=1$, which is equivalent to surjectivity of the evaluation map. If $\Omega_X^1(\log \Delta)$ is simple, then $H^0(X,\End_0(\Omega_X^1(\log \Delta)))=0$, so the evaluation map has rank zero and the formula gives 
\[
\mathrm{h}^1(Y,\mathcal{E}\!nd_0(\Omega_Y^1(\log D)))=\mathrm{h}^1(X,\mathcal{E}\!nd_0(\Omega_X^1(\log \Delta)))+1>0.\qedhere
\]
\end{proof}


\subsection{The ordinary triple-point contraction}

We next consider $n=m=3$. In contrast with the nodal case, the endomorphism bundle is unchanged by resolving an ordinary triple point.

\begin{theorem}\label{triple-contraction}
Let $\sigma\colon Y\rightarrow X$ contract a $(-1)$-curve $C\cong \PP^1$. Suppose that a smooth divisor $D\subset Y$ meets $C$ transversely at three distinct points and that $\Delta=\sigma(D)$ has an ordinary triple point at $p=\sigma(C)$. Then we obtain
\begin{equation}\label{triple-bundle-transform}
\Omega_Y^1(\log D)\cong \sigma^*\Omega_X^1(\log \Delta)\otimes\Oo_Y(-C).
\end{equation}
Consequently, we have
\begin{equation}\label{triple}
\begin{cases}
~~\mathcal{E}\!nd_0(\Omega_Y^1(\log D))\cong \sigma^*\mathcal{E}\!nd_0(\Omega_X^1(\log \Delta));\\
~~\sigma_{\!*}\mathcal{E}\!nd_0(\Omega_Y^1(\log D))\cong \mathcal{E}\!nd_0(\Omega_X^1(\log \Delta));\\
~~R^1\!\sigma_{\!*}\mathcal{E}\!nd_0(\Omega_Y^1(\log D))=0.
\end{cases}
\end{equation}
In particular, we obtain
\begin{equation}\label{triple-h1-invariance}
\mathrm{H}^1(Y,\mathcal{E}\!nd_0(\Omega_Y^1(\log D)))\cong  \mathrm{H}^1(X,\mathcal{E}\!nd_0(\Omega_X^1(\log \Delta))),
\end{equation}
and thus resolving an ordinary triple point preserves genuine infinitesimal rigidity and nonrigidity.
\end{theorem}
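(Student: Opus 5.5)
The plan is to establish \eqref{triple-bundle-transform} by a local computation in two blow-up charts, in the same way as the proof of Theorem~\ref{nodal-contraction}. Everything in \eqref{triple} then follows formally, using the projection formula and the theorem on formal functions.

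\emph{Step 1: the local comparison.} Both sides of \eqref{triple-bundle-transform} are locally free of rank two on $Y$. Away from $C$ they agree, since $\sigma$ is an isomorphism there and $\Oo_Y(-C)$ is trivial. So it suffices to show that the natural map $\sigma^*\Omega_X^1(\log\Delta)\to\Omega_Y^1(\log D)$ factors as an isomorphism onto $\Omega_Y^1(\log D)$ after twisting by $\Oo_Y(-C)$, or equivalently that
\[
\sigma^*\Omega_X^1(\log\Delta)\otimes\Oo_Y(-C)\cong\Omega_Y^1(\log D)
\]
near $C$. Work analytically at $p$ with coordinates in which $\Delta=(xy(x-y)=0)$; the general ordinary triple point $xy(x-\lambda y)$ behaves identically. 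Use the locally free frame of $\Omega_X^1(\log\Delta)$ given by the Saito-type forms
\[
\omega_1=\frac{d(xy(x-y))}{xy(x-y)},\qquad \omega_2=\frac{x\,dy-y\,dx}{xy(x-y)}.
\]
The Saito criterion applies because $\omega_1\wedge\omega_2=\pm\,dx\wedge dy/(xy(x-y))$ up to a unit.

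In the chart $x=u$, $y=uv$, the strict transform is $v(1-v)=0$. A direct substitution gives $\omega_1=3\,du/u+\theta$, where $\theta=\frac{dv}{v}+\frac{d(1-v)}{1-v}$, and $\omega_2=\frac{dv}{u\,v(1-v)}$. Hence $u\omega_1$ and $u\omega_2$ form a frame of $\Omega_Y^1(\log D)=\langle du,\ dv/(v(1-v))\rangle$. Indeed, $u\omega_1=3\,du+u\theta$, and $\theta$ lies in the span of $dv/(v(1-v))$, while $u\omega_2=dv/(v(1-v))$. The second chart $x=st$, $y=t$ is treated symmetrically. Since $u$ and $t$ are local equations of $C$, this proves \eqref{triple-bundle-transform}. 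The conceptual reason is that the pole order $3$ equals $m=n$, which exactly cancels the $\Oo(-C)$ twist. This is consistent with Proposition~\ref{key-lemma-exceptional}, which gives $\Omega_Y^1(\log D)_{|C}\cong\Oo_{\PP^1}(1)^{\oplus2}$, a twist of a trivial bundle.

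\emph{Step 2: endomorphisms and direct images.} Twisting by a line bundle does not change $\mathcal{E}\!nd_0$, and $\mathcal{E}\!nd_0$ commutes with pullback of locally free sheaves. Together these give the first line of \eqref{triple}. The projection formula and $\sigma_*\Oo_Y=\Oo_X$ give $\sigma_*\sigma^*\mathcal{E}\!nd_0(\Omega_X^1(\log\Delta))\cong\mathcal{E}\!nd_0(\Omega_X^1(\log\Delta))$. The same formula together with $R^1\sigma_*\Oo_Y=0$ (the contraction is a smooth point blow-up, so this is standard, or follows from formal functions since $\Oo_C(k)$ has no $\mathrm{H}^1$ for $k\ge0$) gives $R^1\sigma_*=0$. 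The Leray spectral sequence then collapses, and we obtain \eqref{triple-h1-invariance}.

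\emph{Main obstacle.} The only real work is Step 1. The hard part is choosing the frame of the singular-boundary logarithmic sheaf so that it is visibly locally free at an ordinary triple point, and then checking that $\sigma^*$ of it has exactly a simple pole along $C$ in both charts, with no extra degeneracy. I would first verify the claim through the restriction $\Oo_{\PP^1}(1)^{\oplus2}$ from Proposition~\ref{key-lemma-exceptional}. I would then carry out the explicit Saito-frame computation above, citing the description of generalized logarithmic sheaves in \cite{HMPV} for local freeness at the ordinary triple point.
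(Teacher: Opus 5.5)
Your proposal is correct and follows essentially the paper's argument. The paper does the same two-chart computation on the tangent side, lifting the Euler and Hamiltonian derivations $\delta_1,\delta_2$ (each lift acquires exactly one factor of $u$) and then dualizing; your forms $df/f$ and $(x\,dy-y\,dx)/f$ are exactly the dual frame up to the constants $3$ and $-3$, and your projection-formula and Leray steps coincide with the paper's. One cosmetic point: there is no natural map $\sigma^*\Omega_X^1(\log\Delta)\to\Omega_Y^1(\log D)$, since the pulled-back forms have a simple pole along $C$. What your chart computation actually proves is the equality of subsheaves $\Oo_Y(-C)\cdot\sigma^*\Omega_X^1(\log\Delta)=\Omega_Y^1(\log D)$ of meromorphic $1$-forms, and that is the clean way to state it.
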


\begin{proof}
The assertion is local at $p$. Choose analytic coordinates so that the three tangent directions are represented by $f=xy(x-y)=0$. The module of logarithmic derivations is free, with basis
\[
\delta_1=x\frac{\partial}{\partial x}+y\frac{\partial}{\partial y},\qquad \delta_2=f_y\frac{\partial}{\partial x}-f_x\frac{\partial}{\partial y}.
\]
Indeed, $\delta_1(f)=3f$, $\delta_2(f)=0$, and the determinant of the coefficient matrix is $-3f$.

On the blow-up chart $x=u$, $y=uv$, the strict transform is $v(1-v)=0$. The lifted logarithmic derivations are
\[
\widetilde\delta_1=u\frac{\partial}{\partial u}, \qquad 
\widetilde\delta_2=u\left( u(1-2v)\frac{\partial}{\partial u}-3v(1-v)\frac{\partial}{\partial v} \right).
\]
Both contain exactly one factor of the local equation $u$ of the exceptional curve. After removing this common factor, the two vector fields form a frame of $\Tt_Y(-\log D)$: their determinant is a unit times $v(1-v)$, the reduced equation of the strict transform. The same calculation on the second blow-up chart gives
\[
\Tt_Y(-\log D)\cong \sigma^*\Tt_X(-\log\Delta)\otimes\Oo_Y(C).
\]
Dualizing proves \eqref{triple-bundle-transform}. The line-bundle twist cancels in the endomorphism bundle, giving the first of \eqref{triple}. Since
\[
\sigma_{\!*}\Oo_Y=\Oo_X, \qquad R^1\sigma_{\!*}\Oo_Y=0,
\]
the projection formula proves the rest of \eqref{triple}, and Leray then gives \eqref{triple-h1-invariance}. Note that the Key Restriction Lemma gives
\[
\Tt_Y(-\log D)_{|C}\cong \Oo_C(-1)^{\oplus2},
\]
which is also the restriction of $\sigma^*\Tt_X(-\log\Delta)\otimes \Oo_Y(C)$ to $C$.
\end{proof}

\subsection{The ordinary quadruple-point contraction}

The case $n=m=4$ is again an elementary modification, although its local origin differs from the nodal case.

\begin{theorem}\label{quadruple-contraction}
Let $\sigma\colon Y\rightarrow X$ contract a $(-1)$-curve $C\cong \PP^1$. Suppose that a smooth divisor $D\subset Y$ meets $C$ transversely at four distinct points and that $\Delta=\sigma(D)$ has an ordinary quadruple point at $p=\sigma(C)$. Then we have an exact sequence
\begin{equation}\label{quadruple-end-transform}
0\to \sigma_*\mathcal{E}\!nd_0(\Tt_Y(-\log D)) \to \mathcal{E}\!nd_0(\Tt_X(-\log \Delta))\to \Oo_p\to 0
\end{equation}
and $R^1\sigma_*\mathcal{E}\!nd_0(\Tt_Y(-\log D))=0$. Consequently,
\begin{equation}\label{quadruple-h1-formula}
\mathrm{h}^1(Y,\mathcal{E}\!nd_0(\Tt_Y(-\log D))) =\mathrm{h}^1(X,\mathcal{E}\!nd_0(\Tt_X(-\log \Delta)))+1-\rho_p^{(4)},
\end{equation}
where $\rho_p^{(4)}\in\{0,1\}$ is the rank of the evaluation map induced by \eqref{quadruple-end-transform}. In particular, genuine nonrigidity of $\Tt_X(-\log \Delta)$ implies genuine nonrigidity of $\Tt_Y(-\log D)$, and a simple logarithmic tangent bundle $\Tt_X(-\log \Delta)$ acquires exactly one new deformation after resolution.
\end{theorem}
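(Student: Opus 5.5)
I would follow the template of the triple-point case (Theorem~\ref{triple-contraction}), combined with the cohomological bookkeeping of the nodal case (Theorem~\ref{nodal-contraction}).

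\textbf{Step 1: local frame at $p$.} The statement is local at $p$. Choose analytic coordinates with $\Delta=(f=0)$, where $f=xy(x-y)(x-\lambda y)$ and $\lambda\neq0,1$. By Saito's criterion the module of logarithmic derivations is free. A basis is the Euler field $\delta_1=x\partial_x+y\partial_y$ together with a derivation $\delta_2=P\partial_x+Q\partial_y$ with homogeneous quadratic coefficients. Since $\delta_1$ has degree $1$ and $\delta_2$ degree $2$, their degrees sum to $3=\deg f-1$, and $\delta_1\wedge\delta_2$ is a nonzero constant multiple of $f$.

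\textbf{Step 2: lifting to the blow-up.} On the chart $x=u$, $y=uv$ one computes
\[
\delta_1=u\partial_u,\qquad \delta_2=u\bigl(uP(1,v)\partial_u+(Q(1,v)-vP(1,v))\partial_v\bigr).
\]
So both lifts are divisible by $u$ exactly once, as in the triple case. The difference from the triple case is global along $C$: the binary form $Q-vP$ has degree $\le 3$, but it must vanish at all four points of $D\cap C$, one of which is $v=\infty$. Consequently the two vector fields $u^{-1}\delta_1$ and $u^{-1}\delta_2$ fail to be a frame of $\Tt_Y(-\log D)$ along $C$. I would make this precise by showing that there is an elementary modification
\[
0\to \sigma^*\Tt_X(-\log\Delta)\otimes\Oo_Y(C)\to \Tt_Y(-\log D)\to \Oo_C(k)\to 0 ,
\]
or the same sequence with the arrow reversed. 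The integer $k$ is pinned down by comparing restrictions to $C$. By the Key Restriction Lemma, $\Tt_Y(-\log D)_{|C}\cong\Oo_{\PP^1}(-2)\oplus\Oo_{\PP^1}(-1)$ by Proposition~\ref{key-lemma-exceptional}. On the other hand, $c_1$ of the twisted pullback restricts to degree $-2$ on $C$.

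\textbf{Step 3: the endomorphism sequence~\eqref{quadruple-end-transform}.} I would do this concretely, as in the nodal proof. Write a trace-free endomorphism in the frame $(\delta_1,\delta_2)$, conjugate it by the change-of-frame matrix to an adapted frame of $\Tt_Y(-\log D)$ on each chart, and collect the polar terms in $u$ (respectively $s$). I expect exactly one independent linear condition on the value at $p$, which gives a cokernel $\Oo_p$. As a check, the length must be consistent with Euler characteristics and Leray, in the same way lengths were forced in Theorem~\ref{log-blowup-comparison}.

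\textbf{Step 4: vanishing of $R^1$.} From the splitting type in Step 2,
\[
\mathcal{E}\!nd_0(\Tt_Y(-\log D))_{|C}\cong\Oo_C\oplus\Oo_C(1)\oplus\Oo_C(-1).
\]
This and all its twists by the conormal powers $\Oo_C(k)$, $k\ge0$, have vanishing $\mathrm{H}^1$. The theorem on formal functions then gives $R^1\sigma_*=0$, exactly as in the nodal case.

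\textbf{Step 5: cohomology.} Leray identifies $\mathrm{H}^1(Y,-)$ with $\mathrm{H}^1(X,\sigma_*-)$. The long exact sequence of~\eqref{quadruple-end-transform}, together with $\mathrm{H}^1(\Oo_p)=0$, yields~\eqref{quadruple-h1-formula}, with $\rho^{(4)}_p$ the rank of the evaluation map $\mathrm{H}^0(X,\mathcal{E}\!nd_0(\Tt_X(-\log\Delta)))\to\CC_p$. The consequences follow as in Corollary~\ref{nodal-rigidity}: if $\mathrm{h}^1$ on $X$ is positive, it stays positive on $Y$. If $\Tt_X(-\log\Delta)$ is simple, then $\mathrm{H}^0(X,\mathcal{E}\!nd_0)=0$, so $\rho^{(4)}_p=0$ and $\mathrm{h}^1$ increases by exactly one.

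\textbf{Main obstacle.} The hard part is Steps 2–3: identifying the correct adapted frame of $\Tt_Y(-\log D)$ near the point where the quadratic lift fails to be a frame, and verifying that the extension condition is exactly one nonzero linear condition on trace-free matrices, rather than zero as in the triple case or three as in the smooth blow-up case. I would first try to settle this using the restriction $\Oo(-2)\oplus\Oo(-1)$ together with the Euler-characteristic bookkeeping, and only then confirm it by the explicit chart computation.
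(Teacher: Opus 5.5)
Your Steps 4 and 5 are correct and agree with the paper. The problem is that the local core of the theorem, namely \eqref{quadruple-end-transform} with cokernel exactly $\Oo_p$, is never established, and the set-up you propose for it is wrong.

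For four concurrent lines the module of logarithmic derivations is free with exponents $(1,3)$, not $(1,2)$. In Saito's criterion the two degrees must add up to $\deg f=4$. A determinant built from a linear and a quadratic field is a cubic, so it cannot be a constant multiple of the quartic $f$. Your own Step 2 computation shows this. If $P\partial_x+Q\partial_y$ with quadratic coefficients were tangent to all four lines, the cubic form $xQ-yP$ would vanish at four points of $\PP^1$, hence identically. The field would then be $L\,\delta_1$ for a linear form $L$. So the ``failure to be a frame'' you detect means your pair is not a basis at all; it is not a geometric feature along $C$. Moreover, if both generators really lifted with a single factor $u$, the conjugation in Step 3 would impose no condition. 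That reproduces the triple-point answer (an isomorphism), not a cokernel $\Oo_p$.

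The missing ingredient is the correct second generator $\delta_2=f_y\partial_x-f_x\partial_y$. It has cubic coefficients, and the Saito determinant of $(\delta_1,\delta_2)$ is $-4f$. On the chart $x=u$, $y=uv$ it lifts as $u^2\bigl(uf_y(1,v)\partial_u-4f(1,v)\partial_v\bigr)$. Hence $\sigma^*\delta_1=ue_1$ and $\sigma^*\delta_2=u^2e_2$, where $e_1=\partial_u$ and $e_2=uf_y(1,v)\partial_u-4f(1,v)\partial_v$ form a frame of $\Tt_Y(-\log D)$ (their determinant is $-4f(1,v)$). Conjugating $\begin{pmatrix}a&b\\c&-a\end{pmatrix}$ by $\mathrm{diag}(u,u^2)$ gives $\begin{pmatrix}a&b/u\\cu&-a\end{pmatrix}$. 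The single extension condition is therefore $b(p)=0$, and the second chart gives the same condition. This is exactly the paper's argument.

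With this frame, the elementary modification you guessed is $0\to\sigma^*\Tt_X(-\log\Delta)\otimes\Oo_Y(C)\to\Tt_Y(-\log D)\to\Oo_C(-2)\to0$. The naive degree comparison $-3-(-2)$ would give the wrong value $k=-1$, because restriction to $C$ is not exact.

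Your Euler-characteristic fallback does not close the gap as stated, because Theorem~\ref{RR} applies only to smooth boundaries. You would also need $c_2(\Tt_X(-\log\Delta))=c_2(X)+K_X\cdot\Delta+\Delta^2-\tau$ with Tjurina number $\tau=9$. With that input, together with $R^1\sigma_*=0$ and the injection $\sigma_*\mathcal{E}\!nd_0(\Tt_Y(-\log D))\hookrightarrow\mathcal{E}\!nd_0(\Tt_X(-\log\Delta))$ given by reflexivity, $\chi$ drops by exactly one. That forces a length-one cokernel, which is a valid alternative route, but your proposal does not supply it.
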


\begin{proof}
Locally, $\Delta$ is the cone over four distinct points of $\PP^1$. Choose a reduced homogeneous equation $f(x,y)=\prod_{i=1}^4\ell_i(x,y)$. As in the triple-point case, a basis of logarithmic derivations is
\[
\delta_1=x\partial_x+y\partial_y, \qquad \delta_2=f_y\partial_x-f_x\partial_y.
\]
On the blow-up chart $x=u$, $y=uv$, the lift of $\delta_1$ contains exactly one factor $u$, whereas the lift of $\delta_2$ contains exactly two factors $u$. If $e_1,e_2$ denote a local frame of $\Tt_Y(-\log D)$, then the pullbacks of the generators have the form
\[
\sigma^*\delta_1=u e_1, \qquad \sigma^*\delta_2=u^2 e_2,
\]
up to multiplication and triangular change of basis by units. Let $A=\begin{pmatrix}a&b\\c&-a\end{pmatrix}$ be a trace-free endomorphism of $\Tt_X(-\log \Delta)$ in the displayed frame. Conjugating by $\mathrm{diag}(u,u^2)$ shows that its only possibly singular entry is $b/u$. The calculation on the second chart shows that extension over $C$ is equivalent to $b\in\mathfrak m_p$. This is one independent residue condition and proves \eqref{quadruple-end-transform}.

The Key Restriction Lemma gives $\Tt_Y(-\log D)_{|C}\cong \Oo_C(-2)\oplus\Oo_C(-1)$, and so we obtain
\[
\mathcal{E}\!nd_0(\Tt_Y(-\log D))_{|C} \cong \Oo_C\oplus\Oo_C(1)\oplus\Oo_C(-1).
\]
After tensoring with every successive conormal quotient $\Oo_C(k)$, $k\geq0$, the first cohomology still vanishes. Formal
functions therefore gives $R^1\sigma_{\!*}\mathcal{E}\!nd_0(\Tt_Y(-\log D))=0$. Cohomology of \eqref{quadruple-end-transform}, followed by Leray, proves \eqref{quadruple-h1-formula}.
\end{proof}

\subsection{A unibranch tangential contraction}

We next examine a smooth branch having contact order $m\geq2$ with the exceptional curve. This is the first case in which the Key Restriction Lemma does not by itself determine the higher direct image of the endomorphism bundle.

\begin{proposition}\label{cusp-local-calculation}
Let $\sigma\colon Y\rightarrow X$ contract a $(-1)$-curve $C\cong \PP^1$, and suppose that a smooth divisor $D\subset Y$ meets $C$ at one point with intersection multiplicity $m\geq3$. Assume, in addition, that the image branch $\Delta=\sigma(D)$ is the quasihomogeneous cusp with local equation
\begin{equation}\label{cusp-model}
y^m-x^{m+1}=0.
\end{equation}
Then we have $\Tt_Y(-\log D)_{|C} \cong \Oo_C(1)\oplus \Oo_C(-m)$, and so 
\begin{equation}\label{cusp-exceptional-h1}
\mathrm{h}^1\bigl(C,\mathcal{E}\!nd_0(\Tt_Y(-\log D)_{|C})\bigr)=m.
\end{equation}
The pullbacks of a basis downstairs have different exceptional orders: in the blow-up chart $x=u$, $y=uv$, they are
\begin{equation}\label{cusp-lift}
\widetilde\delta_1 =mu\partial_u+v\partial_v, \qquad  \widetilde\delta_2=u^{m-2}\left( mu v^{m-1}\partial_u+\bigl((m+1)u-mv^m\bigr)\partial_v \right).
\end{equation}
\end{proposition}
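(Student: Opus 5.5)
The three assertions are independent, and I would prove them in order: the restriction to $C$, then the $h^1$ count, then the lifts.

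For the restriction, apply Proposition~\ref{key-lemma-exceptional} with $n=1$ and $a_1=m$. It gives
\[
0\to\Oo_{\PP^1}(1)\to\Tt_Y(-\log D)_{|C}\to\Oo_{\PP^1}(-m)\to0 .
\]
The extension group is $\mathrm{H}^1(\PP^1,\Oo_{\PP^1}(m+1))=0$, so the sequence splits; this is the inequality $m+4\ge 2n$ of that proposition. Hence $\Tt_Y(-\log D)_{|C}\cong\Oo_C(1)\oplus\Oo_C(-m)$. It follows that
\[
\mathcal{E}\!nd_0(\Tt_Y(-\log D))_{|C}\cong\Oo_C\oplus\Oo_C(m+1)\oplus\Oo_C(-m-1).
\]
Its only first cohomology comes from $\Oo_C(-m-1)$, and $\mathrm{h}^1(\PP^1,\Oo(-m-1))=m$. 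This proves \eqref{cusp-exceptional-h1}.

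For the lifts, I would first write down a free basis of logarithmic derivations for $f=y^m-x^{m+1}$. The curve is quasihomogeneous with weights $(m,m+1)$, so take the Euler field $\delta_1=mx\partial_x+(m+1)y\partial_y$, which satisfies $\delta_1 f=m(m+1)f$. Take also the Hamiltonian field $\delta_2=f_y\partial_x-f_x\partial_y=my^{m-1}\partial_x+(m+1)x^m\partial_y$. The determinant of their coefficient matrix is a nonzero constant multiple of $f$, so Saito's criterion shows they form a basis. Next substitute $x=u$, $y=uv$, using $\partial_x=\partial_u-(v/u)\partial_v$ and $\partial_y=u^{-1}\partial_v$. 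The lift of $\delta_1$ is $mu\partial_u+\bigl(-mv+(m+1)v\bigr)\partial_v=mu\partial_u+v\partial_v$. The lift of $\delta_2$ is
\[
mu^{m-1}v^{m-1}\partial_u+\bigl(-mu^{m-2}v^m+(m+1)u^{m-1}\bigr)\partial_v .
\]
Factoring out $u^{m-2}$ gives exactly \eqref{cusp-lift}.

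I do not expect a genuine obstacle in these steps, since each is a direct substitution. The one point that needs care is showing that the Key Restriction Lemma really applies to this model. Here the strict transform has local equation $v^m=u$, which is smooth and meets $C=(u=0)$ at the single point $v=0$ with multiplicity $m$. This confirms the hypotheses $n=1$ and $a_1=m$. It also shows why the two lifts have different exceptional orders, $0$ and $m-2$, which explains why their bracketed forms do not simply frame $\Tt_Y(-\log D)$ as in the triple-point case.
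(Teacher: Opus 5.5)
Your proposal is correct and follows the paper's proof essentially step for step. The splitting $\Tt_Y(-\log D)_{|C}\cong\Oo_C(1)\oplus\Oo_C(-m)$ comes from Proposition~\ref{key-lemma-exceptional} with $n=1$, giving $\mathcal{E}\!nd_0\cong\Oo_C\oplus\Oo_C(m+1)\oplus\Oo_C(-m-1)$ and $\mathrm{h}^1=m$, and the lifts come from substituting $x=u$, $y=uv$ into the Euler and Hamiltonian derivations; you additionally make explicit the Saito-criterion check (determinant $-m(m+1)f$) and the chain-rule formulas for $\partial_x,\partial_y$, which the paper leaves implicit.
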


\begin{proof}
The model \eqref{cusp-model} is obtained by blowing down the smooth branch whose equation on the blow-up is $u=v^m$. For $f=y^m-x^{m+1}$, logarithmic derivations downstairs have the basis
\[
\delta_1=mx\partial_x+(m+1)y\partial_y, \qquad \delta_2=f_y\partial_x-f_x\partial_y=my^{m-1}\partial_x+(m+1)x^m\partial_y.
\]
Substituting $x=u$, $y=uv$ gives \eqref{cusp-lift}. The Key Restriction Lemma, with $n=1$ and total contact $m$, gives the splitting for $\Tt_Y(-\log D)_{|C}$, which induces
\[
\mathcal{E}\!nd_0(\Tt_Y(-\log D)_{|C})\cong \Oo_C\oplus\Oo_C(m+1)\oplus\Oo_C(-m-1).
\]
So \eqref{cusp-exceptional-h1} follows from $\mathrm{h}^1(\PP^1,\Oo_{\PP^1}(-m-1))=m$.
\end{proof}

The formal-functions step now differs essentially from the transverse cases. If $C^{(k)}$ denotes the $k$-th infinitesimal neighborhood of $C$, the successive conormal quotients introduce the groups $\mathrm{H}^1\bigl(C,\Oo_C(k-m-1)\bigr)$ for $0\leq k\leq m-1$, which do not vanish. Consequently, restriction to $C$ does not determine either the length or the module structure of
\[
R^1\sigma_{\!*}\mathcal{E}\!nd_0(\Tt_Y(-\log D)).
\]
One must compute the transition maps
\[
\mathrm{H}^1\bigl(C^{(k+1)},\mathcal{E}\!nd_0(\Tt_Y(-\log D)_{|{C^{(k+1)}}})\bigr) \to \mathrm{H}^1\bigl(C^{(k)},\mathcal{E}\!nd_0(\Tt_Y(-\log D)_{|{C^{(k)}}})\bigr)
\]
using the explicit lifts \eqref{cusp-lift}. This is the first unresolved local calculation in the contraction program; assigning a length from the splitting type alone would be unjustified.

\begin{problem}\label{cusp-formal-neighborhood}
For the local model $y^m=x^{m+1}$, determine
\[
\sigma_{\!*}\mathcal{E}\!nd_0(\Omega_Y^1(\log D)), \qquad R^1\sigma_{\!*}\mathcal{E}\!nd_0(\Omega_Y^1(\log D)),
\]
including the complete finite-length module structure at the cusp.
\end{problem}

\begin{remark}
The preceding case-by-case calculations do not constitute a classification on nonminimal surfaces. They determine the local comparison for ordinary transverse contractions with two, three, or four branches. Even in those cases, a global rigidity statement still requires the corresponding generalized logarithmic sheaf on the contracted surface to be understood.  

The formal-neighborhood problem above remains open in the present argument for a tangential unibranch contraction. Also not treated are transverse contractions with at least five branches, contractions with both tangencies and several branches,
nonordinary singular image boundaries, and sequences in which successive exceptional configurations interact.  Accordingly, these results are local ingredients for, rather than the completion of, the classification on nonminimal surfaces.
\end{remark}


\end{document}